\documentclass[12pt,a4paper]{article}
\usepackage{amsfonts}

\usepackage{mathrsfs}
\usepackage{xcolor}

\usepackage{stmaryrd}
\usepackage{amsmath}
\usepackage{amssymb}

\usepackage{bbm}
\usepackage{mathrsfs}
\usepackage{graphicx}
\usepackage{lipsum}

\usepackage{enumerate}
\usepackage{theorem}
\usepackage{indentfirst}
\makeatletter
\def\tank#1{\protected@xdef\@thanks{\@thanks
        \protect\footnotetext[0]{#1}}}
\def\bigfoot{

    \@footnotetext}
\makeatother

\topmargin=-1cm
\oddsidemargin=-1mm
\evensidemargin=-1mm
\textwidth=165mm
\textheight=250mm

\newcommand{\ea}{\end{array}}

\newtheorem{theorem}{Theorem}[section]
\newtheorem{hypothesis}{Hypothesis}[section]
\newtheorem{proposition}{Proposition}[section]

\newtheorem{lemma}{Lemma}[section]
\newtheorem{definition}{Definition}[section]
\newtheorem{Rem}{Remark}[section]

{\theorembodyfont{\rmfamily}
}

\newenvironment{proof}{Proof.}

\def\EE{\mathbb{E}}

\def\vare{{\varepsilon}}
\def \eref#1{\hbox{(\ref{#1})}}

\def\EE{\mathbb{ E}}

\setcounter{equation}{0}

\def\dt{d t}

\begin{document}
\title{{\Large \bf Strong Convergence Rates in Averaging Principle for Slow-Fast McKean-Vlasov SPDEs }\footnote{E-mail addresses: weihong@tju.edu.cn~(W.~H.), shihuli@jsnu.edu.cn~(S.~L.), weiliu@jsnu.edu.cn~(W. L.)}}

\author{{Wei Hong$^{a}$},~~{Shihu Li$^{b}$}{\footnote{Corresponding author}},~~{Wei Liu$^{b,c}$}
\\
 \small $a.$ Center for Applied Mathematics, Tianjin University, Tianjin 300072, China \\
 \small $b.$ School of Mathematics and Statistics, Jiangsu Normal University, Xuzhou 221116, China\\
  \small $c.$ Research Institute of Mathematical Sciences, Jiangsu Normal University, Xuzhou 221116, China }
\date{}
\maketitle
\begin{center}
\begin{minipage}{145mm}
{\bf Abstract.} In this paper, we aim to study the asymptotic behaviour for a class of McKean-Vlasov stochastic partial differential equations with slow and fast time-scales. Using the variational approach and classical Khasminskii time discretization, we show that the slow component strongly converges to the solution of the associated averaged equation. In particular, the corresponding convergence rates are also obtained. The main
results can be applied to demonstrate the averaging principle for various McKean-Vlasov nonlinear SPDEs such as stochastic porous media type equation, stochastic $p$-Laplace type equation and also some McKean-Vlasov stochastic differential equations.

\vspace{3mm} {\bf Keywords:}~SPDE;~Distribution dependence;~Averaging principle;~Convergence rate;~Porous media equation;~$p$-Laplace equation.

\noindent {\bf AMS Subject Classification:} {60H15; 35R60; 70K70}

\end{minipage}
\end{center}

\section{Introduction}
The McKean-Vlasov SDEs, also called mean-field SDEs or distribution dependent SDEs, have attracted much attention  in recent years, which was initiated by McKean \cite{M}. Roughly speaking, these are SDEs where their coefficients also depend on the distribution of solutions.
This type of models can be used to characterize the limiting behaviors of $N$-interacting particle systems of mean-field type while $N$ goes to infinity (also called propagation of chaos), one can see \cite{M1} for more background on this topic.
The main motivation for studying the McKean-Vlasov SDEs is due to its wide applications since the evolution of stochastic systems often rely on both the microcosmic position and the macrocosmic distribution of the particles.
Furthermore, the McKean-Vlasov SDEs also have some intrinsic link with the nonlinear Fokker-Planck-Kolmogorov equations (cf.~\cite{BR1,HRW}). More precisely, the corresponding distribution density (denoted by $\rho_t$) of solutions to McKean-Vlasov SDEs solves the following PDE
$$\partial_t\rho_t=L^*\rho_t,~~t\geq 0,$$
where $L$ is a second order differential operator and $L^*$ denotes its adjoint operator.

McKean-Vlasov S(P)DEs  have been extensively investigated in recent years.~For instance, Wang \cite{W1} proved the strong and weak existence and uniqueness of solutions to McKean-Vlasov monotone SDEs, and also studied the corresponding exponential ergodicity and Harnack type inequality under some strongly dissipative conditions, which are applicable to e.g. the homogeneous Landau equations. After that, Zhang \cite{Z} investigated the weak solutions of McKean-Vlasov SDEs with singular coefficients, which can be used to characterize the existence of weak solutions to 2D Navier-Stokes equations
with measure as initial vorticity.
Recently, the authors \cite{HL} used the generalized variational framework  to study the existence of unique strong solution for a class of distribution dependent stochastic porous media equation. Barbu and R\"{o}ckner \cite{BR1} also used the nonlinear Fokker-Planck equations to investigate some McKean-Vlasov SDEs. We refer the interested reader to \cite{BR2,BLPR,H,HRZ,RTW} and references therein for more recent results on this topic.

In this paper, we will consider the following slow-fast McKean-Vlasov  stochastic partial differential equations
\begin{equation}\label{e00}
\left\{ \begin{aligned}
&dX^{\varepsilon}_t=\left[A_1(X^{\varepsilon}_t,\mathscr{L}_{X^{\varepsilon}_t})
+f(X^{\varepsilon}_t,\mathscr{L}_{X^{\varepsilon}_t},Y^{\varepsilon}_t)\right]dt
+B_1(X^{\varepsilon}_t,\mathscr{L}_{X^{\varepsilon}_t})dW^1_t,\\
&dY^{\varepsilon}_t=\frac{1}{\varepsilon}A_2(X^{\varepsilon}_t,\mathscr{L}_{X^{\varepsilon}_t},Y^{\varepsilon}_t)dt
+\frac{1}{\sqrt{\varepsilon}}B_2(X^{\varepsilon}_t,\mathscr{L}_{X^{\varepsilon}_t},Y^{\varepsilon}_t)d W^{2}_{t},
\\&X^{\varepsilon}_0=x, Y^{\varepsilon}_0=y,
\end{aligned} \right.
\end{equation}
where $\{W^{i}_t\}_{t\in [0,T]}$, $i=1,2$, are independent cylindrical Wiener processes defined on a complete filtered probability space $\left(\Omega,\mathscr{F},\mathscr{F}_{t\geq0},\mathbb{P}\right)$, $\mathscr{L}_{X^{\varepsilon}_t}$ denotes the law of $X^{\varepsilon}_t$, $\varepsilon$ is a small and
positive parameter describing the ratio of time-scale between processes $X^{\varepsilon}_t$ and $Y^{\varepsilon}_t$. With this time-scale, the variable $X^{\varepsilon}_t$ is referred to as the slow component and $Y^{\varepsilon}_t$ is referred to as the fast component. Multiscale systems are very common in many fields of sciences, like material sciences,
fluids dynamics, climate dynamics, etc. For example, dynamics of chemical reaction networks often take place on notably different time-scales,  from the order of nanoseconds to the order of several days,  the reader can see \cite{BR,WE,HKW,MCCTB} and the references therein for more precise background and applications.

One natural question is what will happen to the solution of the system \eref{e00} as $\varepsilon \rightarrow0$? This question arises naturally from both physical and mathematical standpoints. Averaging principle is a powerful tool for some qualitative analysis of stochastic dynamical systems with different time-scales. The averaging principle for stochastic dynamical systems  with fast
and slow time-scales can be viewed as a law of large numbers, in the cases where a slow
component is driven by an equation with coefficients depending on a fast component, which is an
ergodic stochastic process: when the separation of time-scales goes to infinity, the slow component
converges to the solution of an averaged equation whose coefficients have been averaged out with
respect to some invariant probability distribution for the fast component.

Apart from the above motivations, the averaging principle itself is also theoretically interesting, which
has been studied a lot in the literatures. The averaging principle for dynamical systems with different time-scales  was first studied by
Bogoliubov and Mitropolsky \cite{BM} for
the deterministic systems, afterwards Khasminskii \cite{K1} developed the averaging principles for stochastic dynamical systems, see e.g.~\cite{G,HL1,L1} for further generalizations on different types of SDEs.
Recently, the averaging principles for SPDEs  have also been intensively investigated in the literature.
For example, Dong et al.~\cite{DSXZ} studied the strong and weak averaging principle for stochastic Burgers equations,
Br\'{e}hier \cite{Br1,Br2} gave the strong and weak orders in averaging for stochastic evolution equation of parabolic type with slow and fast time-scales.
The averaging principle for the nonautonomous slow-fast systems of stochastic reaction-diffusion equations was considered in \cite{CL}.
Moreover, Liu et al.~\cite{LRSX1} also established the strong averaging principle for a class of SPDEs with locally monotone coefficients.
For more results on this subject, we refer to \cite{BSYY,BYY,CF,FWL,FWLL,GP2,GP3,PXW,RX2,WR12,WRD12,XPW} and the references therein.

However, to the best of our knowledge, there is no result concerning the averaging principle for McKean-Vlasov type SPDEs in the literature so far. Recently, based on the techniques of time discretization and Poisson equation, R\"{o}ckner et al.~\cite{RSX} established the strong convergence rates of averaging principle for McKean-Vlasov SDEs with global Lipschitz coefficients. Bezemek and Spiliopoulos \cite{BS} also studied the large deviations principle for interacting particle systems of diffusion type in multiscale environments. Note that the above results are  for the finite dimensional SDE case. In this paper, we aim to study
the strong  averaging principle for a class of McKean-Vlasov (nonlinear) SPDEs with slow and fast time-scales. More precisely,
under some appropriate assumptions, we shall prove that
\begin{equation}\label{1.2}
\mathbb{E} \left( \sup_{t\in[0,T]} \|X^{\varepsilon}_t-\bar{X}_t \|_{H_1}^{2} \right)\leq C\varepsilon^{1/3}\rightarrow0,~~~\text{as}~\varepsilon\rightarrow0,
\end{equation}
where $\bar{X}_t$ is the solution of the averaged equation (see equation \eref{1.3} below). In particular, the corresponding convergence rate of (\ref{1.2}) is also derived, which is very important in some applications. For instance, the rate
of convergence is crucial for the analysis of numerical schemes used to approximate the slow
component $X^{\varepsilon}$.

In the distribution-independent case, the convergence rates for two-time-scale SDEs have been studied in some works, see e.g.~\cite{G,LD,RX2,RSX1} and the references therein. Note that there are only few results concerning the strong convergence rates for SPDEs in the literature. Fu et al.~\cite{FWL}  established the convergence rate of order $1/4$ for a class of stochastic hyperbolic-parabolic equations. Dong et al.~\cite{DSXZ} also studied the strong convergence of stochastic Burgers equations with some Logarithmic convergence order. An important development concerning strong convergence rate for SPDEs was established by Br\'{e}hier \cite{Br2} with the convergence rate of order $1/2$, which
is the optimal order of strong convergence in general. However, most papers in the literature investigated strong convergence rate using the mild solution approach, which is
only applicable to some semilinear SPDEs. In this paper, we establish the convergence rate of order $1/6$ for a class of Mckean-Vlasov quasilinear SPDEs.  In \cite{Br2}, to obtain the optimal convergence order, some fairly strong conditions  such as the regularity of second and higher order derivatives of the coefficients and more regular initial value are assumed. The convergence rate obtained here might not be optimal, since we only assume the coefficients satisfy some monotonicity and coercivity conditions, which is in general much weaker than the assumptions in \cite{Br2}.
As examples, our main results are applicable to some Mckean-Vlasov quasilinear SPDEs such as distribution dependent stochastic porous media type equations, stochastic $p$-Laplace type equations, which are also new in the distribution-independent case.

It should be mentioned that this is the first averaging principle result for two-time-scale McKean-Vlasov
(nonlinear) SPDEs. In addition, We also remark that there are some merits to analyze nonlinear operators (even linear operators) on a Gelfand triple replacing a single space, which helps us to deal with the McKean-Vlasov type SPDEs with nonlinear terms (cf. e.g. \cite{EO,KR}). Since the well-posedness of the two-time-scale McKean-Vlasov SPDEs \eref{e00} is not covered by the classical theory of monotone SPDEs (\cite{LR1}) and the McKean-Vlasov case (\cite{H,HL}),
based on the technique of Galerkin type approximation and monotonicity arguments, we first prove the existence and uniqueness of variational solutions for the two-time-scale McKean-Vlasov SPDEs. Then we aim to investigate the strong averaging principle for this type of models. The proof here is mainly inspired by the well-known time discretization method, which was first developed by Khasminskii in \cite{K1} for finite dimensional dynamical systems under random influences.
We need to point out that compared with the results of McKean-Vlasov SDEs in \cite{RSX}, in order to cover some infinite dimensional nonlinear SPDE models, we now consider the system in two Gelfand triples, thus we have to derive some apriori estimates of solutions involving different spaces and overcome some non-trivial difficulties caused by the nonlinear terms, which is quite different to the finite dimensional case.

The remainder of this manuscript is organized as follows. In section 2, we construct the variational framework for a class of McKean-Vlasov SPDEs and give the main results of the present paper. In section 3, we show the existence and uniqueness of solutions to the system \eref{e00}. In section 4, we devote to proving the averaging principle for the system \eref{e00}, and in section 5 some concrete McKean-Vlasov SPDE models are given to illustrate the applications of the main results.

\section{Main Results}
\setcounter{equation}{0}
 \setcounter{definition}{0}

Let us denote by $(U_i,\langle\cdot,\cdot\rangle_{U_i})$ and $(H_i, \langle\cdot,\cdot\rangle_{H_i})$, $i=1,2$, some separable Hilbert spaces, and $H_i^*$ the dual space of $H_i$. Let $V_i$, $i=1,2$, denote the reflexive Banach spaces such that the embedding $V_i\subset H_i$ is continuous and dense. We identify $H_i$ with its dual space according to the Riesz isomorphism, which gives the following Gelfand triples
$$V_i\subset H_i(\cong H_i^*)\subset V_i^*.$$
The dualization between spaces $V_i$ and $V_i^*$ is denoted by $_{V_i^*}\langle\cdot,\cdot\rangle_{V_i}$. It is obvious that $$_{V_i^*}\langle\cdot,\cdot\rangle_{V_i}|_{{H_i}\times{V_i}}=\langle\cdot,\cdot\rangle_{H_i},~i=1,2.$$
Let $L_2(U_i,H_i)$ be the space of all Hilbert-Schmidt operators from $U_i$ to $H_i$.

Denote by $\mathscr{P}(H_1)$ the space of all probability measures on $H_1$ equipped with the weak topology. Now we define
$$\mathscr{P}_2(H_1):=\Big\{\mu\in\mathscr{P}(H_1):\mu(\|\cdot\|_{H_1}^2):=
\int_{H_1}\|\xi\|_{H_1}^2\mu(d\xi)<\infty\Big\}.$$
Then $\mathscr{P}_2(H_1)$ is a Polish space under the following $L^2$-Wasserstein metric
$$\mathbb{W}_{2,H_1}(\mu,\nu):=\inf_{\pi\in\mathscr{C}(\mu,\nu)}\Big
(\int_{H_1\times H_1}\|\xi-\eta\|_{H_1}^2\pi(d\xi,d\eta)\Big)^{\frac{1}{2}},
~\mu,\nu\in\mathscr{P}_2(H_1),$$
here $\mathscr{C}(\mu,\nu)$ stands for the set of all couplings for the measures $\mu$ and $\nu$, i.e., $\pi\in\mathscr{C}(\mu,\nu)$ is a probability measure on $H_1\times H_1$ such that $\pi(\cdot\times H_1)=\mu$ and $\pi(H_1\times \cdot)=\nu$.

For some measurable maps
$$
A_1:V_1\times\mathscr{P}_2(H_1)\rightarrow V_1^*,~~f:H_1\times\mathscr{P}_2(H_1)\times H_2\to H_1,~~B_1:V_1\times\mathscr{P}_2(H_1)\to L_2(U_1,H_1),
$$
and
$$
A_2:H_1\times\mathscr{P}_2(H_1)\times V_2\rightarrow V_1^*,~~B_2:H_1\times\mathscr{P}_2(H_1)\times V_2\to L_2(U_2,H_2),
$$
we consider the following two-time-scale McKean-Vlasov SPDEs
\begin{equation}\label{e1}
\left\{ \begin{aligned}
&dX^{\varepsilon}_t=\left[A_1(X^{\varepsilon}_t,\mathscr{L}_{X^{\varepsilon}_t})
+f(X^{\varepsilon}_t,\mathscr{L}_{X^{\varepsilon}_t},Y^{\varepsilon}_t)\right]dt
+B_1(X^{\varepsilon}_t,\mathscr{L}_{X^{\varepsilon}_t})dW^1_t,\\
&dY^{\varepsilon}_t=\frac{1}{\varepsilon}A_2(X^{\varepsilon}_t,\mathscr{L}_{X^{\varepsilon}_t},Y^{\varepsilon}_t)dt
+\frac{1}{\sqrt{\varepsilon}}B_2(X^{\varepsilon}_t,\mathscr{L}_{X^{\varepsilon}_t},Y^{\varepsilon}_t)d W^{2}_{t},
\\&X^{\varepsilon}_0=x,~Y^{\varepsilon}_0=y,
\end{aligned} \right.
\end{equation}
where $\{W^{i}_t\}_{t\in [0,T]}$, $i=1,2$, are $U_i$-valued independent cylindrical Wiener process defined on a complete filtered probability space $\left(\Omega,\mathscr{F},\mathscr{F}_{t\geq0},\mathbb{P}\right)$, initial values $x,y$ belong to $H_1,H_2$ respectively.

We first assume that the coefficients in \eref{e1} satisfy the following two hypothesises.
\begin{hypothesis}\label{h1}
There are some constants $\alpha\geq2$, $\theta>0$ and $c_1>0$ such that for all $u,v\in V_1$, $u_1,u_2\in H_1$, $v_1,v_2\in H_2$ and $\mu,\nu\in\mathscr{P}_2(H_1)$ we have
\begin{enumerate}
\item [$({\mathbf{A}}{\mathbf{1}})$](Demicontinuity) The map
\begin{eqnarray*}
V_1\times\mathscr{P}_2(H_1)\ni(u,\mu)\mapsto{_{V_1^*}\langle A_1(t,u,\mu),v\rangle_{V_1}}
\end{eqnarray*}
is continuous.

\item [$({\mathbf{A}}{\mathbf{2}})$](Monotonicity and Lipschitz)
\begin{eqnarray}\label{m}
_{{V_1}^*}\langle A_1(u,\mu)-A_1(v,\nu),u-v\rangle_{V_1} \leq c_1\big(\|u-v\|_{H_1}^2+\mathbb{W}_{2,H_1}(\mu,\nu)^2\big).
\end{eqnarray}
Moreover,
\begin{eqnarray*}
\!\!\!\!\!\!\!\!&&\|f(u_1,\mu,v_1)-f(u_2,\nu,v_2)\|_{H_1}
\nonumber\\
\leq \!\!\!\!\!\!\!\!&& c_1\big(\|u_1-u_2\|_{H_1}+\|v_1-v_2\|_{H_2}+\mathbb{W}_{2,H_1}(\mu,\nu)\big)
\end{eqnarray*}
and
\begin{eqnarray*}
\|B_1(u,\mu)-B_1(v,\nu)\|_{L_2(U_1,H_1)}
\leq \!\!\!\!\!\!\!\!&& c_1\big(\|u-v\|_{H_1}+\mathbb{W}_{2,H_1}(\mu,\nu)\big).
\end{eqnarray*}
\item [$({\mathbf{A}}{\mathbf{3}})$](Coercivity)
\begin{eqnarray*}
2_{V_1^*}\langle A_1(u,\mu),u\rangle_{V_1}+\|B_1(u,\mu)\|_{L_2(U_1,H_1)}^2\leq -\theta\|u\|_{V_1}^\alpha+c_1\big(1+\|u\|_{H_1}^2+\mu(\|\cdot\|_{H_1}^2)\big).
\end{eqnarray*}
\item [$({\mathbf{A}}{\mathbf{4}})$](Growth)
\begin{eqnarray*}
\|A_1(u,\mu)\|_{{V_1}^*}^{\frac{\alpha}{\alpha-1}}\leq c_1\big(1+\|u\|_{V_1}^{\alpha}+\mu(\|\cdot\|_{H_1}^2)\big).
\end{eqnarray*}
\end{enumerate}
\end{hypothesis}

\begin{hypothesis}\label{h2}
There are some constants $\beta>1$, $\eta,\kappa>0$, $L_{B_2},c_2>0$ such that for all $u,u_1,u_2\in H_1,v,v_1,v_2,w\in V_2$ and $\mu,\nu\in\mathscr{P}_2(H_1)$ we have
\begin{enumerate}
\item [$({\mathbf{H}}{\mathbf{1}})$](Demicontinuity) The map
\begin{eqnarray*}
H_1\times\mathscr{P}_2(H_1)\times V_2\ni(u,\mu,v)\mapsto{_{V_2^*}\langle A_2(u,\mu,v),w\rangle_{V_2}}
\end{eqnarray*}
is continuous.

\item [$({\mathbf{H}}{\mathbf{2}})$](Monotonicity and Lipschitz)
\begin{eqnarray}\label{c1}
\!\!\!\!\!\!\!\!&&_{V_2^*}\langle A_2(u_1,\mu,v_1)-A_2(u_2,\nu,v_2),v_1-v_2\rangle_{V_2}
\nonumber\\
\leq\!\!\!\!\!\!\!\!&& -\kappa\|v_1-v_2\|_{H_2}^2+c_2\big(\|u_1-u_2\|_{H_1}^2+\mathbb{W}_{2,H_1}(\mu,\nu)^2\big)
\end{eqnarray}
and
\begin{eqnarray*}
\|B_2(u_1,\mu,v_1)-B_2(u_2,\nu,v_2)\|_{L_2(U,H_2)}\leq L_{B_2}\|v_1-v_2\|_{H_2}+c_2\big(\|u_1-u_2\|_{H_1}+\mathbb{W}_{2,H_1}(\mu,\nu)\big).
\end{eqnarray*}

\item [$({\mathbf{H}}{\mathbf{3}})$](Coercivity)
\begin{eqnarray*}
\!\!\!\!\!\!\!\!&&2_{V_2^*}\langle A_2(u,\mu,v),v\rangle_{V_2}+\|B_2(u,\mu,v)\|_{L_2(U_2,H_2)}^2
\nonumber\\
\leq \!\!\!\!\!\!\!\!&& c_2\left(1+\|v\|_{H_2}^2+\|u\|_{H_1}^2+\mu(\|\cdot\|_{H_1}^2)\right)-\eta\|v\|_{V_2}^\beta.
\end{eqnarray*}

\item [$({\mathbf{H}}{\mathbf{4}})$](Growth)
\begin{eqnarray*}
\|A_2(u,\mu,v)\|_{V_2^*}^{\frac{\beta}{\beta-1}}\leq c_2\big(1+\|v\|_{V_2}^{\beta}+\|u\|_{H_1}^{2}+\mu(\|\cdot\|_{H_1}^2)\big).
\end{eqnarray*}
\end{enumerate}
\end{hypothesis}
\begin{Rem}\label{r1}
(i) Note that the assumptions for the slow component of system (\ref{e1}) in Hypothesis \ref{h1} extends the classical variational framework to the distribution dependent case, which are applicable to various McKean-Vlasov quasilinear and semilinear SPDEs, such as distribution dependent stochastic porous media type equations and stochastic p-Laplace type equations.

(ii) The strictly monotone condition (\ref{c1}) is used to guarantee the existence and uniqueness of invariant probability measure and the associated exponential ergodicity for the frozen equation (see Eq.~(\ref{FEQ1}) below) of the fast component of system (\ref{e1}). A typical example satisfying  Hypothesis \ref{h2} will be presented in section \ref{sec5}.
\end{Rem}

The definition of variational solution to system~(\ref{e1}) is given as follows.
\begin{definition}\label{d1}
For any $\varepsilon>0$, we call a continuous $H_1\times H_2$-valued $(\mathscr{F}_t)_{t\geq 0}$-adapted process $(X^{\varepsilon}_t,Y^{\varepsilon}_t)_{t\in[0,T]}$ is a solution of the system (\ref{e1}), if for its $dt\times \mathbb{P}$-equivalent class $(\hat{X}^{\varepsilon}_t,\hat{Y}^{\varepsilon}_t)_{t\in[0,T]}$ satisfying
\begin{equation*}
\hat{X}^{\varepsilon}\in L^{\alpha}\big([0,T]\times\Omega,dt\times\mathbb{P};V_1\big)\cap L^2\big([0,T]\times\Omega,dt\times\mathbb{P};H_1\big),
\end{equation*}
\begin{equation*}
\hat{Y}^{\varepsilon}\in L^{\beta}\big([0,T]\times\Omega,dt\times\mathbb{P};V_2\big)\cap L^2\big([0,T]\times\Omega,dt\times\mathbb{P};H_2\big),
\end{equation*}
where $\alpha,\beta$ is the same as defined in $({\mathbf{A}}{\mathbf{3}})$ and $({\mathbf{H}}{\mathbf{3}})$, respectively, and $\mathbb{P}$-a.s.,
\begin{equation*}
\left\{ \begin{aligned}
&dX^{\varepsilon}_t=x+\int_0^t\left[A_1(\bar{X}^{\varepsilon}_s,\mathscr{L}_{\bar{X}^{\varepsilon}_s})
+f(\bar{X}^{\varepsilon}_s,\mathscr{L}_{\bar{X}^{\varepsilon}_s},Y^{\varepsilon}_s)\right]ds
+\int_0^tB_1(\bar{X}^{\varepsilon}_s,\mathscr{L}_{\bar{X}^{\varepsilon}_s})dW^1_s,\\
&dY^{\varepsilon}_t=y+\frac{1}{\varepsilon}\int_0^tA_2(\bar{X}^{\varepsilon}_s,\mathscr{L}_{\bar{X}^{\varepsilon}_s},\bar{Y}^{\varepsilon}_s)ds
+\frac{1}{\sqrt{\varepsilon}}\int_0^tB_2(\bar{X}^{\varepsilon}_s,\mathscr{L}_{\bar{X}^{\varepsilon}_s},\bar{Y}^{\varepsilon}_s)d W^{2}_{s},
\end{aligned} \right.
\end{equation*}
here $(\bar{X}^{\varepsilon},\bar{Y}^{\varepsilon})$ is an $V_1\times V_2$-valued progressively measurable $dt\times\mathbb{P}$-version of $(\hat{X}^{\varepsilon},\hat{Y}^{\varepsilon})$.
\end{definition}

The first result is about the existence and uniqueness of solutions to system (\ref{e1}).
\begin{theorem}\label{Th1}
Suppose that the assumptions $({\mathbf{A}}{\mathbf{1}})$-$({\mathbf{A}}{\mathbf{4}})$ and $({\mathbf{H}}{\mathbf{1}})$-$({\mathbf{H}}{\mathbf{4}})$ hold. For each $\varepsilon>0$ and initial values $x\in H_1$, $y\in H_2$, system~(\ref{e1}) has a unique solution $(X^{\varepsilon}_t,Y^{\varepsilon}_t)_{t\in[0,T]}$ in the sense of Definition \ref{d1}.
\end{theorem}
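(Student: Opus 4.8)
The natural strategy is a fixed point argument on the flow of marginal laws. First I would freeze the distribution: given a flow $\mu=(\mu_t)_{t\in[0,T]}\in C([0,T];\mathscr{P}_2(H_1))$, replace every occurrence of $\mathscr{L}_{X^{\varepsilon}_t}$ in \eref{e1} by $\mu_t$. This severs the mean-field interaction and leaves a genuinely distribution-\emph{independent} coupled slow--fast SPDE, which I would solve by the variational method for the fixed $\varepsilon>0$. Writing $Z=(X,Y)$ and working on the product Gelfand triple $V_1\times V_2\subset H_1\times H_2\subset V_1^*\times V_2^*$, I would set $\mathcal{A}^\mu(t,Z)=\big(A_1(X,\mu_t)+f(X,\mu_t,Y),\,\tfrac1\varepsilon A_2(X,\mu_t,Y)\big)$ and let $\mathcal{B}^\mu$ be the block-diagonal diffusion built from $B_1(X,\mu_t)$ and $\tfrac1{\sqrt\varepsilon}B_2(X,\mu_t,Y)$, driven by $(W^1,W^2)$. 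Solving this decoupled system produces a solution map $\Phi:\mu\mapsto(\mathscr{L}_{X^\mu_t})_{t\in[0,T]}$, and a solution of \eref{e1} is precisely a fixed point of $\Phi$.

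The core analytic step is the well-posedness of the decoupled system. For a fixed $\mu$ the Wasserstein terms in $({\mathbf A}{\mathbf 2})$ and $({\mathbf H}{\mathbf 2})$ vanish, so combining $({\mathbf A}{\mathbf 2})$, $({\mathbf H}{\mathbf 2})$ and the Lipschitz bounds on $f,B_1,B_2$ (with Young's inequality on the cross terms from $f$ and $B_2$) gives global monotonicity of $(\mathcal{A}^\mu,\mathcal{B}^\mu)$ on $H_1\times H_2$, with a constant depending on $\varepsilon,\kappa,L_{B_2}$ but harmless for existence and uniqueness. Likewise $({\mathbf A}{\mathbf 3})$, $({\mathbf H}{\mathbf 3})$ and the linear growth of $f$ extracted from its Lipschitz property yield the coercivity bound
\[
2\,{}_{V^*}\langle\mathcal{A}^\mu(t,Z),Z\rangle_V+\|\mathcal{B}^\mu(t,Z)\|^2\le-\theta\|X\|_{V_1}^\alpha-\frac{\eta}{\varepsilon}\|Y\|_{V_2}^\beta+C\big(1+\|Z\|_{H_1\times H_2}^2+\mu_t(\|\cdot\|_{H_1}^2)\big),
\]
where the last term is bounded because $\mu\in C([0,T];\mathscr{P}_2(H_1))$, while demicontinuity and growth follow directly from $({\mathbf A}{\mathbf 1})$, $({\mathbf A}{\mathbf 4})$, $({\mathbf H}{\mathbf 1})$, $({\mathbf H}{\mathbf 4})$. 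Since the two coercivity exponents $\alpha$ and $\beta$ differ, this falls slightly outside the single-triple variational theorem, so I would run a Galerkin scheme in bases of $H_1$ and $H_2$ drawn from $V_1$ and $V_2$, derive a priori bounds \emph{separately} in $L^\alpha([0,T]\times\Omega;V_1)\cap L^2(\Omega;C([0,T];H_1))$ and $L^\beta([0,T]\times\Omega;V_2)\cap L^2(\Omega;C([0,T];H_2))$, and pass to the limit via the monotonicity (Minty) trick to identify the unique solution $(X^\mu,Y^\mu)$ in the sense of Definition \ref{d1}; the second-moment bounds guarantee $\Phi(\mu)\in C([0,T];\mathscr{P}_2(H_1))$.

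For the contraction, take two flows $\mu,\nu$ and apply It\^o's formula to $\|X^\mu_t-X^\nu_t\|_{H_1}^2+\|Y^\mu_t-Y^\nu_t\|_{H_2}^2$. Now the measures differ, so the same monotonicity/Lipschitz bookkeeping retains the Wasserstein driving term, giving after taking expectations
\[
\frac{d}{dt}\EE\big(\|X^\mu_t-X^\nu_t\|_{H_1}^2+\|Y^\mu_t-Y^\nu_t\|_{H_2}^2\big)\le C\,\EE\big(\|X^\mu_t-X^\nu_t\|_{H_1}^2+\|Y^\mu_t-Y^\nu_t\|_{H_2}^2\big)+C\,\mathbb{W}_{2,H_1}(\mu_t,\nu_t)^2.
\]
Gronwall's lemma yields $\EE\|X^\mu_t-X^\nu_t\|_{H_1}^2\le C\int_0^t\mathbb{W}_{2,H_1}(\mu_s,\nu_s)^2\,ds$, and since $(X^\mu_t,X^\nu_t)$ is a coupling built on the same noise and initial datum we have $\mathbb{W}_{2,H_1}(\Phi(\mu)_t,\Phi(\nu)_t)^2\le\EE\|X^\mu_t-X^\nu_t\|_{H_1}^2$. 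Iterating this inequality gives the factorial bound $\sup_{t\le T}\mathbb{W}_{2,H_1}(\Phi^n(\mu)_t,\Phi^n(\nu)_t)^2\le\tfrac{(CT)^n}{n!}\sup_{t\le T}\mathbb{W}_{2,H_1}(\mu_t,\nu_t)^2$, so $\Phi^n$ is a contraction for large $n$ and Banach's fixed point theorem yields a unique $\mu^\ast$. Any solution of \eref{e1} has a law that is a fixed point of $\Phi$, hence equals $\mu^\ast$, and decoupled uniqueness then pins down the process, so $(X^{\mu^\ast},Y^{\mu^\ast})$ is the unique solution.

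I expect the main obstacle to be the decoupled well-posedness rather than the fixed point step. The system lives on a \emph{product} of two Gelfand triples with \emph{distinct} coercivity exponents $\alpha\ne\beta$ and is coupled nonlinearly through $f$ and through the $X$-dependence of $A_2,B_2$, so it is not directly covered by the classical monotone variational theorem. Carrying out the Galerkin approximation and the Minty limiting argument while maintaining the a priori estimates simultaneously in the different spaces $V_1$ and $V_2$, and controlling the $1/\varepsilon$ and $1/\sqrt\varepsilon$ scalings, is the delicate part; once this is in place, freezing/unfreezing the law and the Wasserstein contraction are comparatively routine.
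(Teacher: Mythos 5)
Your proposal is correct, but it follows a genuinely different route from the paper. You use the classical Sznitman-type scheme: freeze the measure flow $\mu\in C([0,T];\mathscr{P}_2(H_1))$, solve the resulting distribution-free slow--fast system variationally, and recover the McKean--Vlasov solution as the unique fixed point of $\mu\mapsto(\mathscr{L}_{X^\mu_t})_{t\in[0,T]}$ via the $(CT)^n/n!$ iterated-Gronwall contraction. The paper never decouples: it sets $\varepsilon=1$, rewrites the system as a single distribution-dependent equation for $\Theta=(X,Y)$ on the product triple $\mathcal{V}\subset\mathcal{H}\subset\mathcal{V}^*$, runs a Galerkin scheme whose finite-dimensional approximants are themselves McKean--Vlasov equations (well-posedness imported from \cite{RTW,HRW}), and identifies the weak limits by a Minty argument carried out \emph{with} the law dependence: the key device is the damping factor $e^{-\lambda t}$ together with the observation that, inside the expectation, $\mathbb{W}_{2,\mathcal{H}}(\mathscr{L}_{\Theta_s},\mathscr{L}_{\phi_s})^2\le\EE\|\Theta_s-\phi_s\|_{\mathcal{H}}^2$, so choosing $\lambda$ large absorbs the Wasserstein terms in \eref{mon}--\eref{Lip}; uniqueness then drops out directly from It\^o's formula and the same inequalities, with no iteration. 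You correctly identify the shared technical core --- both routes must do Galerkin plus Minty on a product triple with distinct coercivity exponents $\alpha\neq\beta$, handled by separate a priori bounds in $K_1$ and $K_2$ --- so neither approach escapes that work. What your route buys is modularity (the SPDE step is distribution-free, so the Minty identification needs no dominated-convergence handling of $\mathbb{W}_2(\mathscr{L}_{\Theta},\mathscr{L}_{\phi})$, and the Galerkin systems are ordinary SDEs rather than finite-dimensional McKean--Vlasov equations); what the paper's route buys is brevity, since the strong global monotonicity in $({\mathbf A}{\mathbf 2})$/$({\mathbf H}{\mathbf 2})$, Lipschitz in $\mathbb{W}_2$, makes the direct argument close in one pass. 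Two small points you should make explicit if you write this up: to conclude that \emph{every} solution's law is a fixed point, you need the law flow of a Definition \ref{d1} solution to lie in $C([0,T];\mathscr{P}_2(H_1))$, which follows from $\EE\sup_{t\le T}\|X_t\|_{H_1}^2<\infty$ (It\^o plus Burkholder--Davis--Gundy) and $\mathbb{W}_{2,H_1}(\mathscr{L}_{X_t},\mathscr{L}_{X_s})^2\le\EE\|X_t-X_s\|_{H_1}^2$; and the frozen coefficients are time-dependent through $\mu_t$, so the variational hypotheses must be checked with the requisite measurability in $t$ (immediate from continuity of $t\mapsto\mu_t$ and demicontinuity). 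Your remark that the $-\kappa$ term and the $1/\varepsilon$ scalings are harmless for fixed $\varepsilon$ is also right; the condition $\kappa>2L_{B_2}^2$ is only needed later for the averaging rate, not for Theorem \ref{Th1}.
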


The next main result of this paper is the strong averaging principle for the system \eref{e1}.
\begin{theorem}\label{main result 1}
Suppose that the assumptions $({\mathbf{A}}{\mathbf{1}})$-$({\mathbf{A}}{\mathbf{4}})$ and $({\mathbf{H}}{\mathbf{1}})$-$({\mathbf{H}}{\mathbf{4}})$ hold. If $\kappa>2L_{B_2}^2$, then for any initial values $x\in H_1$, $y\in H_2$ and $T>0$, we have
\begin{align}
\mathbb{E} \left(\sup_{t\in[0,T]}\|X_{t}^{\vare}-\bar{X}_{t}\|_{H_1}^{2} \right)\leq C_T(1+\|x\|_{H_1}^2+\|y\|_{H_2}^2)\varepsilon^{1/3}\rightarrow0,~~~\text{as}~\varepsilon\rightarrow0,\label{2.2}
\end{align}
where $C_T$ is a constant only depending on $T$,  $\bar{X}_t$ is the solution of the following averaged equation
\begin{equation}\left\{\begin{array}{l}
\displaystyle d\bar{X}_t=\left[A_1(\bar{X}_t,\mathscr{L}_{\bar{X}_t})
+\bar{f}(\bar{X}_t,\mathscr{L}_{\bar{X}_t})\right]dt
+B_1(\bar{X}_t,\mathscr{L}_{\bar{X}_t})dW^1_t,\\
\bar{X}_{0}=x. \end{array}\right. \label{1.3}
\end{equation}
 Here the nonlinear coefficient $\bar{f}(x,\mu):=\int_{H_2}f(x,\mu,y)\nu^{x,\mu}(dy)$ is the average of $f$ with $\nu^{x,\mu}$ being the unique invariant distribution of the frozen equation below with respect to any fixed $x\in H_1$ and $\mu\in\mathscr{P}_2(H_1)$,
\begin{eqnarray*}
\left\{ \begin{aligned}
&dY_{t}=A_2(x,\mu,Y_t)dt
+B_2(x,\mu,Y_t)d \tilde{W}^{2}_{t},\\
&Y_{0}=y,
\end{aligned} \right.
\end{eqnarray*}
where $\tilde{{W}}_{t}^{{2}}$ is an $U_2$-valued cylindrical Wiener process defined on another
probability space $(\tilde{\Omega},\tilde{\mathscr{F}},\tilde{\mathbb{P}})$.
\end{theorem}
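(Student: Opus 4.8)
The plan is to adapt the classical Khasminskii time-discretization scheme to the present variational, distribution-dependent setting. Throughout I would rely on the uniform-in-$\varepsilon$ a priori moment bounds for $(X^\varepsilon,Y^\varepsilon)$ (obtained together with Theorem \ref{Th1} in Section 3), which supply the integrability needed to justify every subsequent estimate, and on the elementary domination $\mathbb{W}_{2,H_1}(\mathscr{L}_{X^\varepsilon_t},\mathscr{L}_{\bar X_t})^2\le \mathbb{E}\|X^\varepsilon_t-\bar X_t\|_{H_1}^2$, which is ultimately what lets me close the self-referential McKean-Vlasov coupling. I would first analyze the frozen equation: using the strict monotonicity in $(\mathbf{H}\mathbf{2})$ together with the hypothesis $\kappa>2L_{B_2}^2$, one shows that for each fixed $(x,\mu)$ it has a unique invariant measure $\nu^{x,\mu}$ and that the associated process is exponentially ergodic in $\mathbb{W}_{2,H_2}$, with a contraction rate independent of the frozen parameters. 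I would then verify that $\bar f(x,\mu)=\int_{H_2}f(x,\mu,y)\,\nu^{x,\mu}(dy)$ inherits Lipschitz continuity in $(x,\mu)$ from the Lipschitz bounds on $f$ in $(\mathbf{A}\mathbf{2})$ and from the Lipschitz dependence of $\nu^{x,\mu}$ on its parameters; this is what makes the averaged equation \eref{1.3} well posed and drives the averaging.

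Next comes the discretization itself. I would partition $[0,T]$ into subintervals of length $\Delta=\Delta(\varepsilon)$ and introduce an auxiliary fast process $\hat Y^\varepsilon$ that freezes the slow variable and its law at the left endpoint $t_k$ of each block. Two estimates are then needed: first, a bound on $\mathbb{E}\|Y^\varepsilon_t-\hat Y^\varepsilon_t\|_{H_2}^2$, which follows from $(\mathbf{H}\mathbf{2})$ and from a control on the $H_1$-oscillation $\mathbb{E}\|X^\varepsilon_t-X^\varepsilon_{t_k}\|_{H_1}^2$ of the slow component over a block; second, the core ergodic estimate, obtained by the time change $s\mapsto s/\varepsilon$, which turns each block of $\hat Y^\varepsilon$ into a time-rescaled copy of the frozen process and lets me replace the time average $\frac1\Delta\int_{t_k}^{t_{k+1}}f(X^\varepsilon_{t_k},\mathscr{L}_{X^\varepsilon_{t_k}},\hat Y^\varepsilon_s)\,ds$ by $\bar f(X^\varepsilon_{t_k},\mathscr{L}_{X^\varepsilon_{t_k}})$ up to an error governed by the exponential mixing, i.e. of order $\varepsilon/\Delta$.

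Finally I would apply It\^o's formula to $\|X^\varepsilon_t-\bar X_t\|_{H_1}^2$. The advantage of working in the Gelfand triple is that the monotonicity \eref{m} of $A_1$ bounds the leading drift term by $c_1(\|X^\varepsilon_t-\bar X_t\|_{H_1}^2+\mathbb{W}_{2,H_1}(\mathscr{L}_{X^\varepsilon_t},\mathscr{L}_{\bar X_t})^2)$ without any Lipschitz assumption on $A_1$, while the Lipschitz bounds on $f$, $\bar f$ and $B_1$ handle the remaining terms; the stochastic integral is controlled by the Burkholder--Davis--Gundy inequality to produce the supremum in \eref{2.2}, and the Wasserstein contributions are absorbed via the domination above. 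After inserting the discretization and ergodic errors and applying Gronwall's inequality, I would optimize $\Delta$: the block-oscillation error contributes a term of order $\Delta$ (or $\Delta^{1/2}$) and the mixing error a term of order $\varepsilon/\Delta$, so the choice $\Delta\sim\varepsilon^{2/3}$ balances them and yields the rate $\varepsilon^{1/3}$.

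I expect the main obstacle to be twofold, and precisely the point where this problem departs from the finite-dimensional McKean-Vlasov SDEs of \cite{RSX}. On the one hand, because $A_1,A_2$ are genuinely nonlinear and only monotone and coercive, mild-solution or semigroup estimates are unavailable; instead the a priori bounds and the block-oscillation estimate $\mathbb{E}\|X^\varepsilon_t-X^\varepsilon_{t_k}\|_{H_1}^2\lesssim\Delta$ must be extracted purely variationally, which is delicate because $A_1(X^\varepsilon_s)$ lives in $V_1^*$ and is only controlled through the growth bound $(\mathbf{A}\mathbf{4})$. On the other hand, the ergodicity and the Lipschitz regularity of $\nu^{x,\mu}$, hence of $\bar f$, must be made uniform enough in the measure argument $\mu$ for the Wasserstein terms to be absorbable through the Gronwall loop; keeping this self-referential dependence under control while simultaneously handling the two distinct Gelfand triples is the technically hardest part.
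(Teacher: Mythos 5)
Your proposal is correct and follows essentially the same route as the paper: Khasminskii discretization with an auxiliary process $\hat{Y}^{\vare}$ frozen at block endpoints, exponential ergodicity of the frozen equation under $\kappa>2L_{B_2}^2$ yielding a mixing error of order $\varepsilon/\delta$, It\^{o}'s formula combined with the monotonicity \eref{m}, the Lipschitz bounds on $f,\bar{f},B_1$, Burkholder--Davis--Gundy and Gronwall, and finally $\delta\sim\varepsilon^{2/3}$ balancing $\delta^{1/2}$ against $\varepsilon/\delta$ to give the rate $\varepsilon^{1/3}$. The only slight deviation is that where you posit a pointwise block-oscillation bound $\mathbb{E}\|X^{\vare}_t-X^{\vare}_{t_k}\|_{H_1}^2\lesssim\delta$, the paper proves only the time-integrated version $\mathbb{E}\int_0^T\|X^{\vare}_t-X^{\vare}_{t(\delta)}\|_{H_1}^2\,dt\leq C_T\delta$ (Lemma \ref{COX}), which is weaker but suffices for every place it is used and, as you correctly anticipate in your discussion of the variational difficulties, is exactly what can be extracted from $({\mathbf{A}}{\mathbf{4}})$ with initial data merely in $H_1\times H_2$.
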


Throughout the paper, $C,C_{T}$ denote some positive constants which may change from line to line, and $C_{T}$ is used to stress that the constant only depends on $T$.
\section{Proof of Existence and uniqueness}
\setcounter{equation}{0}
 \setcounter{definition}{0}
In this section, we will use the technique of Galerkin type approximation to get the existence and uniqueness of strong solutions to system \eref{e1}.

Without loss of the generality, we assume $\varepsilon=1$ in system \eref{e1} and consider the following equation
\begin{eqnarray}\label{e2}
\left\{ \begin{aligned}
&dX^{1}_t=\left[A_1(X^{1}_t,\mathscr{L}_{X^{1}_t})
+f(X^{1}_t,\mathscr{L}_{X^{1}_t},Y^{2}_t)\right]dt
+B_1(X^{1}_t,\mathscr{L}_{X^{1}_t})dW^{1}_t,\\
&dY^{2}_t=A_2(X^{1}_t,\mathscr{L}_{X^{1}_t},Y^{2}_t)dt
+B_2(X^{1}_t,\mathscr{L}_{X^{1}_t},Y^{2}_t)d W^{2}_{t},
\\&X^{1}_0=x, Y^{2}_0=y.
\end{aligned} \right.
\end{eqnarray}
Choosing $\{e_1,e_2,\cdots\}\subset V_1$ as an orthonormal basis (ONB) on $H_1$ and $\{l_1,l_2,\cdots\}\subset V_2$ as an ONB on $H_2$. Define the following maps
$$\Pi^n_1:V_1^{*}\rightarrow H_1^n:=\text{span}\{e_1,e_2,\cdots\,e_n\},~\Pi^n_2:V_2^{*}\rightarrow H_2^n:=\text{span}\{l_1,l_2,\cdots\,l_n\},~n\geq1,$$
respectively by
$$\Pi^n_{1}x:=\sum\limits_{i=1}^{n}{}_{V_1^*}\langle x,e_i\rangle_{V}e_i,~x\in V_1^*,$$
and
$$\Pi^n_{2}y:=\sum\limits_{i=1}^{n}{}_{V_2^*}\langle y,l_i\rangle_{V_2}l_i,~y\in V_2^*.$$
It is easy to see that if we restrict $\Pi^n_1$ to $H_1$, denoted by $\Pi^n_1|_{H_1}$, then it is an orthogonal projection onto $H^n_1$ on $H_1$. Denote by $\{g_1,g_2,\cdots\}$ and $\{j_1,j_2,\cdots\}$ the ONBs on $U_1$ and $U_2$, respectively. Let
\begin{equation*}
W^{1,n}_t:=\widetilde{\Pi}^n_{1}W^1_t=\sum\limits_{i=1}^{n}\langle W^1_t,g_i\rangle_{U_1}g_i,~n\geq1,
\end{equation*}
and
\begin{equation*}
W^{2,n}_t:=\widetilde{\Pi}^n_{2}W^2_t=\sum\limits_{i=1}^{n}\langle W^2_t,j_i\rangle_{U_2}j_i,~n\geq1,
\end{equation*}
where $\widetilde{\Pi}_{1}^n$ is an orthonormal projection onto $U_1^n:=\text{span}\{g_1,g_2,\cdots,g_n\}$ on $U_1$, and analogously for $\widetilde{\Pi}_{2}^n$.

For any $n\geq1$, we consider the following finite dimensional equation
\begin{eqnarray}\label{ei}
\left\{ \begin{aligned}
&dX^{1,n}_t=\Pi^n_1\left[A_1(X^{1,n}_t,\mathscr{L}_{X^{1,n}_t})
+f(X^{1,n}_t,\mathscr{L}_{X^{1,n}_t},Y^{2,n}_t)\right]dt
+\Pi^n_1B_1(X^{1,n}_t,\mathscr{L}_{X^{1,n}_t})dW^{1,n}_t,\\
&dY^{2,n}_t=\Pi^n_2A_2(X^{1,n}_t,\mathscr{L}_{X^{1,n}_t},Y^{2,n}_t)dt
+\Pi^n_2B_2(X^{1,n}_t,\mathscr{L}_{X^{1,n}_t},Y^{2,n}_t)d W^{2,n}_{t},
\\&X^{1,n}_0=x^n, Y^{2,n}_0=y^n,
\end{aligned} \right.
\end{eqnarray}
here we denote $X^{1,n}:=\Pi^n_1X$, $Y^{2,n}:=\Pi^n_2Y$, $x^{n}:=\Pi^n_1x$ and $y^{n}:=\Pi^n_2y$.

We now introduce the following product spaces.
Let $\mathcal{H}:={H_1}\times H_2$ be the product Hilbert space. For any $\phi:=(\phi_1,\phi_2),\varphi:=(\varphi_1,\varphi_2)\in\mathcal{H}$, we denote the scalar product and the induced norm by
\begin{eqnarray*}
 \qquad\langle\phi,\varphi\rangle_{\mathcal{H}}=\langle\phi_1,\varphi_1\rangle_{H_1}
 +\langle\phi_2,\varphi_2\rangle_{H_2},~~
\|\phi\|_{\mathcal{H}}=\sqrt{\langle\phi,\phi\rangle_{\mathcal{H}}}=
\sqrt{\|\phi_1\|_{H_1}^2+\|\phi_2\|_{H_2}^2}.
\end{eqnarray*}
Similarly, we also define $\mathcal{U}:={U}_1\times U_2$ and $\mathcal{V}:={V}_1\times V_2$. Then $\mathcal{V}$ is a reflexive Banach space with the norm,
\begin{eqnarray*}\|\psi\|_{\mathcal{V}}=\sqrt{\langle\psi,\psi\rangle_{\mathcal{V}}}=\sqrt{\|\psi_1\|_{V_1}^2+\|\psi_2\|_{V_2}^2},
~~\text{for~any}~\psi=(\psi_1,\psi_2)\in \mathcal{V}.\end{eqnarray*}

We rewrite the systems \eref{e2} and \eref{ei} for $\Theta=(X^1,Y^2)$ and $\Theta^{n}=(X^{1,n},Y^{2,n})$, respectively, as
\begin{eqnarray}
d\Theta_t={A}(\Theta_t,\mathscr{L}_{\Theta_t}) dt+B(\Theta_t,\mathscr{L}_{\Theta_t})dW_t,\quad \Theta_0=(x,y),\label{E4.1}
\end{eqnarray}
\begin{eqnarray}
d\Theta^{n}_t=\Pi^{n}{A}(\Theta^{n}_t,\mathscr{L}_{\Theta^{n}_t}) dt+\Pi^{n}B(\Theta^{n}_t,\mathscr{L}_{\Theta^{n}_t})dW^{n}_t,\quad \Theta^{n}_0=(x^{n},y^{n}),\label{E4.2}
\end{eqnarray}
where $\mathscr{L}_{\Theta}\in C([0,T];\mathscr{P}_2(\mathcal{H}))$ with its marginal distribution $\mathscr{L}_{X^{1}}\in C([0,T];\mathscr{P}_2(H_1))$, analogously for $\mathscr{L}_{\Theta^{n}}$, $\Pi^{n}:=diag\Big(\Pi^{n}_1,\Pi^{n}_2\Big)$ and
\begin{eqnarray*}
&&{A}(\Theta_t,\mathscr{L}_{\Theta_t}):=\left(A_1(X^{1}_t,\mathscr{L}_{X^{1}_t})
+f(X^{1}_t,\mathscr{L}_{X^{1}_t},Y^{2}_t),
A_2(X^{1}_t,\mathscr{L}_{X^{1}_t},Y^{2}_t)\right),
\\&&{A}(\Theta^{n}_t,\mathscr{L}_{\Theta^{n}_t}):=\left(A_1(X^{1,n}_t,\mathscr{L}_{X^{1,n}_t})
+f(X^{1,n}_t,\mathscr{L}_{X^{1,n}_t},Y^{2,n}_t),
A_2(X^{1,n}_t,\mathscr{L}_{X^{1,n}_t},Y^{2,n}_t)\right),
\\&&B(\Theta_t,\mathscr{L}_{\Theta_t}):=diag\left( B_1(X^{1}_t,\mathscr{L}_{X^{1}_t}), B_2(X^{1}_t,\mathscr{L}_{X^{1}_t},Y^{2}_t)\right),
\\&&B(\Theta^{n}_t,\mathscr{L}_{\Theta^{n}_t}):=diag\left( B_1(X^{1,n}_t,\mathscr{L}_{X^{1,n}_t}), B_2(X^{1,n}_t,\mathscr{L}_{X^{1,n}_t},Y^{2,n}_t)\right),
\end{eqnarray*}
and $W_t:=(W_t^{1},W_t^{2})$, $W_t^{n}:=(W_t^{1,n},W_t^{2,n})$.
Let $L_2(\mathcal{U},\mathcal{H})$ denotes the space of Hilbert-Schmidt operators from $\mathcal{U}$ to $\mathcal{H}$, with the norm:
$$
\|S\|_{L_2(\mathcal{U},\mathcal{H})}:=\sqrt{\|S_1\|^2_{L_2({U_1},{H_1})}
+\|S_2\|^2_{L_2({U_2},{H_2})}},\quad S=(S_1, S_2),
$$
where $S_i\in L_2({U_i},{H_i})$, $i=1,2$.
Let $\mathcal{V}^{*}$
be the dual space of $\mathcal{V}$, it is obvious that $\mathcal{V}^{*}=V_1^*\times V_2^*$, and we consider the following  Gelfand triple $$\mathcal{V}\subset\mathcal{H}\cong\mathcal{H}^{*}\subset\mathcal{V}^{*}.$$
It is easy to see that the following mappings
$${A}:\mathcal{V}\times\mathscr{P}_2(\mathcal{H})\rightarrow\mathcal{V}^{*},
~~B:\mathcal{V}\times\mathscr{P}_2(\mathcal{H})\rightarrow\mathcal{V}^{*}$$
are well defined.

To complete the proof, we first verify the new coefficients in equation (\ref{E4.1}) satisfy the monotonicity condition similar to \eref{m}. Indeed, for any $w_1=(u_1,v_1),w_2=(u_2,v_2)\in\mathcal{V}$, and $\vartheta_1=(\mu_1,\nu_1),\vartheta_2=(\mu_2,\nu_2)\in\mathscr{P}_2(\mathcal{H})$, by conditions $({\mathbf{A}}{\mathbf{2}})$ and $({\mathbf{H}}{\mathbf{2}})$, we have
\begin{eqnarray}\label{mon}
&&{_{\mathcal{V}^{*}}}\langle A(w_1,\vartheta_1)-A(w_2,\vartheta_2),w_1-w_2\rangle_{\mathcal{V}}
\nonumber\\=\!\!\!\!\!\!\!\!&&{_{{V_1}^{*}}}\langle A_1(u_1,\mu_1)-A_1(u_2,\mu_2),u_1-u_2\rangle_{{V_1}}
 +\langle f(u_1,\mu_1,v_1)-f(u_2,\mu_2,v_2),u_1-u_2\rangle_{H_1}
\nonumber\\\!\!\!\!\!\!\!\!&&+{_{{V_2}^{*}}}\langle A_2(u_1,\mu_1,v_1)-A_2(u_2,\mu_2,v_2),v_1-v_2\rangle_{{V_2}}
\nonumber\\\leq\!\!\!\!\!\!\!\!&&C(\|u_1-u_2\|_{H_1}^2+\|v_1-v_2\|_{H_2}^2+\mathbb{W}_{2,H_1}(\mu_1,\mu_2)^2)
\nonumber\\\leq\!\!\!\!\!\!\!\!&&C(\|w_1-w_2\|_{\mathcal{H}}^2+\mathbb{W}_{2,\mathcal{H}}(\vartheta_1,\vartheta_2)^2)
\end{eqnarray}
and
\begin{eqnarray}\label{Lip}
&&\|B(w_1,\vartheta_1)-B(w_2,\vartheta_2)\|^2_{L_2(\mathcal{U},\mathcal{H})}\nonumber\\
=\!\!\!\!\!\!\!\!&&\|B_1(u_1,\mu_1)-B_1(u_2,\mu_2)\|^2_{{L_2({U_1},{H_1})}}
+\|B_2(u_1,\mu_1,v_1)-B_2(u_2,\mu_2,v_2)\|^2_{{L_2({U_2},{H_2})}}\nonumber\\
\leq\!\!\!\!\!\!\!\!&&C(\|w_1-w_2\|_{\mathcal{H}}^2+\mathbb{W}_{2,\mathcal{H}}(\vartheta_1,\vartheta_2)^2).
\end{eqnarray}

Following from \cite[Lemma 2.2]{RTW} or \cite[Theorem 3.3]{HRW}, by \eref{mon}, \eref{Lip}, $({\mathbf{A}}{\mathbf{1}})$, $({\mathbf{H}}{\mathbf{1}})$, $({\mathbf{A}}{\mathbf{4}})$ and $({\mathbf{H}}{\mathbf{4}})$, system (\ref{ei}) has a unique continuous solution $(X^{1,n},Y^{2,n})$. We now define the following spaces equipped with the associated norms
\begin{eqnarray*}
\!\!\!\!\!\!\!\!&&J_i:=L^2([0,T]\times\Omega,dt\times\mathbb{P};L_2(U_i,H_i)),~i=1,2,
\nonumber\\
\!\!\!\!\!\!\!\!&&K_1:=L^\alpha([0,T]\times\Omega,dt\times\mathbb{P};V_1),~K_2:=L^\beta([0,T]\times\Omega,dt\times\mathbb{P};V_2),
\nonumber\\
\!\!\!\!\!\!\!\!&&K_1^*:=L^{\frac{\alpha}{\alpha-1}}([0,T]\times\Omega,dt\times\mathbb{P};V_1^*),~K_2^*:=L^{\frac{\beta}{\beta-1}}([0,T]\times\Omega,dt\times\mathbb{P};V_2^*).
\end{eqnarray*}

In order to prove the existence of solutions, we first give the following apriori estimates.
\begin{lemma}\label{l4}
Suppose that $({\mathbf{A}}{\mathbf{3}})$ and $({\mathbf{H}}{\mathbf{3}})$ hold. Then there exists a constant $C_T>0$, which is independent of $n$, such that for all $n\geq1$
$$\mathbb{E}\big[\sup_{t\in[0,T]}\|X^{1,n}_t\|_{H_1}^2\big]+\mathbb{E}\big[\sup_{t\in[0,T]}\|Y^{2,n}_t\|_{H_2}^2\big]+\|X^{1,n}\|_{K_1}+\|Y^{2,n}\|_{K_2}\leq C_T(1+\|x\|_{H_1}^2+\|y\|_{H_2}^2).$$
\end{lemma}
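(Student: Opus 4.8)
The plan is to establish the a priori estimates in Lemma \ref{l4} by applying the finite-dimensional Itô formula to the squared $\mathcal{H}$-norm of the Galerkin approximation $\Theta^n_t=(X^{1,n}_t,Y^{2,n}_t)$, then exploit the coercivity conditions $(\mathbf{A3})$ and $(\mathbf{H3})$ to control the drift, and finally close the estimate via Gronwall's inequality together with the Burkholder--Davis--Gundy inequality to handle the supremum in time. Since the two components live in different Gelfand triples $V_1\subset H_1\subset V_1^*$ and $V_2\subset H_2\subset V_2^*$ with different coercivity exponents $\alpha$ and $\beta$, I would treat $\|X^{1,n}_t\|_{H_1}^2$ and $\|Y^{2,n}_t\|_{H_2}^2$ separately rather than working directly with $\|\Theta^n_t\|_{\mathcal{H}}^2$, because the coercivity gains $-\theta\|u\|_{V_1}^\alpha$ and $-\eta\|v\|_{V_2}^\beta$ must be tracked in their respective norms to obtain the $K_1$ and $K_2$ bounds.

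First I would write down Itô's formula for $\|X^{1,n}_t\|_{H_1}^2$: the drift contributes $2{}_{V_1^*}\langle A_1(X^{1,n}_s,\mathscr{L}_{X^{1,n}_s}),X^{1,n}_s\rangle_{V_1}$, the cross term $2\langle f(X^{1,n}_s,\mathscr{L}_{X^{1,n}_s},Y^{2,n}_s),X^{1,n}_s\rangle_{H_1}$, and the Itô correction $\|\Pi^n_1 B_1\|^2_{L_2(U_1,H_1)}$. Combining the first and third via $(\mathbf{A3})$ yields $-\theta\|X^{1,n}_s\|_{V_1}^\alpha+c_1(1+\|X^{1,n}_s\|_{H_1}^2+\mu_s(\|\cdot\|^2_{H_1}))$, where the measure term equals $\mathbb{E}\|X^{1,n}_s\|_{H_1}^2$ since $\mu_s=\mathscr{L}_{X^{1,n}_s}$. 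The cross term I would bound by Young's inequality as $\|X^{1,n}_s\|_{H_1}^2+\|f\|_{H_1}^2$, using the Lipschitz growth of $f$ from $(\mathbf{A2})$ to dominate $\|f\|_{H_1}^2$ by $C(1+\|X^{1,n}_s\|_{H_1}^2+\|Y^{2,n}_s\|_{H_2}^2+\mu_s(\|\cdot\|^2_{H_1}))$. The crucial point is that the $\|Y^{2,n}_s\|_{H_2}^2$ appearing here is absorbed by running the analogous computation for $\|Y^{2,n}_t\|_{H_2}^2$ via $(\mathbf{H3})$, which produces its own $-\eta\|Y^{2,n}_s\|_{V_2}^\beta$ gain plus terms linear in $\|Y^{2,n}_s\|_{H_2}^2$ and $\|X^{1,n}_s\|_{H_1}^2$.

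Next I would add the two expected estimates together, so that the coupling terms close into a single Gronwall inequality for $g(t):=\mathbb{E}\|X^{1,n}_t\|_{H_1}^2+\mathbb{E}\|Y^{2,n}_t\|_{H_2}^2$; the negative coercivity terms are simply dropped (or retained) at this stage. Applying Gronwall gives a bound on $\sup_{t\in[0,T]}g(t)$ independent of $n$, since $\Pi^n_1,\Pi^n_2$ are contractions and $\|x^n\|_{H_1}\le\|x\|_{H_1}$, $\|y^n\|_{H_2}\le\|y\|_{H_2}$. Once $g$ is controlled, reintegrating the coercivity terms yields $\theta\,\mathbb{E}\int_0^T\|X^{1,n}_s\|_{V_1}^\alpha ds+\eta\,\mathbb{E}\int_0^T\|Y^{2,n}_s\|_{V_2}^\beta ds\le C_T$, which is exactly the $\|X^{1,n}\|_{K_1}$ and $\|Y^{2,n}\|_{K_2}$ bounds. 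Finally, to upgrade from $\mathbb{E}\sup$ over $\|\cdot\|_{H_i}^2$ rather than $\sup\mathbb{E}$, I would return to the pre-expectation Itô identity, take the supremum in $t$ first, and apply BDG to the stochastic integrals $\int_0^t 2\langle X^{1,n}_s,\Pi^n_1 B_1 dW^{1,n}_s\rangle_{H_1}$, bounding the resulting quadratic variation by $\varepsilon\,\mathbb{E}\sup_t\|X^{1,n}_t\|_{H_1}^2+C_\varepsilon\,\mathbb{E}\int_0^T\|B_1\|^2 ds$ and absorbing the small term to the left.

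\textbf{The main obstacle} I anticipate is the interplay between the two distinct coercivity scales and the distribution-dependence: the $f$-coupling forces $\|Y^{2,n}\|_{H_2}^2$ into the $X$-estimate, so the two Gronwall arguments genuinely must be run simultaneously rather than sequentially, and one must check that the Wasserstein/law terms $\mu_s(\|\cdot\|_{H_1}^2)=\mathbb{E}\|X^{1,n}_s\|_{H_1}^2$ are correctly identified as expectations of the approximating process itself (not the limit), so that they are harmlessly absorbed into $g(t)$ rather than introducing an uncontrolled external quantity. A secondary technical care point is verifying uniformity in $n$ throughout, which relies only on the projections being contractive and is routine once the structure above is set up.
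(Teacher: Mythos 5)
Your proposal is correct and takes essentially the same route as the paper's proof: Itô's formula applied to $\|X^{1,n}_t\|_{H_1}^2$ and $\|Y^{2,n}_t\|_{H_2}^2$ separately, the coercivity conditions $({\mathbf{A}}{\mathbf{3}})$, $({\mathbf{H}}{\mathbf{3}})$ together with the linear growth of $f$ to control the drift and the coupling term, the identification $\mathscr{L}_{X^{1,n}_s}(\|\cdot\|_{H_1}^2)=\mathbb{E}\|X^{1,n}_s\|_{H_1}^2$, Burkholder--Davis--Gundy with absorption of $\tfrac12\mathbb{E}\sup_t\|\cdot\|^2$ into the left side, Gronwall, and finally reintegration of the dropped coercivity terms to get the $K_1$, $K_2$ bounds; the only organizational differences are that the paper estimates $Y^{2,n}$ first and substitutes the result into the $X^{1,n}$-estimate rather than running your equivalent single coupled Gronwall on the sum, and that the paper localizes with the stopping time $\tau_R^n$ and passes to the limit $R\to\infty$ by monotone convergence to justify taking expectations of the local-martingale terms --- a standard technicality you should add to make your plan rigorous.
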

\begin{proof}
By It\^{o}'s formula for finite dimensional case and $({\mathbf{H}}{\mathbf{3}})$, we have
\begin{eqnarray*}
d\|Y_t^{2,n}\|_{H_2}^2=\!\!\!\!\!\!\!\!&&\Big[2{}_{V_2^*}\big\langle \Pi^n_2A_2(X_t^{1,n},\mathscr{L}_{X_t^{1,n}},Y_t^{2,n}),Y_t^{2,n}\big\rangle_{V_2}+\|\Pi^n_2B_2(X_t^{1,n},\mathscr{L}_{X_t^{1,n}},Y_t^{2,n})\widetilde{\Pi}^n_{2}\|_{L_2(U_2,H_2)}^2\Big]dt
\nonumber\\
\!\!\!\!\!\!\!\!&&+2\big\langle\Pi^n_2B_2(X_t^{1,n},\mathscr{L}_{X_t^{1,n}},Y_t^{2,n})dW^{2,n}_t,Y_t^{2,n}\big\rangle_{H_2}
\nonumber\\
\leq\!\!\!\!\!\!\!\!&&\Big[-\eta\|Y_t^{2,n}\|_{V_2}^{\beta}+c_2\big(1+\|X_t^{1,n}\|_{H_1}^2+\mathscr{L}_{X_t^{1,n}}(\|\cdot\|_{H_1}^2)+\|Y_t^{2,n}\|_{H_2}^2\big)\Big]dt+dM^n_t,
\end{eqnarray*}
where we denote $dM^n_t:=2\big\langle\Pi^n_2B_2(X_t^{1,n},\mathscr{L}_{X_t^{1,n}},Y_t^{2,n})dW^{2,n}_t,Y_t^{2,n}\big\rangle_{H_2}$.

We set the following stopping time
$$\tau_R^{n}:=\inf\big\{t\in[0,T]:\|X^{1,n}_t\|_{H_1}+\|Y^{2,n}_t\|_{H_2}>R\big\},~R>0.$$
Then, using Burkholder-Davis-Gundy's inequality, we have
\begin{eqnarray*}
\!\!\!\!\!\!\!\!&&\mathbb{E}\big[\sup_{t\in[0,T\wedge\tau_R^{n}]}\|Y_t^{2,n}\|_{H_2}^2\big]+\eta\mathbb{E}\int_0^{T\wedge\tau_R^{n}}\|Y_t^{2,n}\|_{V_2}^{\beta}dt
\nonumber\\
=\!\!\!\!\!\!\!\!&&\|y^n\|_{H_2}^2+CT+C\mathbb{E}\int_0^{T\wedge\tau_R^{n}}\|Y_t^{2,n}\|_{H_2}^2dt
\nonumber\\
\!\!\!\!\!\!\!\!&&+
C\mathbb{E}\int_0^{T\wedge\tau_R^{n}}\big(\|X_t^{1,n}\|_{H_1}^2+\mathscr{L}_{X_t^{1,n}}(\|\cdot\|_{H_1}^2)\big)dt
+\mathbb{E}\big[\sup_{t\in[0,T\wedge\tau_R^{n}]}|M^n_t|\big]
\nonumber\\
\leq\!\!\!\!\!\!\!\!&&\|y\|_{H_2}^2+\frac{1}{2}\mathbb{E}\big[\sup_{t\in[0,T\wedge\tau_R^{n}]}\|Y_t^{2,n}\|_{H_2}^2\big]+CT+C\mathbb{E}\int_0^{T\wedge\tau_R^{n}}\|Y_t^{2,n}\|_{H_2}^2dt
\nonumber\\
\!\!\!\!\!\!\!\!&&+
C\mathbb{E}\int_0^{T\wedge\tau_R^{n}}\big(\|X_t^{1,n}\|_{H_1}^2+\mathscr{L}_{X_t^{1,n}}(\|\cdot\|_{H_1}^2)\big)dt,
\end{eqnarray*}
which implies
\begin{eqnarray}\label{1}
\!\!\!\!\!\!\!\!&&\mathbb{E}\big[\sup_{t\in[0,T\wedge\tau_R^{n}]}\|Y_t^{2,n}\|_{H_2}^2\big]+2\eta\mathbb{E}\int_0^{T\wedge\tau_R^{n}}\|Y_t^{2,n}\|_{V_2}^{\beta}dt
\nonumber\\
\leq\!\!\!\!\!\!\!\!&&C\|y\|_{H_2}^2+C_T+C\int_0^{T}\mathbb{E}\sup_{s\in[0,t\wedge\tau_R^{n}]}\|Y_t^{2,n}\|_{H_2}^2dt
\nonumber\\
\!\!\!\!\!\!\!\!&&
+C\mathbb{E}\int_0^{T\wedge\tau_R^{n}}\big(\|X_t^{1,n}\|_{H_1}^2+\mathscr{L}_{X_t^{1,n}}(\|\cdot\|_{H_1}^2)\big)dt.~~
\end{eqnarray}
Applying Gronwall's inequality, we obtain
\begin{eqnarray}\label{2}
\!\!\!\!\!\!\!\!&&\mathbb{E}\big[\sup_{t\in[0,T\wedge\tau_R^{n}]}\|Y_t^{2,n}\|_{H_2}^2\big]+2\eta\mathbb{E}\int_0^{T\wedge\tau_R^{n}}\|Y_t^{2,n}\|_{V_2}^{\beta}dt
\nonumber\\
\leq\!\!\!\!\!\!\!\!&&C_T\|y\|_{H_2}^2+C_T
+C_T\mathbb{E}\int_0^{T\wedge\tau_R^{n}}\big(\|X_t^{1,n}\|_{H_1}^2+\mathscr{L}_{X_t^{1,n}}(\|\cdot\|_{H_1}^2)\big)dt.
\end{eqnarray}
Similarly, applying It\^{o}'s formula to $\|X_t^{1,n}\|_{H_1}^2$ and using $({\mathbf{A}}{\mathbf{3}})$, we have
\begin{eqnarray*}
d\|X_t^{1,n}\|_{H_1}^2=\!\!\!\!\!\!\!\!&&\Big[2_{V_1^*}\big\langle \Pi^n_1A_1(X_t^{1,n},\mathscr{L}_{X_t^{1,n}}),X_t^{1,n}\big\rangle_{V_1}+2\big\langle \Pi^n_1f(X_t^{1,n},\mathscr{L}_{X_t^{1,n}},Y_t^{2,n}),X_t^{1,n}\big\rangle_{H_1}
\nonumber\\
\!\!\!\!\!\!\!\!&&
+\|\Pi^n_1B_1(X_t^{1,n},\mathscr{L}_{X_t^{1,n}})\widetilde{\Pi}^n_{1}\|_{L_2(U_1,H_1)}^2\Big]dt+dN^{n}_t
\nonumber\\
\leq\!\!\!\!\!\!\!\!&&\Big[-\theta\|X_t^{1,n}\|_{V_1}^{\alpha}+C\big(1+\|X_t^{1,n}\|_{H_1}^2+\mathscr{L}_{X_t^{1,n}}(\|\cdot\|_{H_1}^2)\big)+C\|Y_t^{2,n}\|_{H_2}^2\Big]dt+dN^{n}_t,
\end{eqnarray*}
here we denote $dN^{n}_t:=2\big\langle\Pi^n_1B_1(X_t^{1,n},\mathscr{L}_{X_t^{1,n}})dW^{1,n}_t,X_t^{1,n}\big\rangle_{H_1}$.

By Burkholder-Davis-Gundy's inequality we infer that
\begin{eqnarray*}
\!\!\!\!\!\!\!\!&&\mathbb{E}\big[\sup_{t\in[0,T\wedge\tau_R^{n}]}\|X_t^{1,n}\|_{H_1}^2\big]+\theta\mathbb{E}\int_0^{T\wedge\tau_R^{n}}\|X_t^{1,n}\|_{V_1}^{\alpha}dt
\nonumber\\
\leq\!\!\!\!\!\!\!\!&&\|x\|_{H_1}^2+C_T+C\mathbb{E}\int_0^{T\wedge\tau_R^{n}}\big(\|X_t^{1,n}\|_{H_1}^2+\mathscr{L}_{X_t^{1,n}}(\|\cdot\|_{H_1}^2)\big)dt
\nonumber\\
\!\!\!\!\!\!\!\!&&
+C\mathbb{E}\int_0^{T\wedge\tau_R^{n}}\|Y_t^{2,n}\|_{H_2}^2dt+\mathbb{E}\big[\sup_{t\in[0,T\wedge\tau_R^{n}]}|N^{n}_t|\big]
\nonumber\\
\leq\!\!\!\!\!\!\!\!&&C_T(1+\|x\|_{H_1}^2+\|y\|_{H_2}^2)+C_T\int_0^T\mathbb{E}\|X_t^{1,n}\|_{H_1}^2dt+\frac{1}{2}\mathbb{E}\big[\sup_{t\in[0,T\wedge\tau_R^{n}]}\|X_t^{1,n}\|_{H_1}^2\big],
\end{eqnarray*}
where we used \eref{2} in the last step.

Rearranging the above inequality and taking $R\to\infty$, then by the monotone convergence theorem we have
\begin{eqnarray*}
\!\!\!\!\!\!\!\!&&\mathbb{E}\big[\sup_{t\in[0,T]}\|X_t^{1,n}\|_{H_1}^2\big]+2\theta\mathbb{E}\int_0^{T}\|X_t^{1,n}\|_{V_1}^{\alpha}dt
\nonumber\\
\leq\!\!\!\!\!\!\!\!&&C_T(1+\|x\|_{H_1}^2+\|y\|_{H_2}^2)+C_T\int_0^T\mathbb{E}\|X_t^{1,n}\|_{H_1}^2dt.
\end{eqnarray*}
Thus applying Gronwall's lemma gives that
\begin{eqnarray}\label{3}
\mathbb{E}\big[\sup_{t\in[0,T]}\|X_t^{1,n}\|_{H_1}^2\big]+2\theta\mathbb{E}\int_0^{T}\|X_t^{1,n}\|_{V_1}^{\alpha}dt
\leq C_T(1+\|x\|_{H_1}^2+\|y\|_{H_2}^2).
\end{eqnarray}
Recalling \eref{1} and following the same procedure as \eref{3}, it is easy to get that
\begin{eqnarray*}
\mathbb{E}\big[\sup_{t\in[0,T]}\|Y_t^{2,n}\|_{H_2}^2\big]+2\eta\mathbb{E}\int_0^{T}\|Y_t^{2,n}\|_{V_2}^{\beta}dt
\leq C_T(1+\|x\|_{H_1}^2+\|y\|_{H_2}^2),
\end{eqnarray*}
which completes the proof. \hspace{\fill}$\Box$
\end{proof}

Combining $({\mathbf{A}}{\mathbf{3}})$, $({\mathbf{A}}{\mathbf{4}})$, $({\mathbf{H}}{\mathbf{3}})$, $({\mathbf{H}}{\mathbf{4}})$ with Lemma \ref{l4}, it is easy to obtain the following estimates.
\begin{lemma}\label{l5}
Suppose $({\mathbf{A}}{\mathbf{3}})$, $({\mathbf{A}}{\mathbf{4}})$, $({\mathbf{H}}{\mathbf{3}})$ and $({\mathbf{H}}{\mathbf{4}})$. There exists a constant $C_T>0$ which is independent of $n$ such that
\begin{eqnarray*}
\!\!\!\!\!\!\!\!&&\|A_1(X_{\cdot}^{1,n},\mathscr{L}_{X_{\cdot}^{1,n}})\|_{K_1^*}+\|f(X_{\cdot}^{1,n},\mathscr{L}_{X_{\cdot}^{1,n}},Y_{\cdot}^{2,n})\|_{L^2([0,T]\times\Omega;H_1)}+\|B_1(X_{\cdot}^{1,n},\mathscr{L}_{X_{\cdot}^{1,n}})\|_{J_1}  \nonumber\\
\!\!\!\!\!\!\!\!&&~~~~~\leq C_T(1+\|x\|_{H_1}^2+\|y\|_{H_2}^2),
\end{eqnarray*}
and
\begin{eqnarray*}
\!\!\!\!\!\!\!\!&&\|A_2(X_{\cdot}^{1,n},\mathscr{L}_{X_{\cdot}^{1,n}},Y_{\cdot}^{2,n})\|_{K_2^*}+\|B_1(X_{\cdot}^{1,n},\mathscr{L}_{X_{\cdot}^{1,n}},Y_{\cdot}^{2,n})\|_{J_2}  \leq C_T(1+\|x\|_{H_1}^2+\|y\|_{H_2}^2),
\end{eqnarray*}
for all $n\geq1$.
\end{lemma}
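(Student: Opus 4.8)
The plan is to estimate the five norms one at a time, in each case by first invoking a \emph{pointwise} growth estimate on the relevant coefficient and then integrating over $[0,T]\times\Omega$, the resulting integrals all being controlled by the uniform-in-$n$ bounds of Lemma \ref{l4}. The observation that makes this routine is that the distribution-dependent terms introduce nothing new: since $\mathscr{L}_{X^{1,n}_t}$ is the law of $X^{1,n}_t$, one has $\mathscr{L}_{X^{1,n}_t}(\|\cdot\|_{H_1}^2)=\EE\|X^{1,n}_t\|_{H_1}^2$, a deterministic quantity already estimated in Lemma \ref{l4}. Thus every appearance of $\mu(\|\cdot\|_{H_1}^2)$ will simply be replaced by the second moment $\EE\|X^{1,n}_t\|_{H_1}^2$.

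I would first handle $A_1$ and $A_2$, for which growth is given outright. By $({\mathbf{A}}{\mathbf{4}})$,
\[
\EE\int_0^T\big\|A_1(X^{1,n}_t,\mathscr{L}_{X^{1,n}_t})\big\|_{V_1^*}^{\frac{\alpha}{\alpha-1}}\,dt
\le c_1\Big(T+\EE\int_0^T\|X^{1,n}_t\|_{V_1}^{\alpha}\,dt+\int_0^T\EE\|X^{1,n}_t\|_{H_1}^2\,dt\Big),
\]
and the right-hand side is $\le C_T(1+\|x\|_{H_1}^2+\|y\|_{H_2}^2)$ once $\|X^{1,n}\|_{K_1}$ and $\EE\sup_{t}\|X^{1,n}_t\|_{H_1}^2$ from Lemma \ref{l4} are inserted; taking the $\frac{\alpha-1}{\alpha}$ power gives the bound for $\|A_1(\cdot)\|_{K_1^*}$. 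The term $\|A_2(\cdot)\|_{K_2^*}$ is treated identically via $({\mathbf{H}}{\mathbf{4}})$, the key integral $\EE\int_0^T\|Y^{2,n}_t\|_{V_2}^{\beta}\,dt=\|Y^{2,n}\|_{K_2}^{\beta}$ being controlled by Lemma \ref{l4}.

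For $f$, $B_1$ and $B_2$ a linear-growth estimate is needed, which is not stated explicitly but follows from the Lipschitz parts of $({\mathbf{A}}{\mathbf{2}})$ and $({\mathbf{H}}{\mathbf{2}})$ by fixing a reference point and using $\mathbb{W}_{2,H_1}(\mu,\delta_0)^2=\mu(\|\cdot\|_{H_1}^2)$: e.g. $\|B_1(u,\mu)\|_{L_2(U_1,H_1)}^2\le C\big(1+\|u\|_{H_1}^2+\mu(\|\cdot\|_{H_1}^2)\big)$ and analogously $\|f(u,\mu,v)\|_{H_1}^2\le C\big(1+\|u\|_{H_1}^2+\|v\|_{H_2}^2+\mu(\|\cdot\|_{H_1}^2)\big)$ and $\|B_2(u,\mu,v)\|_{L_2(U_2,H_2)}^2\le C\big(1+\|u\|_{H_1}^2+\|v\|_{H_2}^2+\mu(\|\cdot\|_{H_1}^2)\big)$. (Alternatively the bounds on $B_1,B_2$ can be read off directly from the coercivity inequalities $({\mathbf{A}}{\mathbf{3}})$, $({\mathbf{H}}{\mathbf{3}})$ after moving the dual-pairing to the other side and dominating it through $({\mathbf{A}}{\mathbf{4}})$, $({\mathbf{H}}{\mathbf{4}})$ and Young's inequality.) Integrating these over $[0,T]\times\Omega$, replacing the law-terms by second moments, and applying the $H_1$- and $H_2$-estimates of Lemma \ref{l4} yields the $L^2([0,T]\times\Omega;H_1)$, $J_1$ and $J_2$ bounds simultaneously. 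Since none of these steps is delicate, there is no real obstacle; the only matters requiring care are the bookkeeping of exponents — matching the $\frac{\alpha}{\alpha-1}$-power of $({\mathbf{A}}{\mathbf{4}})$ against the $\alpha$-th power controlled in $K_1$, and likewise $\beta$ against $K_2$ — and confirming that the measure arguments contribute nothing beyond the moments already furnished by Lemma \ref{l4}.
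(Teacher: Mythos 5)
Your proposal is correct and is exactly the argument the paper intends: the paper offers no written proof, stating only that the lemma follows by combining $({\mathbf{A}}{\mathbf{3}})$, $({\mathbf{A}}{\mathbf{4}})$, $({\mathbf{H}}{\mathbf{3}})$, $({\mathbf{H}}{\mathbf{4}})$ with Lemma \ref{l4}, and your write-up supplies precisely those routine details — the growth conditions for $A_1,A_2$ integrated against the $K_1,K_2$ bounds, and linear-growth bounds for $f,B_1,B_2$ fed into the second-moment estimates, with $\mathscr{L}_{X^{1,n}_t}(\|\cdot\|_{H_1}^2)=\EE\|X^{1,n}_t\|_{H_1}^2$. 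Your observation that the linear growth of $f$ (and of $B_1,B_2$, if one does not extract it from the coercivity conditions) is not literally implied by the four hypotheses named in the lemma but follows from the Lipschitz parts of $({\mathbf{A}}{\mathbf{2}})$, $({\mathbf{H}}{\mathbf{2}})$ via $\mathbb{W}_{2,H_1}(\mu,\delta_0)^2=\mu(\|\cdot\|_{H_1}^2)$ is accurate and matches the paper's implicit standing use of Hypotheses \ref{h1}--\ref{h2} in full.
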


\noindent
\textbf{Proof of Theorem \ref{Th1}:} Due to the reflexivity of $J_i,K_i,K_i^*$ and $L^2([0,T]\times\Omega,dt\times\mathbb{P};H_i)$, $i=1,2$, there exist common subsequences $n_k$ such that for $k\rightarrow\infty$,
\begin{eqnarray*}
\!\!\!\!\!\!\!\!&&(i)~X^{1,n_k}\rightarrow\bar{X}~\text{weakly in~} K_1~\text{and weakly in}~L^2([0,T]\times\Omega,dt\times\mathbb{P};H_1),
\nonumber\\
\!\!\!\!\!\!\!\!&&(ii)~Y^{2,n_k}\rightarrow\bar{Y}~\text{weakly in~} K_2~\text{and weakly in}~L^2([0,T]\times\Omega,dt\times\mathbb{P};H_2),
\nonumber\\
\!\!\!\!\!\!\!\!&&(iii)~A_1(X^{1,n_k}_{\cdot},\mathscr{L}_{X^{1,n_k}_{\cdot}})\rightarrow \bar{F}^1~\text{weakly in~} K_1^*,
\nonumber\\
\!\!\!\!\!\!\!\!&&(iv)~A_2(X^{1,n_k}_{\cdot},\mathscr{L}_{X^{1,n_k}_{\cdot}},Y^{2,n_k}_{\cdot})\rightarrow \bar{F}^2~\text{weakly in~} K_2^*,
\nonumber\\
\!\!\!\!\!\!\!\!&&(v)~f(X^{1,n_k}_{\cdot},\mathscr{L}_{X^{1,n_k}_{\cdot}},Y^{2,n_k}_{\cdot})\rightarrow \hat{f}~\text{weakly in~} L^2([0,T]\times\Omega,dt\times\mathbb{P};H_1),
\nonumber\\
\!\!\!\!\!\!\!\!&&(vi)~B_1(X^{1,n_k}_{\cdot},\mathscr{L}_{X^{1,n_k}_{\cdot}})\rightarrow \bar{Z}^1~\text{weakly in~} J_1,
\nonumber\\
\!\!\!\!\!\!\!\!&&(vii)~B_2(X^{1,n_k}_{\cdot},\mathscr{L}_{X^{1,n_k}_{\cdot}},Y^{2,n_k}_{\cdot})\rightarrow \bar{Z}^2~\text{weakly in~} J_2.
\end{eqnarray*}

Due to $\alpha\geq2$, it is obvious that
$$f(X^{1,n_k}_{\cdot},\mathscr{L}_{X^{1,n_k}_{\cdot}},Y^{2,n_k}_{\cdot})\rightarrow \hat{f}~\text{weakly in~}K_1^*.$$
Since the bounded linear operator between two Banach space is weakly continuous, it leads to $\int_0^{\cdot}\Pi^{n_k}_1B_1(X^{1,n_k}_{s},\mathscr{L}_{X^{1,n_k}_{s}})dW^{1,n_k}_s\to\int_0^{\cdot}\bar{Z}^1_s\mathrm{d}W^1_s$ weakly in $\mathcal{M}_T^2(H_1)$ (the space of all continuous square integrable martingales from $[0,T]\times\Omega$ to $H_1$), and analogously for $\int_0^{\cdot}\Pi^{n_k}_2B_2(X^{1,n_k}_{s},\mathscr{L}_{X^{1,n_k}_{s}},Y^{2,n_k}_s)dW^{2,n_k}_s$. In addition, the approximants are progressively measurable, it follows that all of the above limits are progressively measurable.

Note that  $V_i$, $i=1,2$, are separable, by the definition of $(X^{1,n_k},Y^{2,n_k})$ that for any $\bar{e}\in V_1$ and $\tilde{e}\in V_2$, $dt\times\mathbb{P}$-a.e.
\begin{eqnarray*}
\left\{ \begin{aligned}
&_{V_1^*}\langle\bar{X}_t,\bar{e}\rangle_{V_1}={}_{V_1^*}\langle x,\bar{e}\rangle_{V_1}+\int_0^t{}_{V_1^*}\langle\bar{F}^1_s,\bar{e}\rangle_{V_1}ds
+\int_0^t\langle \hat{f}_s,\bar{e}\rangle_{H_1}ds
+\int_0^t\langle \bar{Z}^1_sdW^1_s,\bar{e}\rangle_{H_1},\\
&_{V_2^*}\langle\bar{Y}_t,\tilde{e}\rangle_{V_2}={}_{V_2^*}\langle y,\tilde{e}\rangle_{V_2}+\int_0^t{}_{V_2^*}\langle\bar{F}^2_s,\tilde{e}\rangle_{V_2}ds
+\int_0^t\langle \bar{Z}^2_sdW^2_s,\tilde{e}\rangle_{H_2},
\end{aligned} \right.
\end{eqnarray*}
Let us define
\begin{eqnarray*}
\left\{ \begin{aligned}
&X^1_t=x+\int_0^t\bar{F}^1_sds+\int_0^t \hat{f}_sds+\int_0^t\bar{Z}^1_sdW^1_s,~t\in[0,T],\\
&Y^2_t=y+\int_0^t\bar{F}^2_sds+\int_0^t\bar{Z}^2_sdW^2_s,~t\in[0,T],
\end{aligned} \right.
\end{eqnarray*}
then it gives that $X^1=\bar{X}$, $Y^2=\bar{Y}$, $dt\times\mathbb{P}$-a.e. According to Lemma \ref{l4} and \cite[Theorem 4.2.5]{LR1} that $(X^1,Y^2)$ is a continuous $H_1\times H_2$-valued $(\mathscr{F}_t)$-adapted process.
Thus, it suffices to prove that $dt\times\mathbb{P}$-a.e.
\begin{equation}\label{lim1}
\bar{F}^1=A_1(X^{1}_{\cdot},\mathscr{L}_{X^{1}_{\cdot}}),~\hat{f}=f(X^{1}_{\cdot},\mathscr{L}_{X^{1}_{\cdot}},Y^{2}_{\cdot}),~\bar{F}^2=A_2(X^{1}_{\cdot},\mathscr{L}_{X^{1}_{\cdot}},Y^{2}_{\cdot}),
\end{equation}
and
\begin{equation}\label{lim2}
\bar{Z}^1=B_1(X^{1}_{\cdot},\mathscr{L}_{X^{1}_{\cdot}}),~\bar{Z}^2=B_2(X^{1}_{\cdot},\mathscr{L}_{X^{1}_{\cdot}},Y^{2}_{\cdot}),
\end{equation}
which implies the existence of solutions to system \eref{e2}.

From $(i)$-$(vii)$ above, it is easy to get the corresponding convergence for $\Theta^{n_k}$, ${A}(\Theta^{n_k},\mathscr{L}_{\Theta^{n_k}})$ and $B(\Theta^{n_k},\mathscr{L}_{\Theta^{n_k}})$, respectively. For instance,
$${A}(\Theta^{n_k},\mathscr{L}_{\Theta^{n_k}})\to\bar{F}:=(\bar{F}^1+\hat{f},\bar{F}^2)~\text{weakly in}~K_1^*\times K_2^*,~\text{as}~k\to\infty.$$
By \eref{lim1} and \eref{lim2}, it suffices to show that $dt\times\mathbb{P}$-a.e.
\begin{equation}\label{lim3a}
{A}(\Theta_{\cdot},\mathscr{L}_{\Theta_{\cdot}})=\bar{F}~\text{and}~B(\Theta_{\cdot},\mathscr{L}_{\Theta_{\cdot}})=\bar{Z}:=diag(\bar{Z}^1,\bar{Z}^2).
\end{equation}
Combining \eref{mon}, \eref{Lip} with $({\mathbf{A}}{\mathbf{1}})$ and $({\mathbf{H}}{\mathbf{1}})$, \eref{lim3a} follows from the monotonicity arguments, we include the details here for completeness.

Take any non-negative $\psi\in L^\infty([0,T],dt;\mathbb{R})$, by H\"{o}lder's inequality, we have
\begin{eqnarray*}
\!\!\!\!\!\!\!\!&&\mathbb{E}\left[\int_0^T\psi_t\|\Theta_t\|_{\mathcal{H}}^2dt\right]=\lim_{k\to\infty}\mathbb{E}\left[\int_0^T\langle\psi_t\Theta_t,\Theta^{n_k}_t\rangle_{\mathcal{H}}dt\right]
\nonumber\\
\!\!\!\!\!\!\!\!&&~~\leq\left[\mathbb{E}\left(\int_0^T\psi_t\|\Theta_t\|_{\mathcal{H}}^2dt\right)\right]^{1/2}\liminf_{k\to\infty}\left[\mathbb{E}\left(\int_0^T\psi_t\|\Theta^{n_k}_t\|_{\mathcal{H}}^2dt\right)\right]^{1/2},
\end{eqnarray*}
which implies the following lower semi-continuity
\begin{eqnarray}\label{4}
\mathbb{E}\left[\int_0^T\psi_t\|\Theta_t\|_{\mathcal{H}}^2dt\right]\leq\liminf_{k\to\infty}\mathbb{E}\left[\int_0^T\psi_t\|\Theta^{n_k}_t\|_{\mathcal{H}}^2dt\right].
\end{eqnarray}

For any $\phi:=(\phi_1,\phi_2)\in K_1\times K_2$ and $\lambda\geq 0$, It\^{o}'s formula yields that
\begin{eqnarray}\label{5}
\!\!\!\!\!\!\!\!&&\mathbb{E}\Big[e^{-\lambda t}\|\Theta^{n_k}_t\|_{\mathcal{H}}^2\Big]-\|\Theta^{n_k}_0\|_{\mathcal{H}}^2
\nonumber\\
\leq\!\!\!\!\!\!\!\!&&\mathbb{E}\Big[\int_0^te^{-\lambda s}\Big(2{}_{\mathcal{V^*}}\langle {A}(\Theta^{n_k}_{s},\mathscr{L}_{\Theta^{n_k}_{s}}), \Theta^{n_k}_{s}\rangle_{\mathcal{V}}+\|B(\Theta^{n_k}_{s},\mathscr{L}_{\Theta^{n_k}_{s}})\|_{L_2(\mathcal{U},\mathcal{H})}^2-\lambda\|\Theta^{n_k}_{s}\|_{\mathcal{H}}^2\Big)ds\Big]
\nonumber\\
=\!\!\!\!\!\!\!\!&&\mathbb{E}\Big[\int_0^te^{-\lambda s}\Big(2{}_{\mathcal{V^*}}\langle {A}(\Theta^{n_k}_{s},\mathscr{L}_{\Theta^{n_k}_{s}})-{A}(\phi_{s},\mathscr{L}_{\phi_{s}}), \Theta^{n_k}_{s}-\phi_s\rangle_{\mathcal{V}}
\nonumber\\
\!\!\!\!\!\!\!\!&&~~~~~~~~~~~~~~~+\|B(\Theta^{n_k}_{s},\mathscr{L}_{\Theta^{n_k}_{s}})-B(\phi_s,\mathscr{L}_{\phi_s})\|_{L_2(\mathcal{U},\mathcal{H})}^2-\lambda\|\Theta^{n_k}_{s}-\phi_s\|_{\mathcal{H}}^2\Big)ds\Big]
\nonumber\\
\!\!\!\!\!\!\!\!&&+\mathbb{E}\Big[\int_0^te^{-\lambda s}\Big(2{}_{\mathcal{V^*}}\langle{A}(\phi_s,\mathscr{L}_{\phi_s}),\Theta^{n_k}_{s}\rangle_{\mathcal{V}}+2{}_{\mathcal{V^*}}\langle A(\Theta^{n_k}_{s},\mathscr{L}_{\Theta^{n_k}_{s}})-{A}(\phi_s,\mathscr{L}_{\phi_s}),\phi_s\rangle_{\mathcal{V}}
\nonumber\\
\!\!\!\!\!\!\!\!&&~~~~~~~~~~~~~~~+2\langle B(\Theta^{n_k}_{s},\mathscr{L}_{\Theta^{n_k}_{s}}),B(\phi_s,\mathscr{L}_{\phi_s})\rangle_{L_2(\mathcal{U},\mathcal{H})}-\|B(\phi_s,\mathscr{L}_{\phi_s})\|_{L_2(\mathcal{U},\mathcal{H})}^2
\nonumber\\
\!\!\!\!\!\!\!\!&&~~~~~~~~~~~~~~~-2\lambda\langle\Theta^{n_k}_{s},\phi_s\rangle_{\mathcal{H}}+\lambda\|\phi_s\|_{\mathcal{H}}^2\Big)ds\Big].
\end{eqnarray}
By \eref{mon} and \eref{Lip}, it is easy to find a constant $c>0$ and take $\lambda=c$ such that
\begin{eqnarray}\label{6}
\!\!\!\!\!\!\!\!&&\mathbb{E}\Big[\int_0^te^{-\lambda s}\Big(2{}_{\mathcal{V^*}}\langle {A}(\Theta^{n_k}_{s},\mathscr{L}_{\Theta^{n_k}_{s}})-{A}(\phi_{s},\mathscr{L}_{\phi_{s}}), \Theta^{n_k}_{s}-\phi_s\rangle_{\mathcal{V}}
\nonumber\\
\!\!\!\!\!\!\!\!&&~~~~~~~~~~~~~~~+\|B(\Theta^{n_k}_{s},\mathscr{L}_{\Theta^{n_k}_{s}})-B(\phi_s,\mathscr{L}_{\phi_s})\|_{L_2(\mathcal{U},\mathcal{H})}^2-\lambda\|\Theta^{n_k}_{s}-\phi_s\|_{\mathcal{H}}^2\Big)ds\Big]
\nonumber\\
\leq\!\!\!\!\!\!\!\!&&\mathbb{E}\Big\{\int_0^te^{-\lambda s}\Big[c\Big(\|\Theta^{n_k}_{s}-\phi_{s}\|_{\mathcal{H}}^2+\mathbb{W}_{2,\mathcal{H}}(\mathscr{L}_{\Theta^{n_k}_{s}},\mathscr{L}_{\phi_s})^2\Big)-\lambda\|\Theta^{n_k}_{s}-\phi_s\|_{\mathcal{H}}^2\Big]ds\Big\}
\nonumber\\
=\!\!\!\!\!\!\!\!&&0.
\end{eqnarray}
Inserting (\ref{6}) into (\ref{5}), by the lower semi-continuity (\ref{4}) we get
\begin{eqnarray}\label{7}
\!\!\!\!\!\!\!\!&&\mathbb{E}\left[\int_0^T\psi_t\Big(e^{-\lambda t}\|\Theta_t\|_{\mathcal{H}}^2-\|\Theta_0\|_{\mathcal{H}}^2\Big)dt\right]
\nonumber\\
\leq\!\!\!\!\!\!\!\!&&\liminf_{k\to\infty}\mathbb{E}\left[\int_0^T\psi_t\Big(e^{-\lambda t}\|\Theta^{n_k}_t\|_{\mathcal{H}}^2-\|\Theta_0\|_{\mathcal{H}}^2\Big)dt\right]
\nonumber\\
\leq\!\!\!\!\!\!\!\!&&\mathbb{E}\Big\{\int_0^T\psi_t\Big[\int_0^te^{-\lambda s}\Big(2{}_{\mathcal{V^*}}\langle{A}(\phi_s,\mathscr{L}_{\phi_s}),\Theta_{s}\rangle_{\mathcal{V}}+2{}_{\mathcal{V^*}}\langle \bar{F}_s-{A}(\phi_s,\mathscr{L}_{\phi_s}),\phi_s\rangle_{\mathcal{V}}
\nonumber\\
\!\!\!\!\!\!\!\!&&~~~~~~~~~~~~~~~+2\langle \bar{Z}_s,B(\phi_s,\mathscr{L}_{\phi_s})\rangle_{L_2(\mathcal{U},\mathcal{H})}-\|B(\phi_s,\mathscr{L}_{\phi_s})\|_{L_2(\mathcal{U},\mathcal{H})}^2
\nonumber\\
\!\!\!\!\!\!\!\!&&~~~~~~~~~~~~~~~-2\lambda\langle\Theta_{s},\phi_s\rangle_{\mathcal{H}}+\lambda\|\phi_s\|_{\mathcal{H}}^2\Big)ds\Big]dt\Big\}.
\end{eqnarray}
Applying It\^{o}'s formula and the product rule,
\begin{eqnarray}\label{16}
\!\!\!\!\!\!\!\!&&\mathbb{E}\left[e^{-\lambda t}\|\Theta_t\|_{\mathcal{H}}^2\right]-\|\Theta_0\|_{\mathcal{H}}^2
\nonumber\\
=\!\!\!\!\!\!\!\!&&\mathbb{E}\left[\int_0^te^{-\lambda s}\left(2{}_{\mathcal{V^*}}\langle {A}(\Theta_{s},\mathscr{L}_{\Theta_{s}}), \Theta_{s}\rangle_{\mathcal{V}}+\|B(\Theta_{s},\mathscr{L}_{\Theta_{s}})\|_{L_2(\mathcal{U},\mathcal{H})}^2-\lambda\|\Theta_{s}\|_{\mathcal{H}}^2\right)ds\right].
\end{eqnarray}
Inserting (\ref{16}) into (\ref{7}) and rearranging it implies
\begin{eqnarray*}
\!\!\!\!\!\!\!\!&&\mathbb{E}\Big\{\int_0^T\psi_t\Big[\int_0^te^{-\lambda s}\Big(2{}_{\mathcal{V^*}}\langle\bar{F}_s-{A}(\phi_s,\mathscr{L}_{\phi_s}),\Theta_{s}-\phi_s\rangle_{\mathcal{V}}+\|\bar{Z}_s-B(\phi_s,\mathscr{L}_{\phi_s})\|_{L_2(\mathcal{U},\mathcal{H})}^2
\nonumber\\
\!\!\!\!\!\!\!\!&&~~~~~~~~~~~~~~~~~~~~~~~~~~-\lambda\|\Theta_s-\phi_s\|_{\mathcal{H}}^2\Big)ds\Big]dt\Big\}\leq 0.
\end{eqnarray*}
First, taking $\phi=\Theta$ implies that $B(\Theta_{\cdot},\mathscr{L}_{\Theta_{\cdot}})=\bar{Z}$.
Next, letting $\phi=\Theta-\eta\tilde{\phi}v$ for any $\eta>0$, $v\in \mathcal{V}$ and $\tilde{\phi}\in L^\infty([0,T]\times\Omega,dt\times\mathbb{P};\mathbb{R})$. It follows that
$$\mathbb{W}_{2,\mathcal{H}}(\mathscr{L}_{\Theta_{s}},\mathscr{L}_{\phi_s})^2\leq\mathbb{E}\|\eta\tilde{\phi}_sv\|_{\mathcal{H}}^2\leq\eta\|\tilde{\phi}\|_{\infty}^2\|v\|_{\mathcal{H}}^2\downarrow0,~\text{as}~\eta\downarrow0.$$
Then taking $\eta\to 0$ by dominated convergence theorem we have
\begin{eqnarray*}
\!\!\!\!\!\!\!\!&&\mathbb{E}\Big\{\int_0^T\psi_t\Big[\int_0^te^{-\lambda s}{}_{\mathcal{V^*}}\langle\bar{F}_s-{A}(\Theta_s,\mathscr{L}_{\Theta_s}),\tilde{\phi}_sv\rangle_{\mathcal{V}}ds\Big]dt\Big\}\leq 0.
\end{eqnarray*}
The converse follows by taking $\tilde{\phi}=-\tilde{\phi}$, which concludes ${A}(\Theta_{\cdot},\mathscr{L}_{\Theta_{\cdot}})=\bar{F}$.

The uniqueness of solutions to systems \eref{e2} follows from the It\^{o}'s formula, \eref{mon} and \eref{Lip} directly. Hence we complete the proof of Theorem \ref{Th1}.
\hspace{\fill}$\Box$

\section{Proof of Averaging principle} \label{Sec Proof of Thm1}
\setcounter{equation}{0}
 \setcounter{definition}{0}
In this section, we aim to prove that the slow component of system \eref{e1} strongly converges to the solution of the corresponding averaged equation, which is mainly based on the technique of Khasminskii time discretization. In particular, the corresponding convergence rate is also derived.
\subsection{Some apriori estimates for system (\ref{e1})}
We first give some uniform bounds with respect to $\vare\in (0,1)$ for the solutions $(X_{t}^{\varepsilon}, Y_{t}^{\vare})$ of  system \eref{e1}.
\begin{lemma} \label{PMY}
For any $T>0$, there exists a constant $C_{T}>0$ such that,
\begin{align}
\sup_{\varepsilon\in(0,1)}\mathbb{E}\left(\sup_{t\in[0,T]}\|X_{t}^{\vare}\|_{H_1}^{4}\right)+\sup_{\varepsilon\in(0,1)}\EE\left(\int^T_0\|X_{t}^{\vare}\|_{V_1}^\alpha dt\right)\leq C_{T}\left(1+\|x\|_{H_1}^4+\|y\|_{H_2}^4\right)\label{F3.1}
\end{align}
and
\begin{align}\label{8}
\sup_{\varepsilon\in(0,1)}\sup_{t\in[0, T]}\mathbb{E}\|Y_{t}^{\varepsilon}\|_{H_2}^{4}\leq C_{T}\left(1+\|x\|_{H_1}^4+\|y\|_{H_2}^4\right).
\end{align}
\end{lemma}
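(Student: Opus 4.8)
The two estimates are coupled, and the only genuinely new feature compared with the single–scale variational theory of \cite{LR1} is the presence of the factors $1/\vare$ and $1/\sqrt{\vare}$ in the fast equation. The plan is therefore to first convert the monotonicity/coercivity of $A_2,B_2$ into a \emph{strict dissipativity} for the fast component, and then to exploit the $1/\vare$ weight in front of this dissipation to produce bounds that are uniform in $\vare$. Throughout I would carry out the computations on the Galerkin approximations $(X^{1,n},Y^{2,n})$ used in the existence proof, on which It\^o's formula is classical, obtain bounds uniform in both $n$ and $\vare$, and pass to the limit by weak lower semicontinuity and Fatou (equivalently, apply the variational It\^o formula directly to the solution).

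First I would establish, for all $u\in H_1$, $\mu\in\mathscr{P}_2(H_1)$ and $v\in V_2$, the dissipative estimate
\[
2{}_{V_2^*}\langle A_2(u,\mu,v),v\rangle_{V_2}+\|B_2(u,\mu,v)\|_{L_2(U_2,H_2)}^2\le -c_0\|v\|_{H_2}^2-\tfrac{\eta}{2}\|v\|_{V_2}^{\beta}+C\big(1+\|u\|_{H_1}^2+\mu(\|\cdot\|_{H_1}^2)\big)
\]
with $c_0>0$. This follows by taking $v_2=0$ in the strict monotonicity $(\mathbf{H2})$ (which gives $-2\kappa\|v\|_{H_2}^2$), using the $B_2$-Lipschitz bound of $(\mathbf{H2})$ to reintroduce at most $(1+\epsilon)L_{B_2}^2\|v\|_{H_2}^2$ into the diffusion term, forming a convex combination with the coercivity $(\mathbf{H3})$ to also dominate $\|v\|_{V_2}^\beta$, and using the growth bound $(\mathbf{H4})$ together with Young's inequality to absorb the $A_2(u,\mu,0)$ and $B_2(u,\mu,0)$ contributions; strict negativity of $c_0$ is where $\kappa>2L_{B_2}^2$ enters. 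Applying It\^o to $\|Y^{\vare}_t\|_{H_2}^4$, the whole deterministic part carries the factor $1/\vare$, and the leading coefficient of $\|Y^{\vare}_t\|_{H_2}^4$ is $(-2c_0+6L_{B_2}^2+\epsilon)/\vare$, the dangerous contribution $\sim 6L_{B_2}^2\|Y^{\vare}_t\|_{H_2}^4$ coming from the quadratic variation; this stays strictly negative under $\kappa>2L_{B_2}^2$. Writing $g(t)=\mathbb{E}\|Y^{\vare}_t\|_{H_2}^4$, taking expectations and absorbing lower–order terms via Young yields $g'(t)\le \vare^{-1}\big(-c\,g(t)+C(1+\mathbb{E}\|X^{\vare}_t\|_{H_1}^4)\big)$, and the variation–of–constants formula together with the crucial bound $\vare^{-1}\int_0^t e^{-c(t-s)/\vare}\,ds\le 1/c$ gives the $\vare$-uniform estimate $\sup_{s\le t}\mathbb{E}\|Y^{\vare}_s\|_{H_2}^4\le C\big(1+\|y\|_{H_2}^4\big)+C\sup_{s\le t}\mathbb{E}\|X^{\vare}_s\|_{H_1}^4$.

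For the slow component I would apply It\^o to $\|X^{\vare}_t\|_{H_1}^4$, keep the good term $-\theta\|X^{\vare}_t\|_{H_1}^2\|X^{\vare}_t\|_{V_1}^\alpha\le0$ from $(\mathbf{A3})$, and control the $f$-drift through the linear-growth consequence of $(\mathbf{A2})$, namely $\|f(X^{\vare}_t,\mathscr{L}_{X^{\vare}_t},Y^{\vare}_t)\|_{H_1}\le C\big(1+\|X^{\vare}_t\|_{H_1}+\|Y^{\vare}_t\|_{H_2}+(\mathbb{E}\|X^{\vare}_t\|_{H_1}^2)^{1/2}\big)$, splitting the cross term by Young into $\|X^{\vare}_t\|_{H_1}^4$, $\|Y^{\vare}_t\|_{H_2}^4$ and $\mathbb{E}\|X^{\vare}_t\|_{H_1}^4$ contributions. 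Here there is no $1/\vare$ factor, so the Burkholder--Davis--Gundy inequality handles the running supremum in the standard way, giving $\mathbb{E}\sup_{s\le t}\|X^{\vare}_s\|_{H_1}^4\le C(1+\|x\|_{H_1}^4)+C\int_0^t\mathbb{E}\sup_{r\le s}\|X^{\vare}_r\|_{H_1}^4\,ds+C\int_0^t\mathbb{E}\|Y^{\vare}_s\|_{H_2}^4\,ds$. Inserting the fast bound from the previous paragraph, using $\sup_r\mathbb{E}\le\mathbb{E}\sup_r$, and applying Gronwall closes the $X$-estimate uniformly in $\vare$; back-substitution then yields \eqref{8}, while the $\mathbb{E}\int_0^T\|X^{\vare}_t\|_{V_1}^\alpha\,dt$ bound in \eqref{F3.1} comes from retaining the $-\theta\int_0^T\|X^{\vare}_t\|_{V_1}^\alpha\,dt$ term in the analogous second-moment computation, closed by the same coupling.

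The main obstacle is the $\vare$-uniformity of the fast estimate: one must verify that the net dissipation rate remains strictly negative \emph{at the fourth-moment level} once the quadratic-variation term is accounted for (this is precisely the role of $\kappa>2L_{B_2}^2$), and then convert the $1/\vare$-scaled dissipation into an $\vare$-independent constant through the exponential-kernel bound, so that the slow–fast coupling closes with no constant blowing up as $\vare\to0$. The remaining steps---the $f$-drift splitting, the BDG estimate for $X^{\vare}$, and the Gronwall closures---are routine, and the only additional technical care is the justification of It\^o's formula in the variational framework, which is handled at the Galerkin level before passing to the limit.
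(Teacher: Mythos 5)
Your proposal is correct and follows essentially the same route as the paper: the key dissipativity estimate $2{}_{V_2^*}\langle A_2(u,\mu,v),v\rangle_{V_2}+\|B_2(u,\mu,v)\|_{L_2(U_2,H_2)}^2\le -\lambda\|v\|_{H_2}^2+C\big(1+\|u\|_{H_1}^2+\mu(\|\cdot\|_{H_1}^2)\big)$ (the paper's Eq.~(4.3), obtained via the calculation of \cite[Lemma 4.3.8]{LR1}), a fourth-moment It\^{o} computation for $Y^{\vare}$ in which the quadratic-variation term is dominated thanks to $\kappa>2L_{B_2}^2$ and the $1/\vare$-scaled dissipation is converted into an $\vare$-uniform bound through the exponential kernel (the paper keeps the convolution and applies Fubini where you bound it by $\sup_s\mathbb{E}\|X_s^{\vare}\|_{H_1}^4$ directly, an equivalent step), followed by the BDG--Gronwall closure for $X^{\vare}$, back-substitution for \eqref{8}, and a second-moment computation for the $\mathbb{E}\int_0^T\|X_t^{\vare}\|_{V_1}^{\alpha}dt$ bound. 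The only differences are cosmetic (you justify It\^{o}'s formula at the Galerkin level while the paper applies it directly to the variational solution, and your numerical constants in the fourth-moment dissipation coefficient are slightly loose but harmless after sharper Young inequalities), so the argument is sound as proposed.
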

\begin{proof}
Applying It\^{o}'s formula for $\|Y^{\varepsilon}_t\|_{H_2}^4$, we have
\begin{eqnarray*}
\|Y^{\varepsilon}_t\|_{H_2}^4
=\!\!\!\!\!\!\!\!&&\|y\|_{H_2}^4+\frac{4}{\varepsilon}\int_0^t\|Y^{\varepsilon}_s\|_{H_2}^2{}_{V_2^*}\langle A_2(X^{\varepsilon}_s,\mathscr{L}_{X^{\varepsilon}_s},Y^{\varepsilon}_s),Y^{\varepsilon}_s\rangle_{V_2}ds
\nonumber\\
\!\!\!\!\!\!\!\!&&+\frac{4}{\varepsilon}\int_0^t\|B_2(X^{\varepsilon}_s,\mathscr{L}_{X^{\varepsilon}_s},Y^{\varepsilon}_s)^*Y^{\varepsilon}_s\|_{U_2}^2ds+\frac{2}{\varepsilon}\int_0^t\|Y^{\varepsilon}_s\|_{H_2}^2\|B_2(X^{\varepsilon}_s,\mathscr{L}_{X^{\varepsilon}_s},Y^{\varepsilon}_s)\|_{L_2(U_2,H_2)}^2ds
\nonumber\\
\!\!\!\!\!\!\!\!&&
+\frac{4}{\varepsilon}\int_0^t\|Y^{\varepsilon}_s\|_{H_2}^2\langle B_2(X^{\varepsilon}_s,\mathscr{L}_{X^{\varepsilon}_s},Y^{\varepsilon}_s)dW^2_s,Y^{\varepsilon}_s\rangle_{H_2}.
\end{eqnarray*}
Following the same calculations as in the proof of \cite[Lemma 4.3.8]{LR1}, by Hypothesis \ref{h2}, there is a constant $\lambda\in(0,\kappa)$ such that for any $u\in H_1,\mu\in\mathscr{P}_2(H_1)$ and $v\in V_2$,
\begin{eqnarray}\label{a2}
2{}_{V_2^*}\langle A_2(u,\mu,v),v\rangle_{V_2}+\|B_2(u,\mu,v)\|_{L_2(U_2,H_2)}^2\leq-\lambda\|v\|_{H_2}^2+C\left(1+\|u\|_{H_1}^2+\mu(\|\cdot\|_{H_1}^2)\right).
\end{eqnarray}
Taking expectation and differentiating with respect to $t$, by \eref{a2} we deduce that
\begin{eqnarray*}
\frac{d}{dt}\mathbb{E}\|Y^{\varepsilon}_t\|_{H_2}^4=\!\!\!\!\!\!\!\!&&\frac{4}{\varepsilon}\mathbb{E}\left(\|Y^{\varepsilon}_t\|_{H_2}^2{}_{V_2^*}\langle A_2(X^{\varepsilon}_t,\mathscr{L}_{X^{\varepsilon}_t},Y^{\varepsilon}_t),Y^{\varepsilon}_t\rangle_{V_2}\right) \nonumber \\
 \!\!\!\!\!\!\!\!&& +\frac{4}{\varepsilon}\mathbb{E}\left(\|B_2(X^{\varepsilon}_t,\mathscr{L}_{X^{\varepsilon}_t},Y^{\varepsilon}_t)^*Y^{\varepsilon}_t\|_{U_2}^2\right)
+\frac{2}{\varepsilon}\mathbb{E}\left(\|Y^{\varepsilon}_t\|_{H_2}^2\|B_2(X^{\varepsilon}_t,\mathscr{L}_{X^{\varepsilon}_t},Y^{\varepsilon}_t)\|_{L_2(U_2,H_2)}^2\right) \\
 \leq\!\!\!\!\!\!\!\!&&\frac{2}{\varepsilon}\mathbb{E}\left[\|Y^{\varepsilon}_t\|_{H_2}^2\left({2}_{V_2^*}\langle A_2(X^{\varepsilon}_t,\mathscr{L}_{X^{\varepsilon}_t},Y^{\varepsilon}_t),Y^{\varepsilon}_t\rangle_{V_2}+3\|B_2(X^{\varepsilon}_t,\mathscr{L}_{X^{\varepsilon}_t},Y^{\varepsilon}_t)\|_{L_2(U_2,H_2)}^2\right)\right] \\
\leq\!\!\!\!\!\!\!\!&&\frac{2}{\varepsilon}\mathbb{E}\left[\|Y^{\varepsilon}_t\|_{H_2}^2\left(-\lambda\|Y^{\varepsilon}_t\|_{H_2}^2+C\|X^{\varepsilon}_t\|_{H_1}^2+C\mathscr{L}_{X^{\varepsilon}_t}(\|\cdot\|_{H_1}^2)+C\right)\right] \\
\leq\!\!\!\!\!\!\!\!&&-\frac{2\lambda_0}{\varepsilon}\mathbb{E}\|Y_{t}^{\varepsilon}
\|_{H_2}^{4}+\frac{C}{\varepsilon}\EE\|X_{t}^{\varepsilon}\|_{H_1}^{4}
+\frac{C}{\varepsilon},\label{4.4.2}
\end{eqnarray*}
where $\lambda_0\in(0,\lambda)$ and we used the fact that $\mathscr{L}_{X^{\varepsilon}_t}(\|\cdot\|_{H_1}^2)=\EE\|X_{t}^{\varepsilon}\|_{H_1}^{2}$. Hence, by the comparison theorem, it is easy to see that
\begin{eqnarray}
\mathbb{E}\|Y_{t}^{\varepsilon}
\|_{H_2}^{4}\leq\!\!\!\!\!\!\!\!&&\|y\|_{H_2}^{4}e^{-\frac{2\lambda_0}{\varepsilon}t}+\frac{C}{\varepsilon}\int^t_0
e^{-\frac{2\lambda_0}{\varepsilon}(t-s)}\left(1+\EE\|X_{s}^{\varepsilon}\|_{H_1}^{4}\right)ds.\label{F3}
\end{eqnarray}
On the other hand, using It\^{o}'s formula again, we also have
\begin{eqnarray*}
\|X^{\varepsilon}_t\|_{H_1}^4
=\!\!\!\!\!\!\!\!&&\|x\|_{H_1}^4+4\int_0^t\|X^{\varepsilon}_s\|_{H_1}^2{}_{V_1^*}\langle A_1(X^{\varepsilon}_s,\mathscr{L}_{X^{\varepsilon}_s})+f(X^{\varepsilon}_s,\mathscr{L}_{X^{\varepsilon}_s},Y^{\varepsilon}_s),X^{\varepsilon}_s\rangle_{V_1}ds
\nonumber\\
\!\!\!\!\!\!\!\!&&+4\int_0^t\|B_1(X^{\varepsilon}_s,\mathscr{L}_{X^{\varepsilon}_s})^*X^{\varepsilon}_s\|_{U_1}^2ds+2\int_0^t\|X^{\varepsilon}_s\|_{H_1}^2\|B_1(X^{\varepsilon}_s,\mathscr{L}_{X^{\varepsilon}_s})\|_{L_2(U_1,H_1)}^2ds
\nonumber\\
\!\!\!\!\!\!\!\!&&
+4\int_0^t\|X^{\varepsilon}_s\|_{H_1}^2\langle B_1(X^{\varepsilon}_s,\mathscr{L}_{X^{\varepsilon}_s})dW^1_s,X^{\varepsilon}_s\rangle_{H_1}ds.
\end{eqnarray*}
Then by Burkholder-Davis-Gundy's inequality, \eref{F3} and Hypothesis \ref{h1}, it holds that
\begin{eqnarray*}
&&\mathbb{E}\left(\sup_{t\in[0, T]}\|X_{t}^{\vare}\|_{H_1}^{4}\right)+4\EE\left(\int^T_0\|X_{t}^{\vare}\|_{H_1}^{2}\|X_{t}^{\vare}\|_{V_1}^\alpha dt\right)\\
\leq\!\!\!\!\!\!\!\!&&{\|x\|_{H_1}^{4}}+C_T+C\int^T_0\mathbb{E}\|X_{t}^{\varepsilon}\|_{H_1}^{4}dt+C\int^T_0\mathbb{E}\| Y_{t}^{\varepsilon}\|_{H_2}^{4}dt\\
\leq\!\!\!\!\!\!\!\!&&C_{T}\left(1+\|x\|_{H_1}^4+\|y\|_{H_2}^4\right)+C\int^T_0\mathbb{E}\|X_{t}^{\varepsilon}\|_{H_1}^{4}dt
\\\!\!\!\!\!\!\!\!&&+\frac{C}{\vare}\int^T_0\int^t_0
e^{-\frac{2\lambda_0}{\varepsilon}(t-s)}\left(1+\EE\|X_{s}^{\varepsilon}\|_{H_1}^{4}\right)dsdt\\
\leq\!\!\!\!\!\!\!\!&&C_{T}\left(1+\|x\|_{H_1}^4+\|y\|_{H_2}^4\right)+C\int^T_0\mathbb{E}\|X_{t}^{\varepsilon}\|_{H_1}^{4}dt.
\end{eqnarray*}
Hence, applying Gronwall's inequality, we get
\begin{eqnarray}
\mathbb{E}\left(\sup_{t\in[0, T]}\|X_{t}^{\vare}\|_{H_1}^{4}\right)+\EE\left(\int^T_0\|X_{t}^{\vare}\|_{H_1}^{2}\|X_{t}^{\vare}\|_{V_1}^\alpha dt\right)
\leq\!\!\!\!\!\!\!\!&&C_{T}\left(1+\|x\|_{H_1}^4+\|y\|_{H_2}^4\right),\label{4.4.4}
\end{eqnarray}
which also gives
\begin{eqnarray*}
\mathbb{E}\|Y_{t}^{\varepsilon}\|_{H_2}^{4}\leq
C_{T}\left(1+\|x\|_{H_1}^4+\|y\|_{H_2}^4\right).\label{4.4.5}
\end{eqnarray*}
Moreover, applying It\^{o}'s formula to $\|X^{\varepsilon}_t\|_{H_1}^2,\|Y^{\varepsilon}_t\|_{H_2}^2$ and following the same procedure as (\ref{4.4.4}), it is obvious that
\begin{eqnarray*}
\EE\left(\int^T_0\|X_{t}^{\vare}\|_{V_1}^\alpha dt\right)\leq C_{T}\left(1+\|x\|_{H_1}^4+\|y\|_{H_2}^4\right).
\end{eqnarray*}

The proof is complete.\hspace{\fill}$\Box$
\end{proof}

\vspace{0.1cm}
The following Lemma is an estimate of the integral of the time increment of $X_{t}^{\varepsilon}$, which is weaker than the H\"{o}lder continuity of time (see e.g. \cite{DSXZ,FWL,FWLL}) but strong enough for our purpose, and the advantage is it only needs initial value $x\in H_1,y\in H_2$.
\begin{lemma} \label{COX}
For any $T>0$, there exists a constant $C_{T}>0$ such that for any $\vare\in(0,1)$ and $\delta>0$ small enough,
\begin{align}
\mathbb{E}\left[\int^{T}_0\|X_{t}^{\varepsilon}-X_{t(\delta)}^{\varepsilon}\|_{H_1}^2 dt\right]\leq C_{T}\delta(1+\|x\|_{H_1}^2+\|y\|_{H_2}^2),\label{F3.7}
\end{align}
where $t(\delta):=[\frac{t}{\delta}]\delta$ and $[s]$ denotes the
integer part of $s$.
\end{lemma}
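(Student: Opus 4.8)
The plan is to localise on the grid $\{k\delta:k=0,1,\dots\}$ and to run the variational It\^o formula for the Gelfand triple $V_1\subset H_1\subset V_1^*$ on each subinterval. Writing the slow equation of \eref{e1} in integral form on $[t(\delta),t]$ gives
$$X_t^{\vare}-X_{t(\delta)}^{\vare}=\int_{t(\delta)}^t\big[A_1(X_s^{\vare},\mathscr{L}_{X_s^{\vare}})+f(X_s^{\vare},\mathscr{L}_{X_s^{\vare}},Y_s^{\vare})\big]ds+\int_{t(\delta)}^tB_1(X_s^{\vare},\mathscr{L}_{X_s^{\vare}})dW_s^1,$$
and, applying It\^o's formula to $\|X_t^{\vare}-X_{t(\delta)}^{\vare}\|_{H_1}^2$ and taking expectations (so the stochastic integral drops out),
$$\mathbb{E}\|X_t^{\vare}-X_{t(\delta)}^{\vare}\|_{H_1}^2=\mathbb{E}\int_{t(\delta)}^t\Big[2{}_{V_1^*}\langle A_1(X_s^{\vare},\mathscr{L}_{X_s^{\vare}}),X_s^{\vare}-X_{t(\delta)}^{\vare}\rangle_{V_1}+2\langle f,X_s^{\vare}-X_{t(\delta)}^{\vare}\rangle_{H_1}+\|B_1\|_{L_2(U_1,H_1)}^2\Big]ds.$$

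The diffusion and reaction contributions are harmless. The linear growth of $B_1$ and $f$ forced by $(\mathbf{A2})$, combined with the fourth-moment bounds of Lemma \ref{PMY}, show that $\mathbb{E}\int_{t(\delta)}^t\|B_1\|_{L_2(U_1,H_1)}^2ds$ and $\mathbb{E}\int_{t(\delta)}^t\|f\|_{H_1}^2ds$ are each of size $O(\delta)$ on a window of length $\delta$, while Young's inequality splits $2\langle f,X_s^{\vare}-X_{t(\delta)}^{\vare}\rangle_{H_1}$ into such an $O(\delta)$ piece plus a self-term $\|X_s^{\vare}-X_{t(\delta)}^{\vare}\|_{H_1}^2$ to be absorbed at the end. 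The genuinely delicate term, and the one I expect to be the \emph{main obstacle}, is the drift pairing with $A_1$: since $A_1$ is only $V_1^*$-valued, the $H_1$-norm of $\int_{t(\delta)}^tA_1\,ds$ cannot be estimated directly and the bracket must be kept intact. I would add and subtract $A_1(X_{t(\delta)}^{\vare},\mathscr{L}_{X_{t(\delta)}^{\vare}})$: the monotonicity \eref{m} bounds the diagonal part ${}_{V_1^*}\langle A_1(X_s^{\vare})-A_1(X_{t(\delta)}^{\vare}),X_s^{\vare}-X_{t(\delta)}^{\vare}\rangle_{V_1}$ by $c_1\big(\|X_s^{\vare}-X_{t(\delta)}^{\vare}\|_{H_1}^2+\mathbb{W}_{2,H_1}(\mathscr{L}_{X_s^{\vare}},\mathscr{L}_{X_{t(\delta)}^{\vare}})^2\big)$, which is purely $H_1$-based and, via $\mathbb{W}_{2,H_1}(\mathscr{L}_{X_s^{\vare}},\mathscr{L}_{X_{t(\delta)}^{\vare}})^2\le\mathbb{E}\|X_s^{\vare}-X_{t(\delta)}^{\vare}\|_{H_1}^2$, feeds the same self-term; the residual anchor pairing is then controlled by the growth bound $(\mathbf{A4})$ and H\"older's inequality, leaving integrands dominated by $1+\|X_s^{\vare}\|_{V_1}^\alpha+\|X_{t(\delta)}^{\vare}\|_{V_1}^\alpha$.

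To conclude I would integrate the displayed identity in $t$ over each window $[k\delta,(k+1)\delta)$ and apply Fubini, which replaces $\int_{k\delta}^{(k+1)\delta}\!\int_{k\delta}^t ds\,dt$ by a weight bounded by $\delta$ and thereby factors out the single power of $\delta$; summing over the $\lfloor T/\delta\rfloor+1$ windows and invoking the a priori estimates of Lemma \ref{PMY}, in particular $\mathbb{E}\int_0^T\|X_s^{\vare}\|_{V_1}^\alpha ds\le C_T(1+\|x\|_{H_1}^2+\|y\|_{H_2}^2)$, bounds all the genuine (non-self) terms by $C_T\delta(1+\|x\|_{H_1}^2+\|y\|_{H_2}^2)$. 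A final Gronwall argument absorbs the accumulated self-terms $\int_0^T\mathbb{E}\|X_s^{\vare}-X_{s(\delta)}^{\vare}\|_{H_1}^2ds$ and yields \eref{F3.7}. The one point needing care is the $V_1^*$--$V_1$ pairing against the frozen endpoint $X_{t(\delta)}^{\vare}$, whose value at a fixed grid time need not a priori lie in $V_1$; this is cleanly legitimised by carrying out the whole estimate at the level of the Galerkin approximations $X^{1,n}$ of Section 3 (whose grid values lie in $H_1^n\subset V_1$) with all constants uniform in $n$, and then passing to the limit through the weak lower semicontinuity already exploited in \eref{4}.
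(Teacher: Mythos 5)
Your overall skeleton (windowed It\^o formula, Fubini to harvest one power of $\delta$, absorption of the self-terms for small $\delta$) is the right family of argument, and you correctly sensed that the $V_1^*$--$V_1$ pairing against the frozen value $X^{\vare}_{t(\delta)}$ is the crux. But your fix does not close the estimate: the Galerkin regularisation buys only the \emph{qualitative} fact $X^{1,n}_{k\delta}\in H_1^n\subset V_1$, not any quantitative control of the grid values that is uniform in $n$. Concretely, after your add--subtract of $A_1(X^{\vare}_{t(\delta)},\mathscr{L}_{X^{\vare}_{t(\delta)}})$ and H\"older's inequality, the anchor contribution summed over all windows has the form $\delta^2\sum_{k}\mathbb{E}\big(1+\|X^{\vare}_{k\delta}\|_{V_1}^{\alpha}\big)$ (coming both from $\|A_1(X^{\vare}_{k\delta},\mathscr{L}_{X^{\vare}_{k\delta}})\|_{V_1^*}^{\alpha/(\alpha-1)}$ via $({\mathbf{A}}{\mathbf{4}})$ and from $\|X^{\vare}_{s}-X^{\vare}_{k\delta}\|_{V_1}^{\alpha}$). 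You propose to bound the Riemann-sum quantity $\delta\sum_k\mathbb{E}\|X^{\vare}_{k\delta}\|_{V_1}^{\alpha}$ by $\mathbb{E}\int_0^T\|X^{\vare}_s\|_{V_1}^{\alpha}ds$ from Lemma \ref{PMY}, but no such inequality holds: for a process that is merely $L^{\alpha}([0,T];V_1)$ in time, values at finitely many \emph{fixed} times are not controlled by the time integral at all, and at the Galerkin level the uniform bound $\sup_n\mathbb{E}\int_0^T\|X^{1,n}_s\|_{V_1}^{\alpha}ds<\infty$ gives no bound on $\mathbb{E}\|X^{1,n}_{k\delta}\|_{V_1}^{\alpha}$, since norm equivalence on $H_1^n$ degenerates as $n\to\infty$. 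The same objection recurs if you skip the add--subtract and pair $A_1(X^{\vare}_s,\mathscr{L}_{X^{\vare}_s})$ directly against $X^{\vare}_s-X^{\vare}_{k\delta}$, because $\|X^{\vare}_{k\delta}\|_{V_1}$ reappears. (A secondary, fillable point: passing your estimate to the limit would also require fixed-time weak convergence $X^{1,n}_{k\delta}\rightharpoonup X_{k\delta}$, which does not follow from the lower semicontinuity in \eref{4} alone.)

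The paper avoids the fixed-grid anchor altogether. It first splits, as in \eref{F3.8}, $\|X^{\vare}_t-X^{\vare}_{t(\delta)}\|_{H_1}^2\le 2\|X^{\vare}_t-X^{\vare}_{t-\delta}\|_{H_1}^2+2\|X^{\vare}_{t(\delta)}-X^{\vare}_{t-\delta}\|_{H_1}^2$, and then runs It\^o's formula from the \emph{moving} anchor $t-\delta$ (and, for the second piece, between $t-\delta$ and $t(\delta)$). Since $t-\delta$ sweeps a continuum as $t$ is integrated over $[\delta,T]$, the anchor lies in $V_1$ for a.e.\ $t$, and --- decisively --- every occurrence of $\|X^{\vare}_{t-\delta}\|_{V_1}^{\alpha}$ or $\|A_1(X^{\vare}_s,\mathscr{L}_{X^{\vare}_s})\|_{V_1^*}^{\alpha/(\alpha-1)}$ sits under a $dt$-integral, so Fubini converts it into $\delta\,\mathbb{E}\int_0^T(1+\|X^{\vare}_s\|_{V_1}^{\alpha}+\cdots)ds$, exactly the quantity Lemma \ref{PMY} controls; see \eref{REGX1}. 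Note that no monotonicity of $A_1$ is used for this term at all --- $({\mathbf{A}}{\mathbf{4}})$, H\"older and Fubini suffice, so no Gronwall absorption is needed either. If you wish to salvage a grid-anchored scheme, you would have to average the anchor over a window (anchor at a generic $r$ and integrate in $r$), which is precisely the paper's $t-\delta$ device in disguise.
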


\begin{proof}
Using \eref{F3.1}, it is easy to get that
\begin{eqnarray}
&&\mathbb{E}\left[\int^{T}_0\|X_{t}^{\varepsilon}-X_{t(\delta)}^{\varepsilon}\|_{H_1}^2dt\right]\nonumber\\
=\!\!\!\!\!\!\!\!&& \mathbb{E}\left(\int^{\delta}_0\|X_{t}^{\varepsilon}-x\|_{H_1}^2dt\right)+\mathbb{E}\left[\int^{T}_{\delta}\|X_{t}^{\varepsilon}-X_{t(\delta)}^{\varepsilon}\|_{H_1}^2dt\right]\nonumber\\
\leq\!\!\!\!\!\!\!\!&& C\left(1+\|x\|_{H_1}^2+\|y\|_{H_2}^2\right)\delta \nonumber \\
 \!\!\!\!\!\!\!\!&& +2\mathbb{E}\left(\int^{T}_{\delta}\|X_{t}^{\varepsilon}-X_{t-\delta}^{\varepsilon}\|_{H_1}^2dt\right)+2\mathbb{E}\left(\int^{T}_{\delta}\|X_{t(\delta)}^{\varepsilon}-X_{t-\delta}^{\varepsilon}\|_{H_1}^2dt\right).\label{F3.8}
\end{eqnarray}
It follows from It\^{o}'s formula that
\begin{eqnarray}
\|X_{t}^{\varepsilon}-X_{t-\delta}^{\varepsilon}\|_{H_1}^{2}=\!\!\!\!\!\!\!\!&&2\int_{t-\delta} ^{t}{}_{V_1^*}\langle A_1(X^{\varepsilon}_s,\mathscr{L}_{X^{\varepsilon}_s}), X_{s}^{\varepsilon}-X_{t-\delta}^{\varepsilon}\rangle_{V_1} ds
\nonumber \\
 \!\!\!\!\!\!\!\!&&+ 2\int_{t-\delta} ^{t}{}_{V_1^*}\langle f(X^{\varepsilon}_s,\mathscr{L}_{X^{\varepsilon}_s},Y^{\varepsilon}_s), X_{s}^{\varepsilon}-X_{t-\delta}^{\varepsilon}\rangle_{V_1} ds\nonumber \\
 \!\!\!\!\!\!\!\!&& +\int_{t-\delta} ^{t}\|B_1(X^{\varepsilon}_s,\mathscr{L}_{X^{\varepsilon}_s})\|_{L_2(U_1,H_1)}^2ds
 +2\int_{t-\delta} ^{t}\langle B_1(X^{\varepsilon}_s,\mathscr{L}_{X^{\varepsilon}_s})dW^{1}_s, X_{s}^{\varepsilon}-X_{t-\delta}\rangle_{H_1} \nonumber\\
:=\!\!\!\!\!\!\!\!&&I_{1}(t)+I_{2}(t)+I_{3}(t)+I_{4}(t).  \label{F3.9}
\end{eqnarray}
For the first term $I_1(t)$, by condition $({\mathbf{A}}{\mathbf{4}})$, there exists a constant $C>0$ such that
\begin{eqnarray}  \label{REGX1}
&&\mathbb{E}\left(\int^{T}_{\delta}I_{1}(t)dt\right)\nonumber\\
\leq\!\!\!\!\!\!\!\!&& C\mathbb{E}\left(\int^{T}_{\delta}\int_{t-\delta} ^{t}\|A_1(X^{\varepsilon}_s,\mathscr{L}_{X^{\varepsilon}_s})\|_{V_1^*}
\|X_{s}^{\varepsilon}-X_{t-\delta}^{\varepsilon}\|_{V_1} ds dt\right)\nonumber\\
\leq\!\!\!\!\!\!\!\!&&C\left[\mathbb{E}\int^{T}_{\delta}\int_{t-\delta} ^{t}\|A_1(X^{\varepsilon}_s,\mathscr{L}_{X^{\varepsilon}_s})\|_{V_1^*}^{\alpha/(\alpha-1)}dsdt\right]^{(\alpha-1)/\alpha}
\left[\mathbb{E}\int^{T}_{\delta}\int_{t-\delta} ^{t}\|X_{s}^{\varepsilon}-X_{t-\delta}^{\varepsilon}\|^\alpha_{V_1} dsdt\right]^{1/\alpha}\nonumber\\
\leq\!\!\!\!\!\!\!\!&&C\left[\delta\mathbb{E}\int^{T}_0(1+\|X_{s}^{\varepsilon}\|_{V_1}^\alpha+\mathscr{L}_{X^{\varepsilon}_s}(\|\cdot\|_{H_1}^2))ds\right]^{(\alpha-1)/\alpha}
\cdot\left[\delta\mathbb{E}\int^{T}_0\|X_{s}^{\varepsilon}\|^\alpha_{V}ds\right]^{1/\alpha}\nonumber\\
\leq\!\!\!\!\!\!\!\!&&C_{T}\delta\left(1+\|x\|_{H_1}^2+\|y\|_{H_2}^2\right),
\end{eqnarray}
where we use Fubini's theorem and \eref{F3.1} in the third and fourth inequalities respectively.

For $I_{2}(t)$ and $I_3(t)$, by condition $({\mathbf{A}}{\mathbf{2}})$, \eref{F3.1} and (\ref{8}), we get
\begin{eqnarray}\label{REGX2}
&&\mathbb{E}\left(\int^{T}_{\delta}I_{2}(t)dt\right)\nonumber\\
\leq\!\!\!\!\!\!\!\!&&C\mathbb{E}\left[\int^{T}_{\delta}\int_{t-\delta} ^{t}\left(1+\|X_{s}^{\varepsilon}\|_{H_1}+\|Y_{s}^{\varepsilon}\|_{H_2}+\big(\mathscr{L}_{X^{\varepsilon}_s}(\|\cdot\|_{H_1}^2)\big)^{1/2}\right)\left(\|X_{s}^{\varepsilon}\|_{H_1}+\|X_{t-\delta}^{\varepsilon}\|_{H_1}\right)ds dt\right]\nonumber\\
\leq\!\!\!\!\!\!\!\!&&C_T\delta\mathbb{E}\left[\sup_{s\in[0,T]}(1+\|X_{s}^{\varepsilon}\|_{H_1}^2)\right]+C_T\delta\left(\int_0^T\mathbb{E}\|X_{s}^{\varepsilon}\|_{H_1}^2ds\right)^{1/2}\left[\mathbb{E}\left(\sup_{s\in[0,T]}\|X_{s}^{\varepsilon}\|_{H_1}^2\right)\right]^{1/2}
\nonumber\\\!\!\!\!\!\!\!\!&&+C\mathbb{E}\left[\sup_{s\in[0,T]}\|X_{s}^{\varepsilon}\|_{H_1}\int^T_{\delta}\int^t_{t-\delta}\|Y^{\vare}_s\|_{H_2}dsdt\right]\nonumber\\
\leq\!\!\!\!\!\!\!\!&&C_T\delta\mathbb{E}\left[\sup_{s\in[0,T]}(1+\|X_{s}^{\varepsilon}\|_{H_1}^2)\right]+C_T\delta^{1/2}\left[\mathbb{E}\left(\sup_{s\in[0,T]}\|X_{s}^{\varepsilon}\|_{H_1}^2\right)\right]^{1/2}\!\!\!\!\!\left[\mathbb{E}\left(\int^T_{\delta}\int^t_{t-\delta}\|Y_{s}^{\varepsilon}\|_{H_2}^2dsdt\right)\right]^{1/2}\nonumber\\
\leq\!\!\!\!\!\!\!\!&&C_{T}\delta\mathbb{E}\left[\sup_{s\in[0,T]}(1+\|X_{s}^{\varepsilon}\|_{H_1}^2)\right]+C_T\delta\int_0^T\mathbb{E}\|Y_{s}^{\varepsilon}\|_{H_2}^2ds
\nonumber\\\leq\!\!\!\!\!\!\!\!&&C_{T}\delta\left(1+\|x\|_{H_1}^2+\|y\|_{H_2}^2\right)
\end{eqnarray}
and
\begin{eqnarray}\label{REGX2a}
\mathbb{E}\left(\int^{T}_{\delta}I_{3}(t)dt\right)\leq\!\!\!\!\!\!\!\!&&C\mathbb{E}\left[\int^{T}_{\delta}\int_{t-\delta} ^{t}\left(1+\|X_{s}^{\varepsilon}\|_{H_1}^2+\mathscr{L}_{X^{\varepsilon}_s}(\|\cdot\|_{H_1}^2)\right)ds dt\right]\nonumber\\
\leq\!\!\!\!\!\!\!\!&&C_T\delta\mathbb{E}\left[\sup_{s\in[0,T]}\left(1+\|X_{s}^{\varepsilon}\|_{H_1}^2+\EE\|X_{s}^{\varepsilon}\|_{H_1}^2\right)\right]\nonumber\\
\leq\!\!\!\!\!\!\!\!&&C_{T}\delta\left(1+\|x\|_{H_1}^2+\|y\|_{H_2}^2\right).
\end{eqnarray}
For $I_{4}(t)$, due to Lemma \ref{PMY}, it is easy to see that
\begin{eqnarray}  \label{REGX3}
\mathbb{E}\left(\int^{T}_{\delta}I_{4}(t)dt\right)=\!\!\!\!\!\!\!\!&&\int^{T}_{\delta}\mathbb{E}\left[\int_{t-\delta} ^{t}\langle B_1(X^{\varepsilon}_s,\mathscr{L}_{X^{\varepsilon}_s})dW^1_s,X_{s}^{\varepsilon}-X_{t-\delta}^{\varepsilon}\rangle_{H_1}\right]dt \nonumber\\
=\!\!\!\!\!\!\!\!&&0.
\end{eqnarray}
Combining estimates \eref{F3.9}-\eref{REGX3}, we get that
\begin{eqnarray}
\mathbb{E}\left(\int^{T}_{\delta}\|X_{t}^{\varepsilon}-X_{t-\delta}^{\varepsilon}\|_{H_1}^2dt\right)\leq\!\!\!\!\!\!\!\!&&C_{T}\delta(1+\|x\|_{H_1}^2+\|y\|_{H_2}^2). \label{F3.13}
\end{eqnarray}
By a similar argument as above, we can also get
\begin{eqnarray}
\mathbb{E}\left(\int^{T}_{\delta}\|X_{t(\delta)}^{\varepsilon}-X_{t-\delta}^{\varepsilon}\|_{H_1}^2dt\right)\leq\!\!\!\!\!\!\!\!&&C_{T}\delta(1+\|x\|_{H_1}^2+\|y\|_{H_2}^2). \label{F3.14}
\end{eqnarray}
Hence, \eref{F3.8}, \eref{F3.13} and \eref{F3.14} implies \eref{F3.7} holds. The proof is complete.\hspace{\fill}$\Box$
\end{proof}

\subsection{Estimates of auxiliary process}
Inspired by the time discretization method developed in \cite{K1}, we
divide $[0,T]$ into intervals of size $\delta$, where $\delta$ is a fixed positive number depending on $\vare$ and will be chosen later.
Then, we
construct an auxiliary process $\hat{Y}_{t}^{\varepsilon}\in{H_2}$, with $\hat{Y}_{0}^{\varepsilon}=Y^{\varepsilon}_{0}=y$, and for any $k\in \mathbb{N}$ and $t\in[k\delta,\min((k+1)\delta,T)]$,
\begin{eqnarray}
\hat{Y}_{t}^{\varepsilon}=\hat{Y}_{k\delta}^{\varepsilon}+\frac{1}{\varepsilon}\int_{k\delta}^{t}
A_2(X_{k\delta}^{\varepsilon},\mathscr{L}_{X^{\vare}_{k\delta}},\hat{Y}_{s}^{\varepsilon})ds+\frac{1}{\sqrt{\varepsilon}}\int_{k\delta}^{t}B_2(X_{k\delta}^{\varepsilon},\mathscr{L}_{X^{\vare}_{k\delta}},\hat{Y}_{s}^{\varepsilon})dW^{{2}}_s,\label{4.6a}
\end{eqnarray}
which is equivalent to
$$
d\hat{Y}_{t}^{\vare}=\frac{1}{\vare}\left[A_2\left(X^{\vare}_{t(\delta)},\mathscr{L}_{X^{\vare}_{t(\delta)}},\hat{Y}_{t}^{\vare}\right)\right]dt+\frac{1}{\sqrt{\vare}}B_2\left(X^{\vare}_{t(\delta)},\mathscr{L}_{X^{\vare}_{t(\delta)}},\hat{Y}_{t}^{\vare}\right)dW^2_t,\quad \hat{Y}_{0}^{\vare}=y.
$$

\vspace{0.2cm}
By the construction of $
\hat{Y}_{t}^{\varepsilon}$, we can obtain the following
estimates which will be used below.

\begin{lemma} \label{MDYa}
For any $T>0$, there exists a constant
$C_{T}>0$ such that,
\begin{eqnarray}
\sup_{\varepsilon\in(0,1)}\sup_{t\in[0,T]}\mathbb{E}\|\hat{Y}_{t}^{\vare}\|_{H_2}^2\leq
C_{T}\left(1+\|x\|_{H_1}^2+\|y\|_{H_2}^2\right) \label{3.13a}
\end{eqnarray}
and
\begin{eqnarray}
\sup_{\varepsilon\in(0,1)}\mathbb{E}\left(\int_0^{T}\|Y_{t}^{\varepsilon}-\hat{Y}_{t}^{\varepsilon}\|_{H_2}^2dt\right)\leq C_{T}\delta(1+\|x\|_{H_1}^2+\|y\|_{H_2}^2). \label{3.14}
\end{eqnarray}
\end{lemma}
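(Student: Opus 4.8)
The plan is to handle the two estimates \eref{3.13a} and \eref{3.14} in turn, in both cases applying It\^{o}'s formula and exploiting the strong dissipativity of $A_2$ in the fast variable to generate exponential decay at rate of order $\varepsilon^{-1}$.

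For the uniform bound \eref{3.13a}, I would apply It\^{o}'s formula to $\|\hat{Y}_t^\varepsilon\|_{H_2}^2$. Since $\hat{Y}^\varepsilon$ solves an equation of exactly the same form as $Y^\varepsilon$, but with the slow data frozen at $X_{t(\delta)}^\varepsilon$, the dissipativity estimate \eref{a2} applies verbatim with $u=X_{t(\delta)}^\varepsilon$ and $\mu=\mathscr{L}_{X_{t(\delta)}^\varepsilon}$. Taking expectations and differentiating in $t$ yields
\begin{eqnarray*}
\frac{d}{dt}\mathbb{E}\|\hat{Y}_t^\varepsilon\|_{H_2}^2 &\leq& -\frac{\lambda}{\varepsilon}\mathbb{E}\|\hat{Y}_t^\varepsilon\|_{H_2}^2 + \frac{C}{\varepsilon}\left(1 + \mathbb{E}\|X_{t(\delta)}^\varepsilon\|_{H_1}^2\right),
\end{eqnarray*}
where I have used $\mathscr{L}_{X_{t(\delta)}^\varepsilon}(\|\cdot\|_{H_1}^2)=\mathbb{E}\|X_{t(\delta)}^\varepsilon\|_{H_1}^2$. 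The comparison theorem then gives an exponential-convolution bound, and since Lemma \ref{PMY} controls $\sup_t\mathbb{E}\|X_t^\varepsilon\|_{H_1}^2$ (hence $\mathbb{E}\|X_{t(\delta)}^\varepsilon\|_{H_1}^2$) uniformly by $C_T(1+\|x\|_{H_1}^2+\|y\|_{H_2}^2)$, estimate \eref{3.13a} follows after bounding the convolution kernel $\varepsilon^{-1}\int_0^t e^{-\lambda(t-s)/\varepsilon}ds\leq\lambda^{-1}$.

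For the difference estimate \eref{3.14}, I would set $Z_t^\varepsilon:=Y_t^\varepsilon-\hat{Y}_t^\varepsilon$, which satisfies $Z_0^\varepsilon=0$ and, on subtracting the two equations, carries drift $\varepsilon^{-1}[A_2(X_t^\varepsilon,\mathscr{L}_{X_t^\varepsilon},Y_t^\varepsilon)-A_2(X_{t(\delta)}^\varepsilon,\mathscr{L}_{X_{t(\delta)}^\varepsilon},\hat{Y}_t^\varepsilon)]$ together with the analogous $B_2$-difference diffusion. Applying It\^{o}'s formula to $\|Z_t^\varepsilon\|_{H_2}^2$ and taking expectation, the crux is to invoke the strict monotonicity \eref{c1} on the drift pairing: this produces $-\kappa\|Z_t^\varepsilon\|_{H_2}^2$ plus a mismatch term $c_2(\|X_t^\varepsilon-X_{t(\delta)}^\varepsilon\|_{H_1}^2+\mathbb{W}_{2,H_1}(\mathscr{L}_{X_t^\varepsilon},\mathscr{L}_{X_{t(\delta)}^\varepsilon})^2)$, while the Lipschitz bound on the $B_2$-difference contributes $2L_{B_2}^2\|Z_t^\varepsilon\|_{H_2}^2$ plus a comparable mismatch. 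Because $\kappa>L_{B_2}^2$ (guaranteed by the standing assumption $\kappa>2L_{B_2}^2$), the coefficient $-2\kappa+2L_{B_2}^2$ of $\mathbb{E}\|Z_t^\varepsilon\|_{H_2}^2$ is strictly negative, so with $\gamma:=2(\kappa-L_{B_2}^2)>0$ I obtain
\begin{eqnarray*}
\frac{d}{dt}\mathbb{E}\|Z_t^\varepsilon\|_{H_2}^2 &\leq& -\frac{\gamma}{\varepsilon}\mathbb{E}\|Z_t^\varepsilon\|_{H_2}^2 + \frac{C}{\varepsilon}\mathbb{E}\|X_t^\varepsilon-X_{t(\delta)}^\varepsilon\|_{H_1}^2,
\end{eqnarray*}
where the Wasserstein term is absorbed using $\mathbb{W}_{2,H_1}(\mathscr{L}_{X_t^\varepsilon},\mathscr{L}_{X_{t(\delta)}^\varepsilon})^2\leq\mathbb{E}\|X_t^\varepsilon-X_{t(\delta)}^\varepsilon\|_{H_1}^2$ via the natural coupling.

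Finally, the variation-of-constants formula gives $\mathbb{E}\|Z_t^\varepsilon\|_{H_2}^2\leq C\varepsilon^{-1}\int_0^t e^{-\gamma(t-s)/\varepsilon}\mathbb{E}\|X_s^\varepsilon-X_{s(\delta)}^\varepsilon\|_{H_1}^2 ds$; integrating in $t$ over $[0,T]$ and applying Fubini's theorem, the inner integral $\int_s^T e^{-\gamma(t-s)/\varepsilon}dt\leq\varepsilon/\gamma$ cancels the prefactor $\varepsilon^{-1}$, leaving
\begin{eqnarray*}
\int_0^T\mathbb{E}\|Z_t^\varepsilon\|_{H_2}^2 dt &\leq& \frac{C}{\gamma}\int_0^T\mathbb{E}\|X_s^\varepsilon-X_{s(\delta)}^\varepsilon\|_{H_1}^2 ds,
\end{eqnarray*}
and Lemma \ref{COX} bounds the right-hand side by $C_T\delta(1+\|x\|_{H_1}^2+\|y\|_{H_2}^2)$, as required. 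The main obstacle is the bookkeeping of the $\varepsilon^{-1}$ factors: the exponential dissipation must be strong enough (the sign of $\gamma$) that integrating its kernel against the slow increment reproduces exactly one power of $\varepsilon$, which is then traded for the order-$\delta$ smallness supplied by Lemma \ref{COX}; the uniformity in $\varepsilon$ hinges on this precise cancellation rather than on any absolute smallness of $\varepsilon$.
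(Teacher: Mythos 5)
Your proposal is correct and follows essentially the same route as the paper: for \eref{3.13a} the paper merely remarks that the argument mirrors Lemma \ref{PMY} (which is exactly the dissipativity-plus-comparison argument you spell out), and for \eref{3.14} the paper likewise applies It\^{o}'s formula to $Y^{\varepsilon}_t-\hat{Y}^{\varepsilon}_t$, invokes the strict monotonicity in $({\mathbf{H}}{\mathbf{2}})$ together with $\mathbb{W}_{2,H_1}(\mathscr{L}_{X^{\vare}_{t}},\mathscr{L}_{X^{\vare}_{t(\delta)}})^2\leq\mathbb{E}\|X_t^\varepsilon-X_{t(\delta)}^\varepsilon\|_{H_1}^2$, then concludes via the comparison theorem, Fubini, and Lemma \ref{COX}. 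The only cosmetic difference is that you absorb the noise term by the crude bound $2L_{B_2}^2\|Z_t^\varepsilon\|_{H_2}^2$ (requiring $\kappa>L_{B_2}^2$) whereas the paper uses a Young-type splitting (requiring only $2\kappa>L_{B_2}^2$); both conditions hold under the standing assumption $\kappa>2L_{B_2}^2$, so your dissipation constant $\gamma$ is valid.
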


\begin{proof}
Since the proof of \eref{3.13a} is similar to
 Lemma \ref{PMY}, we omit it here. Next, we will prove \eref{3.14}.
It is easy to see that $Y_{t}^{\varepsilon}-\hat{Y}_{t}^{\varepsilon}$ satisfies the following equation
\begin{eqnarray}\label{e7}
\left\{ \begin{aligned}
&d(Y_{t}^{\varepsilon}-\hat{Y}_{t}^{\varepsilon})=\frac{1}{\varepsilon}\left[A_2\left(X^{\vare}_{t},\mathscr{L}_{X^{\vare}_{t}},{Y}_{t}^{\vare}\right)-A_2\left(X^{\vare}_{t(\delta)},\mathscr{L}_{X^{\vare}_{t(\delta)}},\hat{Y}_{t}^{\vare}\right)\right]dt\\ &~~~~~~~~~~~~~~~~~+\frac{1}{\sqrt{\vare}}\left[B_2\left(X^{\vare}_{t},\mathscr{L}_{X^{\vare}_{t}},{Y}_{t}^{\vare}\right)-B_2\left(X^{\vare}_{t(\delta)},\mathscr{L}_{X^{\vare}_{t(\delta)}},\hat{Y}_{t}^{\vare}\right)\right]dW_t^{2},\\
&Y_{0}^{\varepsilon}-\hat{Y}_{0}^{\varepsilon}=0,
\end{aligned}\right.
\end{eqnarray}

Thus, applying It\^{o}'s formula and taking expectation, we get
\begin{eqnarray*}
\EE\|Y_{t}^{\varepsilon}-\hat{Y}_{t}^{\varepsilon}\|_{H_2}^2
=\!\!\!\!\!\!\!\!&&\frac{2}{\varepsilon}\EE\int^t_0{}_{V_2^*}\langle A_2\left(X^{\vare}_{s},\mathscr{L}_{X^{\vare}_{s}},{Y}_{s}^{\vare}\right)-A_2\left(X^{\vare}_{s(\delta)},
\mathscr{L}_{X^{\vare}_{s(\delta)}},\hat{Y}_{s}^{\vare}\right),Y_{s}^{\varepsilon}-\hat{Y}_{s}^{\varepsilon}\rangle_{V_2} ds \nonumber\\
\!\!\!\!\!\!\!\!&&+\frac{1}{\varepsilon}\EE\int^t_0\|B_2\left(X^{\vare}_{s},\mathscr{L}_{X^{\vare}_{s}},{Y}_{s}^{\vare}\right)-
B_2\left(X^{\vare}_{s(\delta)},\mathscr{L}_{X^{\vare}_{s(\delta)}},\hat{Y}_{s}^{\vare}\right)\|_{L_2(U_2,H_2)}^2ds.
\end{eqnarray*}
Then by condition $({\mathbf{H}}{\mathbf{2}})$, we have
\begin{eqnarray}\label{f1}
\!\!\!\!\!\!\!\!&&\frac{d}{dt}\EE\|Y_{t}^{\varepsilon}-\hat{Y}_{t}^{\varepsilon}\|_{H_2}^2
\nonumber\\=\!\!\!\!\!\!\!\!&&\frac{2}{\varepsilon}\EE{}_{V_2^*}\langle A_2\left(X^{\vare}_{t},\mathscr{L}_{X^{\vare}_{t}},{Y}_{t}^{\vare}\right)-A_2\left(X^{\vare}_{t(\delta)},\mathscr{L}_{X^{\vare}_{t(\delta)}},\hat{Y}_{t}^{\vare}\right),Y_{t}^{\varepsilon}-\hat{Y}_{t}^{\varepsilon}\rangle_{V_2} \nonumber\\
\!\!\!\!\!\!\!\!&&+\frac{1}{\varepsilon}\EE\left\|B_2\left(X^{\vare}_{t},\mathscr{L}_{X^{\vare}_{t}},{Y}_{t}^{\vare}\right)-B_2\left(X^{\vare}_{t(\delta)},\mathscr{L}_{X^{\vare}_{t(\delta)}},\hat{Y}_{t}^{\vare}\right)\right\|_{L_2(U_2,H_2)}^2\nonumber\\
\leq\!\!\!\!\!\!\!\!&&-\frac{2\kappa}{\varepsilon}\EE\|Y_{t}^{\varepsilon}-\hat{Y}_{t}^{\varepsilon}\|_{H_2}^2
+\frac{c_2}{\varepsilon}\EE\Big[\|X_t^\varepsilon-X_{t(\delta)}^\varepsilon\|_{H_1}^2+\mathbb{W}_{2,H_1}(\mathscr{L}_{X^{\vare}_{t}},\mathscr{L}_{X^{\vare}_{t(\delta)}})^2\Big]
  \nonumber\\
\!\!\!\!\!\!\!\!&&+\frac{1}{\varepsilon}\EE\Big[L_{B_2}\|Y_{t}^{\varepsilon}-\hat{Y}_{t}^{\varepsilon}\|_{H_2}
+c_2\|X_t^\varepsilon-X_{t(\delta)}^\varepsilon\|_{H_1}+c_2\mathbb{W}_{2,H_1}(\mathscr{L}_{X^{\vare}_{t}},\mathscr{L}_{X^{\vare}_{t(\delta)}})\Big]^2.
\end{eqnarray}
Note that
\begin{eqnarray}\label{f2}
\mathbb{W}_{2,H_1}(\mathscr{L}_{X^{\vare}_{t}},\mathscr{L}_{X^{\vare}_{t(\delta)}})^2\leq\EE\|X_t^\varepsilon-X_{t(\delta)}^\varepsilon\|_{H_1}^2.
\end{eqnarray}
Due to $2\kappa>L_{B_2}^2$, then according to  \eref{f1} and \eref{f2} there exists $\theta>0$ such that
\begin{eqnarray*}
\frac{d}{dt}\EE\|Y_{t}^{\varepsilon}-\hat{Y}_{t}^{\varepsilon}\|_{H_2}^2
\leq-\frac{\theta}{\varepsilon}\EE\|Y_{t}^{\varepsilon}-\hat{Y}_{t}^{\varepsilon}\|_{H_2}^2+\frac{C}{\varepsilon}\EE\|X_t^\varepsilon-X_{t(\delta)}^\varepsilon\|_{H_1}^2.\nonumber
\end{eqnarray*}
Therefore, by the comparison theorem we have
\begin{eqnarray*}
\EE\|Y_{t}^{\varepsilon}-\hat{Y}_{t}^{\varepsilon}\|_{H_2}^2\leq\frac{C}{\varepsilon}\int_0^te^{-\frac{\theta(t-s)}{\vare}}\EE\|X_s^\varepsilon-X_{s(\delta)}^\varepsilon\|_{H_1}^2ds.
\end{eqnarray*}
Using Fubini's theorem, we can get that for any $T>0$,
\begin{eqnarray*}
\EE\left(\int_0^T\|Y_{t}^{\varepsilon}-\hat{Y}_{t}^{\varepsilon}\|_{H_2}^2dt\right)\leq\!\!\!\!\!\!\!\!&& \frac{C}{\varepsilon}\int_0^T\int^t_0e^{-\frac{\beta(t-s)}{\vare}}\EE\|X_s^\varepsilon-X_{s(\delta)}^\varepsilon\|_{H_1}^2dsdt\nonumber\\
=\!\!\!\!\!\!\!\!&&  \frac{C}{\varepsilon}\EE\left[\int_0^T\|X_s^\varepsilon-X_{s(\delta)}^\varepsilon\|_{H_1}^2\left(\int^T_s e^{-\frac{\theta(t-s)}{\vare}}dt\right)ds\right]\nonumber\\
\leq\!\!\!\!\!\!\!\!&& C\EE\left(\int_0^T\|X_s^\varepsilon-X_{s(\delta)}^\varepsilon\|_{H_1}^2 ds\right).
\end{eqnarray*}
It follows from  Lemma \ref{COX} that
\begin{eqnarray*}
\mathbb{E}\left(\int_0^{T}\|Y_{t}^{\varepsilon}-\hat{Y}_{t}^{\varepsilon}\|_{H_2}^2dt\right)\leq C_{T}\delta(1+\|x\|_{H_1}^2+\|y\|_{H_2}^2).
\end{eqnarray*}
The proof is complete.\hspace{\fill}$\Box$
\end{proof}

\subsection{The frozen and averaged equations}
In this subsection, we first introduce the frozen equation associated with the fast equation for a fixed slow component $x\in {H_1}$ and $\mu\in\mathscr{P}_2(H_1)$, i.e.,
\begin{eqnarray}
\left\{ \begin{aligned}
&dY_{t}=[A_2(x,\mu,Y_{t})]dt+B_2(x,\mu,Y_t)d\tilde{{W}}_{t}^{2},\\
&Y_{0}=y\in H_2,
\end{aligned} \right.\label{FEQ1}
\end{eqnarray}
where $\tilde{W}_{t}^{2}$ is a cylindrical Wiener process in a separable Hilbert space $U_2$ on another
probability space $(\tilde{\Omega},\tilde{\mathscr{F}},\tilde{\mathbb{P}})$ with natural filtration $(\tilde{\mathscr{F}}_t)_{t\geq 0}$.

 Since $x$ and $\mu$ are fixed in equation \eref{FEQ1}, following from \cite[Theorem 4.2.4]{LR1} under Hypothesis \ref{h2}, there is a unique solution denoted by $Y_t^{x,\mu,y}$ to equation (\ref{FEQ1}), which is a homogeneous Markov process. Let $P^{x,\mu}_t$ be the transition semigroup of $Y_{t}^{x,\mu,y}$,
that is, for any bounded measurable function $\varphi$ on $H_2$,
\begin{eqnarray*}
P^{x,\mu}_t \varphi(y)= \tilde{\mathbb{E}} \left[\varphi\left(Y_{t}^{x,\mu,y}\right)\right], \quad y \in H_2,\ \ t>0,
\end{eqnarray*}
where $\tilde{\mathbb{E}}$ is the expectation on $(\tilde{\Omega},\tilde{\mathscr{F}},\tilde{\mathbb{P}})$.
Then by \cite[Theorem 4.3.9]{LR1}, $P^{x,\mu}_t$ has a unique invariant measure $\nu^{x,\mu}$. Moreover, we have the following two propositions.
\begin{proposition}\label{Rem 4.0}
There exists a constant $C>0$ such that for any $x,x_1,x_2\in {H_1}, y\in {H_2}$ and $\mu,\nu\in\mathscr{P}_2(H_1)$,
\begin{equation}\label{9}
\sup_{t\in[0,\infty)}\tilde{\mathbb{E}}\|Y_t^{x,\mu,y}\|_{H_2}^2\leq C\Big(1+\|x\|_{H_1}+\|y\|_{H_2}+\mu(\|\cdot\|_{H_1}^2)\Big),
\end{equation}
\begin{equation}\label{10}
\sup_{t\in[0,\infty)}\tilde{\mathbb{E}}\|Y^{x_1,\mu,y}_t-Y^{x_2,\nu,y}_t\|_{H_2}^2\leq C\left(\|x_1-x_2\|_{H_1}^2+\mathbb{W}_{2,H_1}(\mu,\nu)^2\right).
\end{equation}
\end{proposition}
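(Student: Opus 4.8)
The plan is to derive both estimates by applying It\^o's formula to the squared $H_2$-norm of (a function of) the solution of the frozen equation \eref{FEQ1} and then exploiting the strict dissipativity built into Hypothesis \ref{h2} to produce a linear differential inequality whose forcing term is constant in $t$; the comparison theorem then converts this into bounds that are uniform in $t\in[0,\infty)$, which is exactly the content of the $\sup_{t}$ appearing in \eref{9} and \eref{10}.

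For \eref{9}, since $x$ and $\mu$ are frozen, I would apply the variational It\^o formula (as in \cite[Theorem 4.2.5]{LR1}) to $\|Y_t^{x,\mu,y}\|_{H_2}^2$, take expectation so that the stochastic integral drops out, and differentiate in $t$ to get $\frac{d}{dt}\tilde{\mathbb E}\|Y_t^{x,\mu,y}\|_{H_2}^2 = \tilde{\mathbb E}\big[2{}_{V_2^*}\langle A_2(x,\mu,Y_t^{x,\mu,y}),Y_t^{x,\mu,y}\rangle_{V_2}+\|B_2(x,\mu,Y_t^{x,\mu,y})\|_{L_2(U_2,H_2)}^2\big]$. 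Inserting the dissipativity estimate \eref{a2}, which holds for some $\lambda\in(0,\kappa)$, gives $\frac{d}{dt}\tilde{\mathbb E}\|Y_t^{x,\mu,y}\|_{H_2}^2 \le -\lambda\,\tilde{\mathbb E}\|Y_t^{x,\mu,y}\|_{H_2}^2 + C\big(1+\|x\|_{H_1}^2+\mu(\|\cdot\|_{H_1}^2)\big)$. Because the inhomogeneous term is independent of $t$, the comparison theorem yields $\tilde{\mathbb E}\|Y_t^{x,\mu,y}\|_{H_2}^2\le \|y\|_{H_2}^2 e^{-\lambda t}+\frac{C}{\lambda}(1+\|x\|_{H_1}^2+\mu(\|\cdot\|_{H_1}^2))$, which is bounded uniformly in $t$ and delivers \eref{9}.

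For \eref{10}, I would set $Z_t:=Y_t^{x_1,\mu,y}-Y_t^{x_2,\nu,y}$, note $Z_0=0$, and apply It\^o's formula to $\|Z_t\|_{H_2}^2$. After taking expectation, the drift contribution is handled by the strict monotonicity in $({\mathbf H}{\mathbf 2})$ (with arguments $(x_1,\mu,Y_t^{x_1,\mu,y})$ and $(x_2,\nu,Y_t^{x_2,\nu,y})$), producing $-2\kappa\|Z_t\|_{H_2}^2+2c_2(\|x_1-x_2\|_{H_1}^2+\mathbb W_{2,H_1}(\mu,\nu)^2)$, while the diffusion contribution is estimated by the Lipschitz bound on $B_2$ in $({\mathbf H}{\mathbf 2})$ followed by Young's inequality, giving $(1+\rho)L_{B_2}^2\|Z_t\|_{H_2}^2+C_\rho(\|x_1-x_2\|_{H_1}^2+\mathbb W_{2,H_1}(\mu,\nu)^2)$ for any $\rho>0$. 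Combining these yields $\frac{d}{dt}\tilde{\mathbb E}\|Z_t\|_{H_2}^2\le -\big(2\kappa-(1+\rho)L_{B_2}^2\big)\tilde{\mathbb E}\|Z_t\|_{H_2}^2+C\big(\|x_1-x_2\|_{H_1}^2+\mathbb W_{2,H_1}(\mu,\nu)^2\big)$.

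The crux, and the one genuinely load-bearing step, is keeping the net exponential rate $\gamma:=2\kappa-(1+\rho)L_{B_2}^2$ strictly positive: this is precisely where the structural assumption $\kappa>2L_{B_2}^2$ (hence $2\kappa>L_{B_2}^2$) is invoked, since it lets one choose $\rho$ small enough that $\gamma>0$. Once $\gamma>0$, the comparison theorem with $Z_0=0$ gives $\tilde{\mathbb E}\|Z_t\|_{H_2}^2\le \frac{C}{\gamma}(\|x_1-x_2\|_{H_1}^2+\mathbb W_{2,H_1}(\mu,\nu)^2)$ uniformly in $t$, which is \eref{10}. The remaining points — justifying the variational It\^o formula, the vanishing of the martingale term in expectation, and the control of the reference terms $A_2(x,\mu,0)$, $B_2(x,\mu,0)$ feeding into \eref{a2} — are routine and already packaged in the earlier estimates, so the sign of $\gamma$ is the only substantive issue.
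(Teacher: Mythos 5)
Your proof is correct and follows essentially the same route as the paper's: It\^o's formula plus the dissipativity estimate \eref{a2} and the comparison theorem give \eref{9}, while for \eref{10} the same differential-inequality argument applied to the difference process, using the monotonicity and Lipschitz parts of $({\mathbf H}{\mathbf 2})$ together with Young's inequality and the standing assumption $\kappa>2L_{B_2}^2$ to keep the decay rate $2\kappa-(1+\rho)L_{B_2}^2$ strictly positive, is exactly the computation the paper performs in Lemma \ref{MDYa} and invokes here as ``similar calculations.'' Note only that what both your argument and the paper's proof actually deliver in \eref{9} is a bound involving $\|x\|_{H_1}^2$ and $\|y\|_{H_2}^2$ (see \eref{F3.4}), so the unsquared norms in the printed statement are a harmless typo that your version silently corrects.
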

\begin{proof}
By It\^{o}'s formula, we have
\begin{eqnarray}\label{FF3.5}
\|Y^{x,\mu,y}_t\|_{H_2}^2
=\!\!\!\!\!\!\!\!&&\|y\|_{H_2}^2+2\int_0^t{}_{V_2^*}\langle A_2(x,\mu,Y^{x,\mu,y}_s),Y^{x,\mu,y}_s\rangle_{V_2}ds+\int_0^t\|B_2(x,\mu,Y^{x,\mu,y}_s)\|_{L_2(U_2,H_2)}^2ds
\nonumber\\
\!\!\!\!\!\!\!\!&&
+2\int_0^t\langle B_2(x,\mu,Y^{x,\mu,y}_s)d\tilde{W}^2_s,Y^{x,\mu,y}_s\rangle_{H_2}.
\end{eqnarray}
Taking
expectation on both sides of \eref{FF3.5}, by \eref{a2} we obtain
\begin{eqnarray*}
\frac{d}{dt}\tilde{\mathbb{E}}\|Y^{x,\mu,y}_t\|_{H_2}^2
=\!\!\!\!\!\!\!\!&&\tilde{\mathbb{E}}\left({2}_{V_2^*}\langle A_2(x,\mu,Y^{x,\mu,y}_t),Y^{x,\mu,y}_t\rangle_{V_2}+\|B_2(x,\mu,Y^{x,\mu,y}_t)\|_{L_2(U_2,H_2)}^2\right)
\nonumber\\
\leq\!\!\!\!\!\!\!\!&&-\lambda\|Y^{x,\mu,y}_t\|_{H_2}^2+C\left(1+\|x\|_{H_1}^2+\mu(\|\cdot\|_{H_1}^2)\right).
\end{eqnarray*}
Hence, applying the comparison theorem yields
\begin{eqnarray}
\tilde{\mathbb{E}}\|Y_{t}^{x,\mu,y}
\|_{H_2}^{2}\leq\!\!\!\!\!\!\!\!&&\|y\|_{H_2}^2e^{-\lambda t}+C\int^t_0
e^{-\lambda(t-s)}\left(1+\|x\|_{H_1}^2+\mu(\|\cdot\|_{H_1}^2)\right)ds
\nonumber\\
\leq\!\!\!\!\!\!\!\!&&\|y\|_{H_2}^2e^{-\lambda t}+C\left(1+\|x\|_{H_1}^2+\mu(\|\cdot\|_{H_1}^2)\right),\label{F3.4}
\end{eqnarray}
which gives (\ref{9}).

Following the similar calculations as above, by $({\mathbf{H}}{\mathbf{2}})$ it is obvious that (\ref{10}) holds. \hspace{\fill}$\Box$

\end{proof}

\begin{proposition}\label{Rem 4.1}
There exist $C>0$ and $\rho>0$ such that for any $x\in {H_1}, y\in {H_2}$ and $\mu\in\mathscr{P}_2(H_1)$,
\begin{equation}\label{11}
\Big\|\tilde{\mathbb{E}}f(x,\mu,Y_t^{x,\mu,y})-\bar{f}(x,\mu)\Big\|_{H_1}\leq C\Big(1+\|x\|_{H_1}+\|y\|_{H_2}+\mu(\|\cdot\|_{H_1}^2)^{1/2}\Big)e^{-\frac{\rho t}{2}},
\end{equation}
where $\bar{f}(x,\mu)=\int_{H_2}{f}(x,\mu,z)\nu^{x,\mu}(dz)$.
\end{proposition}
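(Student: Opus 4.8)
The plan is to combine the exponential contraction of the frozen dynamics in the fast variable with the Lipschitz dependence of $f$ on its last argument, and then to exploit the invariance of $\nu^{x,\mu}$ to rewrite $\bar f$.

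First I would establish a contraction estimate for two solutions of the frozen equation \eref{FEQ1} started from different initial data $y_1,y_2\in H_2$ but sharing the same frozen pair $(x,\mu)$. Writing $Y^{(i)}_t:=Y_t^{x,\mu,y_i}$, I apply It\^o's formula to $\|Y^{(1)}_t-Y^{(2)}_t\|_{H_2}^2$ and take expectation, so that the stochastic integral (a martingale) drops out. Using the strict monotonicity in $({\mathbf{H}}{\mathbf{2}})$ with $u_1=u_2=x$ and $\mu=\nu$ together with the Lipschitz bound on $B_2$ yields
$$\frac{d}{dt}\tilde{\mathbb{E}}\|Y^{(1)}_t-Y^{(2)}_t\|_{H_2}^2\le -(2\kappa-L_{B_2}^2)\,\tilde{\mathbb{E}}\|Y^{(1)}_t-Y^{(2)}_t\|_{H_2}^2.$$
Since $2\kappa>L_{B_2}^2$, setting $\rho:=2\kappa-L_{B_2}^2>0$ and applying the comparison theorem gives the contraction $\tilde{\mathbb{E}}\|Y^{(1)}_t-Y^{(2)}_t\|_{H_2}^2\le\|y_1-y_2\|_{H_2}^2e^{-\rho t}$.

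Next I would use the invariance of $\nu^{x,\mu}$ for the semigroup $P_t^{x,\mu}$: applying it to the map $z\mapsto f(x,\mu,z)$ gives
$$\bar f(x,\mu)=\int_{H_2}f(x,\mu,z)\,\nu^{x,\mu}(dz)=\int_{H_2}\tilde{\mathbb{E}}f(x,\mu,Y_t^{x,\mu,z})\,\nu^{x,\mu}(dz).$$
Subtracting this from $\tilde{\mathbb{E}}f(x,\mu,Y_t^{x,\mu,y})$ and moving the $H_1$-norm inside the integral, the Lipschitz property of $f$ in its third argument from $({\mathbf{A}}{\mathbf{2}})$, followed by Jensen's inequality and the contraction of the first step, give
$$\big\|\tilde{\mathbb{E}}f(x,\mu,Y_t^{x,\mu,y})-\bar f(x,\mu)\big\|_{H_1}\le c_1e^{-\rho t/2}\int_{H_2}\|y-z\|_{H_2}\,\nu^{x,\mu}(dz).$$

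Finally, to bound the right-hand side I would control the first moment of $\nu^{x,\mu}$. Using invariance together with the uniform-in-time bound \eref{9} of Proposition \ref{Rem 4.0} and letting $t\to\infty$ (so that the $e^{-\lambda t}\|y\|_{H_2}^2$ contribution in \eref{F3.4} vanishes), I obtain $\int_{H_2}\|z\|_{H_2}^2\,\nu^{x,\mu}(dz)\le C(1+\|x\|_{H_1}+\mu(\|\cdot\|_{H_1}^2))$, after which a Cauchy--Schwarz step bounds $\int_{H_2}\|z\|_{H_2}\,\nu^{x,\mu}(dz)$. Combining with $\|y-z\|_{H_2}\le\|y\|_{H_2}+\|z\|_{H_2}$ and absorbing the sublinear $\|x\|_{H_1}^{1/2}$ into $1+\|x\|_{H_1}$ produces exactly the right-hand side of \eref{11} with rate $\rho$. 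The only genuinely technical point is the quantitative contraction of the first step; I expect the moment bound for the invariant measure to be the remaining spot requiring slight care about which powers of the norms appear, but both follow routinely from the monotonicity and coercivity hypotheses already in force.
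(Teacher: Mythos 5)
Your proposal is correct and follows essentially the same route as the paper's proof: exponential contraction of two frozen solutions via It\^o's formula and $({\mathbf{H}}{\mathbf{2}})$, rewriting $\bar{f}(x,\mu)$ through the invariance of $\nu^{x,\mu}$ as $\int_{H_2}\tilde{\mathbb{E}}f(x,\mu,Y_t^{x,\mu,z})\,\nu^{x,\mu}(dz)$, and then the Lipschitz property of $f$ combined with a second-moment bound on $\nu^{x,\mu}$ derived from \eref{F3.4} and invariance. The only cosmetic difference is that you make the rate $\rho=2\kappa-L_{B_2}^2$ explicit and let $t\to\infty$ in the moment bound, where the paper fixes $t=t_0$ with $e^{-\lambda t_0}<1$ and rearranges --- both steps implicitly use the a priori finiteness of $\int_{H_2}\|z\|_{H_2}^2\,\nu^{x,\mu}(dz)$ in the same way.
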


\begin{proof}
We denote by $Y_{t}^{x,\mu,y'}$ the solution of Eq.~$\eref{FEQ1}$ with initial value $Y_0=y'$.
Using It\^{o}'s formula and $({\mathbf{H}}{\mathbf{2}})$, similar to \eref{F3.4}, there exists a constant $\rho>0$ such that
\begin{eqnarray}
\tilde{\mathbb{E}}\|Y_{t}^{x,\mu,y}-Y_{t}^{x,\mu,y'}\|_{H_2}^{2}\leq\|y-y'\|_{H_2}^{2}e^{-\rho t},\label{FF3.4a}
\end{eqnarray}
for any $y,y'\in H_2$.

Then by the invariance of $\nu^{x,\mu}$ and
\eref{F3.4}, we have
\begin{eqnarray*}
\int_{H_2}\|y'\|_{H_2}^{2}\nu^{x,\mu}(dy')
=\!\!\!\!\!\!\!\!&&\int_{H_2}\tilde{\mathbb{E}}\|Y_{t}^{x,\mu,y'}\|_{H_2}^{2}\nu^{x,\mu}(dy')
\nonumber\\
\leq\!\!\!\!\!\!\!\!&& e^{-\lambda t}\int_{H_2}\|y'\|_{H_2}^{2}\nu^{x,\mu}(dy')+C\left(1+\|x\|_{H_1}^2+\mu(\|\cdot\|_{H_1}^2)\right).
\end{eqnarray*}
Take $t=t_0$ such that $e^{-\lambda t_0} < 1$, we have
\begin{eqnarray}
\int_{H_2}\|y'\|_{H_2}^{2}\nu^{x,\mu}(dy')
\leq C\left(1+\|x\|_{H_1}^2+\mu(\|\cdot\|_{H_1}^2)\right).\label{FF1}
\end{eqnarray}
Then, using the invariance of $\nu^{x,\mu}$, \eref{FF3.4a} and
\eref{FF1}, we have
\begin{eqnarray*}
\left\|\tilde{\mathbb{E}}f(x,\mu,Y_t^{x,\mu,y})-\bar{f}(x,\mu)\right\|_{H_1}
=\!\!\!\!\!\!\!\!&&\left\|\tilde{\mathbb{E}}f(x,\mu,Y_t^{x,\mu,y})-\int_{H_2}{f}(x,\mu,{y'})\nu^{x,\mu}(d{y'})\right\|_{H_1}
\nonumber\\=\!\!\!\!\!\!\!\!&&\left\|\int_{H_2} \big[\tilde{\mathbb{E}}f(x,\mu,Y_t^{x,\mu,y})-\tilde{\mathbb{E}}f(x,\mu,Y_t^{x,\mu,y'})\big]\nu^{x,\mu}(d{y'})\right\|_{H_1}
\nonumber\\\leq\!\!\!\!\!\!\!\!&&C\int_{H_2} \tilde{\mathbb{E}}\|Y_t^{x,\mu,y}-Y_t^{x,\mu,y'}\|_{H_2}\nu^{x,\mu}(d{y'})
\nonumber\\\leq\!\!\!\!\!\!\!\!&&Ce^{-\frac{\rho t}{2}}\int_{H_2} \|y-y'\|_{H_2}\nu^{x,\mu}(d{y'})
\nonumber\\\leq\!\!\!\!\!\!\!\!&&Ce^{-\frac{\rho t}{2}}\left(1+\|x\|_{H_1}+\|y\|_{H_2}+(\mu(\|\cdot\|_{H_1}^2))^{1/2}\right),
\end{eqnarray*}
which concludes the proof of Proposition \ref{Rem 4.1}.\hspace{\fill}$\Box$
\end{proof}

\vskip 0.3cm

Next, we consider the corresponding averaged equation, i.e.,
\begin{equation}\left\{\begin{array}{l}
\displaystyle d\bar{X}_{t}=A_1(\bar{X}_{t},\mathscr{L}_{\bar{X}_{t}})dt+\bar{f}(\bar{X}_{t},\mathscr{L}_{\bar{X}_{t}})dt+B_1(\bar{X}_{t},\mathscr{L}_{\bar{X}_{t}})dW^{{1}}_t,\\
\bar{X}_{0}=x,\end{array}\right. \label{3.1a}
\end{equation}
with
\begin{align*}
\bar{f}(x,\mu)=\int_{H_2}{f}(x,\mu,z)\nu^{x,\mu}(dz),
\end{align*}
with $\nu^{x,\mu}$ being the unique invariant measure of equation $\eref{FEQ1}$.
\vskip 0.3cm

\begin{Rem}
In terms of exponential ergodicity (\ref{11}) and Lipschitz of $f$, one can easily check that
$\bar{f}$ is also Lipschitz continuous, i.e.
\begin{eqnarray}\label{p0}
\|\bar{f}(x_1,\mu)-\bar{f}(x_2,\nu)\|_{H_1}
\leq C\left(\|x_1-x_2\|_{H_1}+\mathbb{W}_{2,H_1}(\mu,\nu)\right),x_1,x_2\in H_1,\mu,\nu\in\mathscr{P}_2(H_1).~
\end{eqnarray}

\begin{proof}
For any $x_1,x_2\in H_1$ and $\mu,\nu\in\mathscr{P}_2(H_1)$, by (\ref{10}) and (\ref{FF3.5}),
\begin{eqnarray}\label{12}
\!\!\!\!\!\!\!\!&&\|\bar{f}(x_1,\mu)-\bar{f}(x_2,\nu)\|_{H_1}^2
\nonumber\\
\leq\!\!\!\!\!\!\!\!&&C\left\|\bar{f}(x_1,\mu)-\tilde{\mathbb{E}}f(x_1,\mu,Y_t^{x_1,\mu,y})\right\|_{H_1}^2+C\left\|\bar{f}(x_2,\nu)-\tilde{\mathbb{E}}f(x_2,\nu,Y_t^{x_2,\nu,y})\right\|_{H_1}^2
\nonumber\\
\!\!\!\!\!\!\!\!&&
+C\left\|\tilde{\mathbb{E}}f(x_1,\mu,Y_t^{x_1,\mu,y})-\tilde{\mathbb{E}}f(x_2,\nu,Y_t^{x_2,\nu,y})\right\|_{H_1}^2
\nonumber\\
\leq\!\!\!\!\!\!\!\!&&C\left(1+\|x_1\|_{H_1}^2+\|x_2\|_{H_1}^2+\|y\|_{H_2}^2+\mu(\|\cdot\|_{H_1}^2)+\nu(\|\cdot\|_{H_1}^2)\right)e^{-\rho t}
\nonumber\\
\!\!\!\!\!\!\!\!&&
+C\left(\|x_1-x_2\|_{H_1}^2+\mathbb{W}_{2,H_1}(\mu,\nu)^2\right).
\end{eqnarray}
Taking $t\to\infty$ for both sides of (\ref{12}) leads to the desired estimate (\ref{p0}).\hspace{\fill}$\Box$
\end{proof}

\end{Rem}

Thus, similar to Theorem \ref{Th1}, for any $x\in H_1$, Eq.~\eref{3.1a} has a unique solution $\bar{X}_{t}$. Moreover, using an argument
similar to that in Lemma \ref{PMY} and Lemma \ref{COX}, we also have the following estimates.

\begin{lemma}\label{L3.8}
For any
$T>0$, there exists a constant $C_{T}>0$ such that
\begin{align*}
\mathbb{E}\left(\sup_{t\in[0,T]}\|\bar{X}_{t}\|_{H_1}^{2}\right)\leq C_{T}(1+\|x\|_{H_1}^2)
\end{align*}
and
\begin{align}
\mathbb{E}\left[\int^{T}_0\|\bar{X}_{t}-\bar{X}_{t(\delta)}\|_{H_1}^2 dt\right]\leq C_{T}\delta\left(1+\|x\|_{H_1}^2\right).\label{za1}
\end{align}
\end{lemma}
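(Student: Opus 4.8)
The plan is to establish both bounds by mimicking, respectively, the arguments of Lemma \ref{PMY} and Lemma \ref{COX}, exploiting the fact that the averaged equation \eref{3.1a} involves only the slow variable, so no fast component needs to be controlled. The one genuinely new ingredient is that the averaged drift $\bar{f}$ enjoys linear growth, which is an immediate consequence of its Lipschitz continuity \eref{p0}: writing $\|\bar{f}(u,\mu)\|_{H_1}\le\|\bar{f}(0,\delta_0)\|_{H_1}+C(\|u\|_{H_1}+\mathbb{W}_{2,H_1}(\mu,\delta_0))$ and noting $\mathbb{W}_{2,H_1}(\mu,\delta_0)^2=\mu(\|\cdot\|_{H_1}^2)$, one obtains $\|\bar{f}(u,\mu)\|_{H_1}^2\le C(1+\|u\|_{H_1}^2+\mu(\|\cdot\|_{H_1}^2))$.

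For the first estimate, I would apply It\^o's formula to $\|\bar{X}_t\|_{H_1}^2$. Pairing the $A_1$-drift with the $B_1$-diffusion through the coercivity condition $({\mathbf{A}}{\mathbf{3}})$ yields $-\theta\|\bar{X}_t\|_{V_1}^\alpha+c_1(1+\|\bar{X}_t\|_{H_1}^2+\mathscr{L}_{\bar{X}_t}(\|\cdot\|_{H_1}^2))$, and the $\bar{f}$-term is controlled by Cauchy--Schwarz together with the linear growth above. Taking the supremum over $t$, estimating the stochastic integral by the Burkholder--Davis--Gundy inequality and absorbing it into $\tfrac12\mathbb{E}[\sup_s\|\bar{X}_s\|_{H_1}^2]$, then using the identity $\mathscr{L}_{\bar{X}_t}(\|\cdot\|_{H_1}^2)=\mathbb{E}\|\bar{X}_t\|_{H_1}^2$ to close the law-dependent term and finally Gronwall's inequality, gives the claimed moment bound, whose right-hand side depends only on $\|x\|_{H_1}^2$ since \eref{3.1a} carries no fast component. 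Taking expectations in the same It\^o expansion \emph{without} the supremum and retaining the term $-\theta\|\bar{X}_t\|_{V_1}^\alpha$ additionally produces $\mathbb{E}\int_0^T\|\bar{X}_t\|_{V_1}^\alpha\,dt\le C_T(1+\|x\|_{H_1}^2)$, the $\bar{X}$-analogue of \eref{F3.1}, which will be needed below.

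For the second estimate, I would follow Lemma \ref{COX} step by step: split $\int_0^T=\int_0^\delta+\int_\delta^T$, bound the first piece by $C_T\delta(1+\|x\|_{H_1}^2)$ using the moment estimate just proved, and reduce the second piece, via the triangle inequality, to controlling $\mathbb{E}\int_\delta^T\|\bar{X}_t-\bar{X}_{t-\delta}\|_{H_1}^2\,dt$. Applying It\^o's formula to $\|\bar{X}_t-\bar{X}_{t-\delta}\|_{H_1}^2$ over $[t-\delta,t]$ produces three deterministic integrals (from $A_1$, from $\bar{f}$, and from $B_1$) plus a stochastic integral that vanishes in expectation, exactly as in \eref{F3.9}. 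The $A_1$-term is estimated by H\"older's inequality together with the growth condition $({\mathbf{A}}{\mathbf{4}})$ and the $V_1$-integral bound obtained above, precisely as in \eref{REGX1}, yielding a factor $\delta$; the $\bar{f}$- and $B_1$-terms are handled by the linear growth of $\bar{f}$ and the Lipschitz bound on $B_1$ in $({\mathbf{A}}{\mathbf{2}})$, each again producing a factor $\delta$ after invoking the moment bound and Fubini's theorem, as in \eref{REGX2}--\eref{REGX2a}. Collecting the pieces gives \eref{za1}.

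Routine bookkeeping aside, the only point requiring attention is the treatment of the Wasserstein/law-dependent terms, but these are tamed throughout by the domination $\mathbb{W}_{2,H_1}(\mathscr{L}_{\bar{X}_t},\mathscr{L}_{\bar{X}_{t-\delta}})^2\le\mathbb{E}\|\bar{X}_t-\bar{X}_{t-\delta}\|_{H_1}^2$ of distributional increments by pathwise increments, together with $\mathscr{L}_{\bar{X}_t}(\|\cdot\|_{H_1}^2)=\mathbb{E}\|\bar{X}_t\|_{H_1}^2$. Consequently no genuinely new difficulty arises beyond those already met in Lemmas \ref{PMY} and \ref{COX}; indeed, the absence of a fast component in \eref{3.1a} makes the present estimates strictly easier than their counterparts for $X^\varepsilon$.
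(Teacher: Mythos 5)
Your proposal is correct and follows precisely the route the paper intends: the paper gives no written proof of Lemma \ref{L3.8}, stating only that it follows ``using an argument similar to that in Lemma \ref{PMY} and Lemma \ref{COX},'' and your argument carries out exactly that analogy, correctly supplying the two ingredients the analogy requires --- the linear growth of $\bar f$ deduced from its Lipschitz property \eref{p0} via $\mathbb{W}_{2,H_1}(\mu,\delta_0)^2=\mu(\|\cdot\|_{H_1}^2)$, and the bound $\mathbb{E}\int_0^T\|\bar X_t\|_{V_1}^\alpha\,dt\leq C_T(1+\|x\|_{H_1}^2)$ needed for the $A_1$-term as in \eref{REGX1}. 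No gaps beyond routine bookkeeping (e.g.\ the second triangle-inequality term $\|\bar X_{t(\delta)}-\bar X_{t-\delta}\|_{H_1}$, handled exactly as \eref{F3.14}).
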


\medskip

Now we are in the position to finish the proof of the second main result.

\subsection{Proof of Theorem \ref{main result 1}}
The proof of Theorem \ref{main result 1} will be divided into the following three steps.

\textbf{Step 1}.
It is easy to see that $X_{t}^{\varepsilon}-\bar{X}_{t}$ satisfies the following equation
\begin{eqnarray*}\label{e5}
\left\{ \begin{aligned}
&d(X_{t}^{\varepsilon}-\bar{X}_{t})=\left[A_1\left(X^{\vare}_{t},\mathscr{L}_{X^{\vare}_{t}}\right)-A_1\left(\bar{X}_{t},\mathscr{L}_{\bar{X}_{t}}\right)+f\left(X^{\varepsilon}_t,\mathscr{L}_{X^{\varepsilon}_t},Y^{\varepsilon}_t\right)-\bar{f}\left(\bar{X}_{t},\mathscr{L}_{\bar{X}_{t}})\right)\right]dt\\ &~~~~~~~~~~~~~~~~~~+\left[B_1\left(X^{\vare}_{t},\mathscr{L}_{X^{\vare}_{t}}\right)-B_1\left(\bar{X}_{t},\mathscr{L}_{\bar{X}_{t}}\right)\right]dW_t^{1},\\
&X_{0}^{\varepsilon}-\bar{X}_{0}=0.
\end{aligned}\right.
\end{eqnarray*}
Thus, applying It\^{o}'s formula yields
\begin{eqnarray}
\|X_{t}^{\varepsilon}-\bar{X}_{t}\|_{H_1}^{2}=\!\!\!\!\!\!\!\!&&2\int_{0} ^{t}{}_{V_1^*}\langle A_1\left(X^{\vare}_{s},\mathscr{L}_{X^{\vare}_{s}}\right)-A_1\left(\bar{X}_{s},\mathscr{L}_{\bar{X}_{s}}\right), X_{s}^{\varepsilon}-\bar{X}_{s}\rangle_{V_1} ds
\nonumber \\
 \!\!\!\!\!\!\!\!&&+ 2\int_{0} ^{t}\langle f\left(X^{\varepsilon}_s,\mathscr{L}_{X^{\varepsilon}_s},Y^{\varepsilon}_t\right)-\bar{f}\left(\bar{X}_{s},\mathscr{L}_{\bar{X}_{s}}\right), X_{s}^{\varepsilon}-\bar{X}_{s}\rangle_{H_1} ds\nonumber \\
 \!\!\!\!\!\!\!\!&& +\int_{0} ^{t}\left\|B_1\left(X^{\vare}_{s},\mathscr{L}_{X^{\vare}_{s}}\right)-B_1\left(\bar{X}_{s},\mathscr{L}_{\bar{X}_{s}}\right)\right\|_{L_2(U_1,H_1)}^2ds
\nonumber \\
 \!\!\!\!\!\!\!\!&& +2\int_{0} ^{t}\left\langle  \left[B_1\left(X^{\vare}_{s},\mathscr{L}_{X^{\vare}_{s}}\right)-B_1\left(\bar{X}_{s},\mathscr{L}_{\bar{X}_{s}}\right)\right]dW^{1}_s,  X_{s}^{\varepsilon}-\bar{X}_{s} \right\rangle_{H_1} \nonumber\\
:=\!\!\!\!\!\!\!\!&&I_{1}(t)+I_{2}(t)+I_{3}(t)+I_{4}(t).  \label{p1}
\end{eqnarray}
By condition $({\mathbf{A}}{\mathbf{2}})$, we have
\begin{eqnarray}
\EE\left(\sup_{t\in[0,T]}(I_1(t)+I_3(t))\right)\leq \!\!\!\!\!\!\!\!&&
C\EE\int_0^T\|X_{t}^{\varepsilon}-\bar{X}_{t}\|_{H_1}^2+\mathbb{W}_{2,H_1}(\mathscr{L}_{X^{\vare}_{t}},\mathscr{L}_{\bar{X}_{t}})^2dt
\nonumber\\\leq\!\!\!\!\!\!\!\!&&
C\EE\int_0^T\|X_{t}^{\varepsilon}-\bar{X}_{t}\|_{H_1}^2dt.\label{p2}
\end{eqnarray}
Then by Burkholder-Davis-Gundy's inequality, condition $({\mathbf{A}}{\mathbf{2}})$, it holds that
\begin{eqnarray}
\EE\left(\sup_{t\in[0,T]}I_4(t)\right)\leq \!\!\!\!\!\!\!\!&&
C\EE\left[\int_0^T\|B_1\left(X^{\vare}_{t},\mathscr{L}_{X^{\vare}_{t}}\right)-B_1\left(\bar{X}_{t},\mathscr{L}_{\bar{X}_{t}}\right)\|_{L_2(U_1,H_1)}^2
\|X_{t}^{\varepsilon}-\bar{X}_{t}\|_{H_1}^2dt\right]^{1/2}
\nonumber \\\leq\!\!\!\!\!\!\!\!&&C\EE\left[\sup_{t\in[0,T]}\|X_{t}^{\varepsilon}-\bar{X}_{t}\|_{H_1}^2\int_0^T\|B_1\left(X^{\vare}_{t},\mathscr{L}_{X^{\vare}_{t}}\right)-B_1\left(\bar{X}_{t},\mathscr{L}_{\bar{X}_{t}}\right)\|_{L_2(U_1,H_1)}^2dt\right]^{1/2}
\nonumber\\\leq\!\!\!\!\!\!\!\!&&\frac{1}{2}\EE\left[\sup_{t\in[0,T]}\|X_{t}^{\varepsilon}-\bar{X}_{t}\|_{H_1}^2\right]
+C\EE\int_0^T\|X_{t}^{\varepsilon}-\bar{X}_{t}\|_{H_1}^2+\mathbb{W}_{2,H_1}(\mathscr{L}_{X^{\vare}_{t}},\mathscr{L}_{\bar{X}_{t}})^2dt
\nonumber\\\leq\!\!\!\!\!\!\!\!&&\frac{1}{2}\EE\left[\sup_{t\in[0,T]}\|X_{t}^{\varepsilon}-\bar{X}_{t}\|_{H_1}^2\right]
+C\EE\int_0^T\|X_{t}^{\varepsilon}-\bar{X}_{t}\|_{H_1}^2dt.\label{p4}
\end{eqnarray}
As for $I_2(t)$, we first rewrite it as
\begin{eqnarray}
I_2(t)=\!\!\!\!\!\!\!\!&&2\int_{0} ^{t}\langle f\left(X^{\varepsilon}_s,\mathscr{L}_{X^{\varepsilon}_s},Y^{\varepsilon}_s\right)
-f\left(X^{\varepsilon}_{s(\delta)},\mathscr{L}_{X^{\varepsilon}_{s(\delta)}},\hat{Y}^{\varepsilon}_s\right), X_{s}^{\varepsilon}-\bar{X}_{s}\rangle_{H_1} ds\nonumber \\
\nonumber \\
 \!\!\!\!\!\!\!\!&&+ 2\int_{0} ^{t}\langle f\left(X^{\varepsilon}_{s(\delta)},\mathscr{L}_{X^{\varepsilon}_{s(\delta)}},\hat{Y}^{\varepsilon}_s\right)-\bar{f}\left(X^{\varepsilon}_{s(\delta)},\mathscr{L}_{X^{\varepsilon}_{s(\delta)}}\right), X_{s}^{\varepsilon}-\bar{X}_{s}\rangle_{H_1} ds\nonumber \\
 \!\!\!\!\!\!\!\!&& + 2\int_{0} ^{t}\langle \bar{f}\left(X^{\varepsilon}_{s(\delta)},\mathscr{L}_{X^{\varepsilon}_{s(\delta)}}\right)-
 \bar{f}\left(X^{\varepsilon}_{s},\mathscr{L}_{X^{\varepsilon}_s}\right), X_{s}^{\varepsilon}-\bar{X}_{s}\rangle_{H_1} ds\nonumber \\
 \!\!\!\!\!\!\!\!&& + 2\int_{0} ^{t}{}\langle
 \bar{f}\left(X^{\varepsilon}_{s},\mathscr{L}_{X_s^{\varepsilon}}\right)
 -\bar{f}\left(\bar{X}_{s},\mathscr{L}_{\bar{X}_{s}}\right), X_{s}^{\varepsilon}-\bar{X}_{s}\rangle_{H_1} ds\nonumber \\
:=\!\!\!\!\!\!\!\!&&I_{21}(t)+I_{22}(t)+I_{23}(t)+I_{24}(t).  \label{p5}
\end{eqnarray}
According to $({\mathbf{A}}{\mathbf{2}})$, \eref{F3.7}, \eref{3.14}, and \eref{p0}, it is easy to see that
\begin{eqnarray}
\!\!\!\!\!\!\!\!&&\EE\left(\sup_{t\in[0,T]}(I_{21}(t)+I_{23}(t))\right)\nonumber \\\leq \!\!\!\!\!\!\!\!&&
C\EE\int_0^T\left(\|X_{t}^{\varepsilon}-X_{t(\delta)} ^{\varepsilon}\|_{H_1}+\mathbb{W}_{2,H_1}(\mathscr{L}_{X^{\vare}_{t}},\mathscr{L}_{X_{t(\delta)} ^{\varepsilon}})+\|Y_{t}^{\varepsilon}-\hat{Y}_{t} ^{\varepsilon}\|_{H_2}\right)\|X_{t}^{\varepsilon}-\bar{X}_{t}\|_{H_1}dt
\nonumber\\\leq\!\!\!\!\!\!\!\!&&
C\EE\int_0^T\|X_{t}^{\varepsilon}-\bar{X}_{t}\|_{H_1}^2dt
+C\EE\int_0^T\|X_{t}^{\varepsilon}-\bar{X}_{t(\delta)}\|_{H_1}^2dt
+C\EE\int_0^T\|Y_{t}^{\varepsilon}-\hat{Y}_{t} ^{\varepsilon}\|_{H_1}^2dt
\nonumber\\\leq\!\!\!\!\!\!\!\!&&
C_T\delta(1+\|x\|_{H_1}^2+\|y\|_{H_2}^2)+C\EE\int_0^T\|X_{t}^{\varepsilon}-\bar{X}_{t}\|_{H_1}^2dt
\label{p6}
\end{eqnarray}
and
\begin{eqnarray}
\EE\left(\sup_{t\in[0,T]}I_{24}(t)\right)\leq \!\!\!\!\!\!\!\!&&
C\EE\int_0^T\left(\|X_{t}^{\varepsilon}-\bar{X}_{t} ^{\varepsilon}\|_{H_1}+\mathbb{W}_{2,H_1}(\mathscr{L}_{X^{\vare}_{t}},\mathscr{L}_{\bar{X}_{t} ^{\varepsilon}})\right)\|X_{t}^{\varepsilon}-\bar{X}_{t}\|_{H_1}dt
\nonumber\\\leq\!\!\!\!\!\!\!\!&&
C\EE\int_0^T\|X_{t}^{\varepsilon}-\bar{X}_{t}\|_{H_1}^2dt.
\label{p7}
\end{eqnarray}
As for $I_{22}(t)$, we rewrite it as
\begin{eqnarray}
I_{22}(t)=\!\!\!\!\!\!\!\!&& 2\int_{0} ^{t}\langle f\left(X^{\varepsilon}_{s(\delta)},\mathscr{L}_{X^{\varepsilon}_{s(\delta)}},\hat{Y}^{\varepsilon}_s\right)-\bar{f}\left(X^{\varepsilon}_{s(\delta)},\mathscr{L}_{X^{\varepsilon}_{s(\delta)}}\right), X_{s}^{\varepsilon}-{X}^{\varepsilon}_{s(\delta)}\rangle_{H_1} ds\nonumber \\
 \!\!\!\!\!\!\!\!&& + 2\int_{0} ^{t}\langle f\left(X^{\varepsilon}_{s(\delta)},\mathscr{L}_{X^{\varepsilon}_{s(\delta)}},\hat{Y}^{\varepsilon}_s\right)-\bar{f}\left(X^{\varepsilon}_{s(\delta)},\mathscr{L}_{X^{\varepsilon}_{s(\delta)}}\right), X_{s(\delta)}^{\varepsilon}-{\bar{X}}_{s(\delta)}\rangle_{H_1} ds\nonumber \\
 \!\!\!\!\!\!\!\!&& + 2\int_{0} ^{t}\langle f\left(X^{\varepsilon}_{s(\delta)},\mathscr{L}_{X^{\varepsilon}_{s(\delta)}},\hat{Y}^{\varepsilon}_s\right)-\bar{f}\left(X^{\varepsilon}_{s(\delta)},\mathscr{L}_{X^{\varepsilon}_{s(\delta)}}\right), \bar{X}_{s(\delta)}-{\bar{X}}_{s}\rangle_{H_1} ds\nonumber \\
:=\!\!\!\!\!\!\!\!&&J_{1}(t)+J_{2}(t)+J_{3}(t).  \label{p8}
\end{eqnarray}
By Lemma \ref{PMY} and Lemma \ref{COX}, we obtain
\begin{eqnarray}
\!\!\!\!\!\!\!\!&&\EE\left(\sup_{t\in[0,T]}J_1(t)\right)\nonumber\\\leq \!\!\!\!\!\!\!\!&&
C\EE\int_0^T\|f(X^{\varepsilon}_{t(\delta)},\mathscr{L}_{X^{\varepsilon}_{t(\delta)}},\hat{Y}^{\varepsilon}_t)-\bar{f}(X^{\varepsilon}_{t(\delta)},\mathscr{L}_{X^{\varepsilon}_{t(\delta)}})\|_{H_1}\|X_{t}^{\varepsilon}-{X}_{t(\delta)}^{\varepsilon}\|_{H_1}dt
\nonumber\\\leq\!\!\!\!\!\!\!\!&&
C\left[\EE\int_0^T(1+\|X^{\varepsilon}_{t(\delta)}\|_{H_1}^2+\mathscr{L}_{X^{\varepsilon}_{t(\delta)}}(\|\cdot\|_{H_1}^2)+\|\hat{Y}^{\varepsilon}_{t}\|_{H_2}^2)dt\right]^{1/2}
\left[\EE\int_0^T\|X_{t}^{\varepsilon}-{X}_{t(\delta)}^{\varepsilon}\|_{H_1}^2dt\right]^{1/2}
\nonumber\\\leq\!\!\!\!\!\!\!\!&&
C_T\delta^{1/2}(1+\|x\|_{H_1}^2+\|y\|_{H_2}^2).\label{p9}
\end{eqnarray}
Similarly, by Lemma \ref{L3.8} we can also get
\begin{eqnarray}
\EE\left(\sup_{t\in[0,T]}J_3(t)\right)\leq
C_T\delta^{1/2}(1+\|x\|_{H_1}^2+\|y\|_{H_2}^2).\label{p10}
\end{eqnarray}
Thus, combining \eref{p1}-\eref{p10} yields
\begin{eqnarray}
\EE\left[\sup_{t\in[0,T]}\|X_{t}^{\varepsilon}-\bar{X}_{t}\|_{H_1}^2\right]\leq \!\!\!\!\!\!\!\!&& 2\EE\left(\sup_{t\in[0,T]}J_2(t)\right)+ C_T\delta^{1/2}(1+\|x\|_{H_1}^2+\|y\|_{H_2}^2)
\nonumber \\
 \!\!\!\!\!\!\!\!&& +C\EE\int_0^T\|X_{t}^{\varepsilon}-\bar{X}_{t}\|_{H_1}^2dt.\label{p11}
\end{eqnarray}

\textbf{Step 2}. In this step, we will use the time discretization technique to deal with $J_2(t)$. Note that
\begin{eqnarray}
 |J_2(t)|=\!\!\!\!\!\!\!\!&&2\left|\sum_{k=0}^{[t/\delta]-1}\int_{k\delta} ^{(k+1)\delta}\left\langle f\left(X^{\varepsilon}_{s(\delta)},\mathscr{L}_{X^{\varepsilon}_{s(\delta)}},\hat{Y}^{\varepsilon}_s\right)-\bar{f}\left(X^{\varepsilon}_{s(\delta)},\mathscr{L}_{X^{\varepsilon}_{s(\delta)}}\right), X_{s(\delta)}^{\varepsilon}-{\bar{X}}_{s(\delta)}\right\rangle_{H_1} ds\right.\nonumber \\
 \!\!\!\!\!\!\!\!&& +\left.\int_{t(\delta)} ^{t}\left\langle f\left(X^{\varepsilon}_{s(\delta)},\mathscr{L}_{X^{\varepsilon}_{s(\delta)}},\hat{Y}^{\varepsilon}_s\right)-\bar{f}\left(X^{\varepsilon}_{s(\delta)},\mathscr{L}_{X^{\varepsilon}_{s(\delta)}}\right), X_{s(\delta)}^{\varepsilon}-{\bar{X}}_{s(\delta)}\right\rangle_{H_1} ds\right|\nonumber \\
 \leq\!\!\!\!\!\!\!\!&&2\sum_{k=0}^{[t/\delta]-1}\left|\int_{k\delta} ^{(k+1)\delta}\left\langle f\left(X^{\varepsilon}_{s(\delta)},\mathscr{L}_{X^{\varepsilon}_{s(\delta)}},\hat{Y}^{\varepsilon}_s\right)-\bar{f}\left(X^{\varepsilon}_{s(\delta)},\mathscr{L}_{X^{\varepsilon}_{s(\delta)}}\right), X_{s(\delta)}^{\varepsilon}-{\bar{X}}_{s(\delta)}\right\rangle_{H_1} ds\right|\nonumber \\
 \!\!\!\!\!\!\!\!&& +2\left|\int_{t(\delta)} ^{t}\left\langle f\left(X^{\varepsilon}_{s(\delta)},\mathscr{L}_{X^{\varepsilon}_{s(\delta)}},\hat{Y}^{\varepsilon}_s\right)-\bar{f}\left(X^{\varepsilon}_{s(\delta)},\mathscr{L}_{X^{\varepsilon}_{s(\delta)}}\right), X_{s(\delta)}^{\varepsilon}-{\bar{X}}_{s(\delta)}\right\rangle_{H_1} ds\right|\nonumber \\
:=\!\!\!\!\!\!\!\!&&J_{21}(t)+J_{22}(t).  \label{p12}
\end{eqnarray}
By Lemma \ref{PMY} and Lemma \ref{COX}, it is easy to prove that
\begin{eqnarray}
\!\!\!\!\!\!\!\!&&\EE\left(\sup_{t\in[0,T]}J_{22}(t)\right)\nonumber\\\leq \!\!\!\!\!\!\!\!&&
C\left[\EE\sup_{t\in[0,T]}\int_{t(\delta)} ^{t}\left(1+\|X^{\varepsilon}_{s(\delta)}\|_{H_1}^2+\mathscr{L}_{X^{\varepsilon}_{s(\delta)}}(\|\cdot\|_{H_1}^2)+\|\hat{Y}^{\varepsilon}_{s}\|_{H_2}^2\right)ds\right]^{1/2}
\nonumber\\ \!\!\!\!\!\!\!\!&&\times\left[\EE\sup_{t\in[0,T]}\int_{t(\delta)}^{t}\|X_{s(\delta)}^{\varepsilon}-\bar{X}_{s(\delta)}\|_{H_1}^2ds\right]^{1/2}
\nonumber\\\leq\!\!\!\!\!\!\!\!&&
C\delta^{1/2}\left[\EE\int_{0}^{T}\left(1+\|X^{\varepsilon}_{t(\delta)}\|_{H_1}^2+\mathscr{L}_{X^{\varepsilon}_{t(\delta)}}(\|\cdot\|_{H_1}^2)+\|\hat{Y}^{\varepsilon}_{t}\|_{H_2}^2\right)dt\right]^{1/2}
\left[\EE\sup_{t\in[0,T]}\|X_{t}^{\varepsilon}-\bar{X}_{t}\|_{H_1}^2\right]^{1/2}
\nonumber\\\leq\!\!\!\!\!\!\!\!&&
C_T\delta^{1/2}(1+\|x\|_{H_1}^2+\|y\|_{H_2}^2).\label{p13}
\end{eqnarray}
As for the term $J_{21}(t)$, we can control it as follows.
\begin{eqnarray}
\!\!\!\!\!\!\!\!&&\EE\left(\sup_{t\in[0,T]}J_{21}(t)\right)\nonumber\\
\leq\!\!\!\!\!\!\!\!&&2\EE\sum_{k=0}^{[T/\delta]-1}\left|\int_{k\delta} ^{(k+1)\delta}\left\langle f\left(X^{\varepsilon}_{k\delta},\mathscr{L}_{X^{\varepsilon}_{k\delta}},\hat{Y}^{\varepsilon}_s\right)-\bar{f}\left(X^{\varepsilon}_{k\delta},\mathscr{L}_{X^{\varepsilon}_{k\delta}}\right), X_{k\delta}^{\varepsilon}-{\bar{X}}_{k\delta}\right\rangle_{H_1} ds\right|\nonumber \\
\leq\!\!\!\!\!\!\!\!&&\frac{C_T}{\delta}\max_{0\leq k\leq[T/\delta]-1}\EE\left|\int_{k\delta} ^{(k+1)\delta}\left\langle f\left(X^{\varepsilon}_{k\delta},\mathscr{L}_{X^{\varepsilon}_{k\delta}},\hat{Y}^{\varepsilon}_s\right)-\bar{f}\left(X^{\varepsilon}_{k\delta},\mathscr{L}_{X^{\varepsilon}_{k\delta}}\right), X_{k\delta}^{\varepsilon}-{\bar{X}}_{k\delta}\right\rangle_{H_1} ds\right|\nonumber \\
\leq\!\!\!\!\!\!\!\!&&\frac{C_T\varepsilon}{\delta}\max_{0\leq k\leq[T/\delta]-1}\left[\EE\left\|\int_{0} ^{\frac{\delta}{\varepsilon}} f\left(X^{\varepsilon}_{k\delta},\mathscr{L}_{X^{\varepsilon}_{k\delta}},\hat{Y}^{\varepsilon}_{s\varepsilon+k\delta}\right)-\bar{f}\left(X^{\varepsilon}_{k\delta},\mathscr{L}_{X^{\varepsilon}_{k\delta}}\right)ds\right\|_{H_1}^2 \right]^{1/2}
\nonumber \\
\!\!\!\!\!\!\!\!&&\cdot\left[\sup_{t\in[0,T]}\EE\|X_{t}^{\varepsilon}-{\bar{X}}_{t}\|_{H_1}^2\right]^{1/2}\nonumber \\
\leq\!\!\!\!\!\!\!\!&&\frac{C_T\varepsilon^2}{\delta^2}\max_{0\leq k\leq[T/\delta]-1}\left[\EE\left\|\int_{0} ^{\frac{\delta}{\varepsilon}} f\left(X^{\varepsilon}_{k\delta},\mathscr{L}_{X^{\varepsilon}_{k\delta}},\hat{Y}^{\varepsilon}_{s\varepsilon+k\delta}\right)-\bar{f}\left(X^{\varepsilon}_{k\delta},\mathscr{L}_{X^{\varepsilon}_{k\delta}}\right)ds\right\|_{H_1}^2 \right]
\nonumber \\
\!\!\!\!\!\!\!\!&&+\frac{1}{4}\left[\sup_{t\in[0,T]}\EE\|X_{t}^{\varepsilon}-{\bar{X}}_{t}\|_{H_1}^2\right]
\nonumber \\
\leq\!\!\!\!\!\!\!\!&&
\frac{C_T\varepsilon^2}{\delta^2}\max_{0\leq k\leq[T/\delta]-1}\left[\int_{0} ^{\frac{\delta}{\varepsilon}} \int_{r} ^{\frac{\delta}{\varepsilon}}\Phi_k(s,r)dsdr \right]+\frac{1}{4}\EE\left[\sup_{t\in[0,T]}\|X_{t}^{\varepsilon}-{\bar{X}}_{t}\|_{H_1}^2\right],\label{w1}
\end{eqnarray}
where for any $0\leq r\leq s\leq \frac{\delta}{\varepsilon}$,
\begin{eqnarray*}
\Phi_k(s,r):=\!\!\!\!\!\!\!\!&&\EE\left[\left\langle f\left(X^{\varepsilon}_{k\delta},\mathscr{L}_{X^{\varepsilon}_{k\delta}},\hat{Y}^{\varepsilon}_{s\varepsilon+k\delta}\right)-\bar{f}\left(X^{\varepsilon}_{k\delta},\mathscr{L}_{X^{\varepsilon}_{k\delta}}\right),
\right.\right.\nonumber \\
\!\!\!\!\!\!\!\!&&\left.\left.~~~~~~f\left(X^{\varepsilon}_{k\delta},\mathscr{L}_{X^{\varepsilon}_{k\delta}},\hat{Y}^{\varepsilon}_{r\varepsilon+k\delta}\right)-\bar{f}\left(X^{\varepsilon}_{k\delta},\mathscr{L}_{X^{\varepsilon}_{k\delta}}\right)\right\rangle_{H_1}\right].
\end{eqnarray*}

For any $s>0$, $\mu\in\mathscr{P}_2(H_1)$, and any $\mathscr{F}_s$-measurable $H_1$-valued random variable $X$ and $H_2$-valued random variable $Y$, we consider the following equation
\begin{eqnarray}\label{p14}
\left\{ \begin{aligned}
&d\tilde{Y}_{t}=\frac{1}{\varepsilon}[A_2(X,\mu,\tilde{Y}_{t})]dt+\frac{1}{\sqrt{\varepsilon}}B_2(X,\mu,\tilde{Y}_t)d{{W}}_{t}^{2},~~t\geq s,\\
&\tilde{Y}_{s}=Y.
\end{aligned} \right.
\end{eqnarray}
 Then, by \cite[Theorem 4.2.4]{LR1}, it is easy to see that Eq.~\eref{p14} has a unique solution denoted by $\tilde{Y}_t^{\varepsilon,s,X,\mu,Y}$.
 Following the construction of $\hat{Y}_t^\varepsilon$ in \eref{4.6a}, for any $k\in \mathbb{N}$, it is easy to check that
 $$\hat{Y}_t^\varepsilon=\tilde{Y}_t^{\varepsilon,k\delta,X^\varepsilon_{k\delta},
 \mathscr{L}_{X^{\varepsilon}_{k\delta}},\hat{Y}^\varepsilon_{k\delta}},~~
 t\in[k\delta,(k+1)\delta].$$
Thus, we have
\begin{eqnarray*}
\Phi_k(s,r)=\!\!\!\!\!\!\!\!&&\EE\left[\left\langle f\left(X^{\varepsilon}_{k\delta},\mathscr{L}_{X^{\varepsilon}_{k\delta}},\tilde{Y}^{\varepsilon,k\delta,X_{k\delta}^\varepsilon,\mathscr{L}_{X^{\varepsilon}_{k\delta}},\hat{Y}^\varepsilon_{k\delta}}_{s\varepsilon+k\delta}\right)
-\bar{f}\left(X^{\varepsilon}_{k\delta},\mathscr{L}_{X^{\varepsilon}_{k\delta}}\right),
\right.\right.\nonumber \\
\!\!\!\!\!\!\!\!&&\left.\left.~~~~~~f\left(X^{\varepsilon}_{k\delta},\mathscr{L}_{X^{\varepsilon}_{k\delta}},\tilde{Y}^{\varepsilon,k\delta,X_{k\delta}^\varepsilon,\mathscr{L}_{X^{\varepsilon}_{k\delta}},\hat{Y}^\varepsilon_{k\delta}}_{r\varepsilon+k\delta}\right)-\bar{f}\left(X^{\varepsilon}_{k\delta},\mathscr{L}_{X^{\varepsilon}_{k\delta}}\right)\right\rangle_{H_1}\right].
\end{eqnarray*}
Note that for any fixed $x\in H_1$ and
$y\in H_2$, $\tilde{Y}_{s\varepsilon+k\delta}^{\varepsilon,k\delta,x,\mu,y}$ is independent of $\mathscr{F}_{k\delta}$, and $X_{k\delta}^\varepsilon$, $\hat{Y}_{k\delta}^\varepsilon$ are $\mathscr{F}_{k\delta}$-measurable, thus we have
\begin{eqnarray*}
\Phi_k(s,r)=\!\!\!\!\!\!\!\!&&\EE\Big\{\EE\Big[\langle f\big(X^{\varepsilon}_{k\delta},\mathscr{L}_{X^{\varepsilon}_{k\delta}},\tilde{Y}^{\varepsilon,k\delta,X_{k\delta}^\varepsilon,\mathscr{L}_{X^{\varepsilon}_{k\delta}},\hat{Y}^\varepsilon_{k\delta}}_{s\varepsilon+k\delta}\big)-\bar{f}\big(X^{\varepsilon}_{k\delta},\mathscr{L}_{X^{\varepsilon}_{k\delta}}\big),
\nonumber\\
\!\!\!\!\!\!\!\!&&~~~~~~f\big(X^{\varepsilon}_{k\delta},\mathscr{L}_{X^{\varepsilon}_{k\delta}},\tilde{Y}^{\varepsilon,k\delta,X_{k\delta}^\varepsilon,\mathscr{L}_{X^{\varepsilon}_{k\delta}},\hat{Y}^\varepsilon_{k\delta}}_{r\varepsilon+k\delta}\big)-\bar{f}\big(X^{\varepsilon}_{k\delta},
\mathscr{L}_{X^{\varepsilon}_{k\delta}}\big)\rangle_{H_1}\big|\mathscr{F}_{k\delta}\Big] \Big\}
\nonumber\\
=\!\!\!\!\!\!\!\!&&
\EE\Big\{\EE\Big[\langle f\big(x,\mathscr{L}_{X^{\varepsilon}_{k\delta}},\tilde{Y}^{\varepsilon,k\delta,x,\mathscr{L}_{X^{\varepsilon}_{k\delta}},y}_{s\varepsilon+k\delta}\big)-\bar{f}\big(x,\mathscr{L}_{X^{\varepsilon}_{k\delta}}\big),
\nonumber\\
\!\!\!\!\!\!\!\!&&~~~~~~f\big(x,\mathscr{L}_{X^{\varepsilon}_{k\delta}},\tilde{Y}^{\varepsilon,k\delta,x,\mathscr{L}_{X^{\varepsilon}_{k\delta}},y}_{r\varepsilon+k\delta}\big)-\bar{f}\big(x,
\mathscr{L}_{X^{\varepsilon}_{k\delta}}\big)\rangle_{H_1}\Big]\Big|_{(x,y)=(X^{\varepsilon}_{k\delta},\hat{Y}^\varepsilon_{k\delta})}\Big\}.
\end{eqnarray*}
Recall the definition of the process $\{\tilde{Y}_{s\varepsilon+k\delta}^{\varepsilon,k\delta,x,\mu,y}\}_{s\geq0}$, it is easy to see that
\begin{eqnarray}
\tilde{Y}_{s\varepsilon+k\delta}^{\varepsilon,k\delta,x,\mu,y}
=\!\!\!\!\!\!\!\!&&y+\frac{1}{\varepsilon}\int_{k\delta}^{s\varepsilon+k\delta}A_2(x,\mu,\tilde{Y}_{r}^{\varepsilon,k\delta,x,\mu,y})dr
+\frac{1}{\sqrt{\varepsilon}}\int_{k\delta}^{s\varepsilon+k\delta}B_2(x,\mu,\tilde{Y}_{r}^{\varepsilon,k\delta,x,\mu,y})d{{W}}_{r}^{2}
\nonumber \\
=\!\!\!\!\!\!\!\!&&
y+\frac{1}{\varepsilon}\int_{0}^{s\varepsilon}A_2(x,\mu,\tilde{Y}_{r+k\delta}^{\varepsilon,k\delta,x,\mu,y})dr
+\frac{1}{\sqrt{\varepsilon}}\int_{0}^{s\varepsilon}B_2(x,\mu,\tilde{Y}_{r+k\delta}^{\varepsilon,k\delta,x,\mu,y})d{{W}}_{r}^{2,k\delta}
\nonumber \\
=\!\!\!\!\!\!\!\!&&
y+\int_{0}^{s}A_2(x,\mu,\tilde{Y}_{r\varepsilon+k\delta}^{\varepsilon,k\delta,x,\mu,y})dr
+\int_{0}^{s}B_2(x,\mu,\tilde{Y}_{r\varepsilon
+k\delta}^{\varepsilon,k\delta,x,\mu,y})d{\hat{W}}_{r}^{2,k\delta},\label{p20}
\end{eqnarray}
where $$\Big\{W_{r}^{2,k\delta}:=W_{r+k\delta}^{2}-W_{k\delta}^{2}\Big\}_{r\geq0}~~\text{and}~~\Big\{\hat{W}_{r}^{2,k\delta}:=\frac{1}{\sqrt{\varepsilon}}W_{r\varepsilon}^{2,k\delta}\Big\}_{r\geq0}.$$

Note that the solution of the frozen equation satisfies
\begin{eqnarray}
{Y}_{s}^{x,\mu,y}
=
y+\int_{0}^{s}A_2(x,\mu,{Y}_{r}^{x,\mu,y})dr
+\int_{0}^{s}B_2(x,\mu,{Y}_{r}^{x,\mu,y})d\tilde{W}_{r}^{2}.\label{p21}
\end{eqnarray}
Then, the uniqueness of the solution of \eref{p20} and \eref{p21} implies that the distribution of $\left\{\tilde{Y}_{s\varepsilon+k\delta}^{\varepsilon,k\delta,x,\mu,y}\right\}_{0\leq s\leq\frac{\delta}{\varepsilon}}$ coincides with the distribution $\left\{{Y}_{s}^{x,\mu,y}\right\}_{0\leq s\leq\frac{\delta}{\varepsilon}}.$ Thus, using Markov and time-homogenous properties of process ${Y}_{s}^{x,\mu,y}$, we have

\begin{eqnarray}
\!\!\!\!\!\!\!\!&&\Phi_k(s,r)
\nonumber \\=\!\!\!\!\!\!\!\!&&
\EE\Big\{\tilde{\EE}\Big[\Big\langle f\Big(x,\mathscr{L}_{X^{\varepsilon}_{k\delta}},{Y}^{x,\mathscr{L}_{X^{\varepsilon}_{k\delta}},y}_{s}\Big)-\bar{f}\Big(x,\mathscr{L}_{X^{\varepsilon}_{k\delta}}\Big),
\nonumber \\
\!\!\!\!\!\!\!\!&&~~~~~~
f\Big(x,\mathscr{L}_{X^{\varepsilon}_{k\delta}},
{Y}^{x,\mathscr{L}_{X^{\varepsilon}_{k\delta}},y}_{r}\Big)-\bar{f}\Big(x,
\mathscr{L}_{X^{\varepsilon}_{k\delta}}\Big)\Big\rangle_{H_1}\Big]\Big|_{(x,y)=(X^{\varepsilon}_{k\delta},\hat{Y}^\varepsilon_{k\delta})}\Big\}
\nonumber \\
=\!\!\!\!\!\!\!\!&&
\EE\Big\{\tilde{\EE}\Big[\Big\langle \tilde{\EE}\Big[f\Big(x,\mathscr{L}_{X^{\varepsilon}_{k\delta}},{Y}^{x,\mathscr{L}_{X^{\varepsilon}_{k\delta}},y}_{s}\Big)-\bar{f}\Big(x,\mathscr{L}_{X^{\varepsilon}_{k\delta}}\Big)\big|\tilde{\mathscr{F}}_{r}\Big],
\nonumber \\
\!\!\!\!\!\!\!\!&&~~~~~~f\Big(x,\mathscr{L}_{X^{\varepsilon}_{k\delta}},
{Y}^{x,\mathscr{L}_{X^{\varepsilon}_{k\delta}},y}_{r}\Big)-\bar{f}\Big(x,
\mathscr{L}_{X^{\varepsilon}_{k\delta}}\Big)\Big\rangle_{H_1}\Big]\Big|_{(x,y)=(X^{\varepsilon}_{k\delta},\hat{Y}^\varepsilon_{k\delta})}\Big\}
\nonumber \\
=\!\!\!\!\!\!\!\!&&
\EE\Big\{\tilde{\EE}\Big[\Big\langle \tilde{\EE}\Big[f\Big(x,\mathscr{L}_{X^{\varepsilon}_{k\delta}},{Y}^{x,\mathscr{L}_{X^{\varepsilon}_{k\delta}},z}_{s-r}\Big)-\bar{f}\Big(x,\mathscr{L}_{X^{\varepsilon}_{k\delta}}\Big)\Big]\mathbf{1}_{\{z=Y^{x,\mathscr{L}_{X^{\varepsilon}_{k\delta}},y}_r\}},
\nonumber \\
\!\!\!\!\!\!\!\!&&~~~~~~f\Big(x,\mathscr{L}_{X^{\varepsilon}_{k\delta}},
{Y}^{x,\mathscr{L}_{X^{\varepsilon}_{k\delta}},y}_{r}\Big)-\bar{f}\Big(x,
\mathscr{L}_{X^{\varepsilon}_{k\delta}}\Big)\Big\rangle_{H_1}\Big]\Big|_{(x,y)=(X^{\varepsilon}_{k\delta},\hat{Y}^\varepsilon_{k\delta})}\Big\}.
\nonumber
\end{eqnarray}

Therefore, according to {Proposition} \ref{Rem 4.0} and \ref{Rem 4.1}, we arrive
\begin{eqnarray}
\Phi_k(s,r)
\leq\!\!\!\!\!\!\!\!&&
C_T\EE\left\{\tilde{\EE}\left[1+\|X^{\varepsilon}_{k\delta}\|_{H_1}^2+\mathscr{L}_{X^{\varepsilon}_{k\delta}}(\|\cdot\|_{H_1}^2)+\|{Y}^{X_{k\delta}^\varepsilon,\mathscr{L}_{X^{\varepsilon}_{k\delta}},\hat{Y}^\varepsilon_{k\delta}}_{r}\|_{H_2}^2\right]e^{-\frac{(s-r)\rho}{2}}\right\}
\nonumber \\\leq\!\!\!\!\!\!\!\!&&
C_T\EE\left(1+\|X^{\varepsilon}_{k\delta}\|_{H_1}^2+\mathscr{L}_{X^{\varepsilon}_{k\delta}}(\|\cdot\|_{H_1}^2)+\|\hat{{Y}}^{\varepsilon}_{k\delta}\|_{H_2}^2\right)e^{-\frac{(s-r)\rho}{2}}
\nonumber \\\leq\!\!\!\!\!\!\!\!&& C_T(1+\|x\|_{H_1}^2+\|y\|_{H_2}^2)e^{-\frac{(s-r)\rho}{2}}.\label{w2}
\end{eqnarray}
By \eref{w1} and \eref{w2}, we deduce that
\begin{eqnarray}
\EE\left(\sup_{t\in[0,T]}J_{21}(t)\right)
\leq\!\!\!\!\!\!\!\!&&
C_T(1+\|x\|_{H_1}^2+\|y\|_{H_2}^2)\frac{\varepsilon^2}{\delta^2}\left[\int_{0} ^{\frac{\delta}{\varepsilon}} \int_{r} ^{\frac{\delta}{\varepsilon}}e^{-\frac{(s-r)\rho}{2}}dsdr \right]
\nonumber \\
\!\!\!\!\!\!\!\!&&
+\frac{1}{4}\EE\left[\sup_{t\in[0,T]}\|X_{t}^{\varepsilon}-{\bar{X}}_{t}\|_{H_1}^2\right]
\nonumber \\
=\!\!\!\!\!\!\!\!&&
C_T(1+\|x\|_{H_1}^2+\|y\|_{H_2}^2)\frac{\varepsilon^2}{\delta^2}
\left(\frac{2\delta}{\rho\varepsilon}-\frac{4}{\rho^2}+\frac{4}{\rho^2}e^{\frac{-\rho\delta}{\varepsilon}}\right)
\nonumber \\
\!\!\!\!\!\!\!\!&&
+\frac{1}{4}\EE\left[\sup_{t\in[0,T]}\|X_{t}^{\varepsilon}-{\bar{X}}_{t}\|_{H_1}^2\right]
\nonumber \\\leq\!\!\!\!\!\!\!\!&&
C_T(1+\|x\|_{H_1}^2+\|y\|_{H_2}^2)
\left(\frac{\varepsilon^2}{\delta^2}+\frac{\varepsilon}{\delta}\right)+\frac{1}{4}\EE\left[\sup_{t\in[0,T]}\|X_{t}^{\varepsilon}-{\bar{X}}_{t}\|_{H_1}^2\right].~~~\label{w3}
\end{eqnarray}

\textbf{Step 3}. Now, we are in the position to complete the proof. Combining \eref{p11}-\eref{p13} and \eref{w3} yields
\begin{eqnarray}
\EE\left[\sup_{t\in[0,T]}\|X_{t}^{\varepsilon}-\bar{X}_{t}\|_{H_1}^2\right]\leq \!\!\!\!\!\!\!\!&& C_T(1+\|x\|_{H_1}^2+\|y\|_{H_2}^2)
\left(\frac{\varepsilon^2}{\delta^2}+\frac{\varepsilon}{\delta}+\delta^{1/2}\right)
\nonumber \\
 \!\!\!\!\!\!\!\!&& +C\EE\int_0^T\|X_{t}^{\varepsilon}-\bar{X}_{t}\|_{H_1}^2dt.\nonumber
\end{eqnarray}
Using the Gronwall's inequality yields
\begin{eqnarray}
\EE\left[\sup_{t\in[0,T]}\|X_{t}^{\varepsilon}-\bar{X}_{t}\|_{H_1}^2\right]\leq \!\!\!\!\!\!\!\!&& C_T(1+\|x\|_{H_1}^2+\|y\|_{H_2}^2)
\left(\frac{\varepsilon^2}{\delta^2}+\frac{\varepsilon}{\delta}+\delta^{1/2}\right)
.~~\label{w4}
\end{eqnarray}
Then, by taking $\delta=\varepsilon^{2/3}$ in \eref{w4} we deduce that
\begin{eqnarray}
\EE\left[\sup_{t\in[0,T]}\|X_{t}^{\varepsilon}-\bar{X}_{t}\|_{H_1}^2\right]\leq \!\!\!\!\!\!\!\!&& C_T(1+\|x\|_{H_1}^2+\|y\|_{H_2}^2)\varepsilon^{1/3}.\nonumber
\end{eqnarray}
The proof is complete. \hspace{\fill}$\Box$

\section{Application to examples}\label{sec5}
\setcounter{equation}{0}
 \setcounter{definition}{0}
 In this section, we shall apply the main results  in Theorem \ref{Th1} and \ref{main result 1} to various two-time-scale Mckean-Vlasov SPDE models, which also generalize some existing works in the literature from  classical SPDEs to distribution dependent case.

Throughout this section, we assume $\Lambda\subset\mathbb{R}^d$ as a bounded domain with smooth boundary $\partial\Lambda$. Let
$C_0^\infty(\Lambda, \mathbb{R}^d)$ be the space of all smooth functions from $\Lambda$ to $\mathbb{R}^d$ with compact support.  For any $r\ge 1$, let $L^r(\Lambda, \mathbb{R}^d)$ be the vector valued $L^r$-space with the norm $\|\cdot\|_{L^r}$.
For any integer $m>0$, we denote by $W_0^{m,r}(\Lambda, \mathbb{R}^d)$ the classical Sobolev space (with Dirichlet boundary condition) from domain $\Lambda$
to $\mathbb{R}^d$ equipped with the (equivalent) norm
$$ \|u\|_{W^{m,r}} = \left( \sum_{ |\alpha|= m} \int_{\Lambda} |D^\alpha u|^rd x \right)^\frac{1}{r}.$$

\subsection{Slow-fast Mckean-Vlasov stochastic porous media equation}
The first example is the Mckean-Vlasov stochastic porous media type equation, which is the dynamics of gas flow in a porous medium (cf.~e.g.~\cite{BDR2,DGT,Gess1,V07}). More precisely, we consider the following slow-fast McKean-Vlasov stochastic evolution equations
\begin{equation}\label{ex1}
\left\{ \begin{aligned}
&dX^{\varepsilon}_t=\left[\Delta\Psi(X^{\varepsilon}_t,\mathscr{L}_{X^{\varepsilon}_t})
+f(X^{\varepsilon}_t,\mathscr{L}_{X^{\varepsilon}_t},Y^{\varepsilon}_t)\right]dt
+B_1(X^{\varepsilon}_t,\mathscr{L}_{X^{\varepsilon}_t})dW^1_t,\\
&dY^{\varepsilon}_t=\frac{1}{\varepsilon}[\Delta Y^{\varepsilon}_t
+g(X^{\varepsilon}_t,\mathscr{L}_{X^{\varepsilon}_t},Y^{\varepsilon}_t)]dt+\frac{1}{\sqrt{\varepsilon}}B_2(X^{\varepsilon}_t,\mathscr{L}_{X^{\varepsilon}_t},Y^{\varepsilon}_t)d W^{2}_{t},
\\&X^{\varepsilon}_0=x, Y^{\varepsilon}_0=y,
\end{aligned} \right.
\end{equation}
where $\Delta$ denotes the Laplace operator, and $\Psi,f,g,B_1,B_2$ satisfy some assumptions below.

For any $r\geq 2$, we set the following Gelfand triple for the slow equation
$$V_1:=L^r(\Lambda)\subset  H_1:= (W_0^{1,2}(\Lambda))^*  \subset  V_1^*,$$
and the following Gelfand triple for the fast equation
$$V_2:=W_0^{1,2}(\Lambda)\subset  H_2:= L^{2}(\Lambda)  \subset  V_2^*.$$

We recall the following useful lemma (see e.g.\cite[Lemma 4.1.13]{LR1}).
\begin{lemma}\label{l7}
The map
$$\Delta: W_0^{1,2}(\Lambda)\to (L^r(\Lambda))^*$$
could be extend to a linear isometry
$$\Delta:L^{\frac{r}{r-1}}(\Lambda)\to (L^r(\Lambda))^*.$$
Furthermore, for any $u\in L^{\frac{r}{r-1}}(\Lambda)$, $v\in L^r(\Lambda)$ we have
$$_{{V_1}^*}\langle-\Delta u,v\rangle_{V_1}=_{L^{\frac{r}{r-1}}}\langle u,v\rangle_{L^r}=\int_\Lambda u(\xi)v(\xi)d\xi.$$
\end{lemma}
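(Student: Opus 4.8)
The plan is to exploit the fact that, once $W_0^{1,2}(\Lambda)$ is equipped with the Dirichlet (gradient) norm, the negative Laplacian $-\Delta\colon W_0^{1,2}(\Lambda)\to (W_0^{1,2}(\Lambda))^*=H_1$ is exactly the Riesz isometric isomorphism of the pivot space $H_1=H^{-1}(\Lambda)$ of the Gelfand triple. The real content of the lemma is that there are two distinct natural realizations of the dual of $V_1=L^r(\Lambda)$: the usual Banach-space realization $L^{\frac{r}{r-1}}(\Lambda)$ under the pairing $\int_\Lambda u v\,d\xi$, and the Gelfand-triple realization $V_1^*$ obtained through the pivot $H_1$; the operator $-\Delta$ is the isometric bridge identifying them. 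So the whole argument reduces to computing the pairing $_{V_1^*}\langle-\Delta u,v\rangle_{V_1}$ and then the associated dual norm.

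First I would verify the pairing identity on a convenient dense class, say $u\in W_0^{1,2}(\Lambda)$ and $v\in L^r(\Lambda)$. Since $-\Delta u\in H_1$, the Gelfand-triple dualization collapses to the pivot inner product, $_{V_1^*}\langle-\Delta u,v\rangle_{V_1}=\langle-\Delta u,v\rangle_{H_1}$, and using that $-\Delta$ is the Riesz map of $H_1=H^{-1}$ together with the symmetry of the Laplacian (equivalently, integration by parts against the solution $w$ of the auxiliary Poisson problem $-\Delta w=v$, noting $v\in L^r\subset L^2\subset H^{-1}$ for $r\ge2$) I would show this equals $\int_\Lambda u v\,d\xi$. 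This is the delicate step: the pairing is a priori an $H^{-1}$ inner product, and the point is precisely to reduce it to the $L^2$-type integral $\int_\Lambda u v\,d\xi$.

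Next I would compute the norm. By the identity just obtained and Hölder's inequality, $\|-\Delta u\|_{V_1^*}=\sup_{\|v\|_{L^r}\le 1}\big|\int_\Lambda u v\,d\xi\big|$, which by the standard $L^r$--$L^{\frac{r}{r-1}}$ duality equals $\|u\|_{L^{\frac{r}{r-1}}}$. Hence $-\Delta$ is an isometry into $V_1^*$ on the dense subspace $W_0^{1,2}(\Lambda)$, which is contained in $L^{\frac{r}{r-1}}(\Lambda)$ because $r\ge2$ forces $1<\frac{r}{r-1}\le2$, and which contains $C_c^\infty(\Lambda)$. Since $C_c^\infty(\Lambda)$ is dense in $L^{\frac{r}{r-1}}(\Lambda)$ for $1\le \frac{r}{r-1}<\infty$, the isometry extends uniquely and continuously to all of $L^{\frac{r}{r-1}}(\Lambda)$; as both sides of the pairing identity are continuous in $u$ for each fixed $v\in L^r(\Lambda)$, the identity passes to the limit and holds on the full space. (The sign of the Laplacian is immaterial for the isometry claim, so the statement for $\Delta$ follows at once.)

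The main obstacle is conceptual rather than computational: one must keep the two realizations of $(L^r(\Lambda))^*$ rigorously apart and pin down exactly what the pivot inner product on $H_1=H^{-1}$ is, so that the assertion $_{V_1^*}\langle-\Delta u,v\rangle_{V_1}=\int_\Lambda u v\,d\xi$ is genuinely justified rather than confused with an abstract $H^{-1}$ pairing. Once this identification is secured, the isometry via $L^p$ duality and the density extension are routine.
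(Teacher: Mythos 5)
Your proposal is correct and takes essentially the same route as the argument the paper relies on (the paper gives no proof of this lemma itself, citing \cite{LR1}, Lemma 4.1.13): there too one uses that $-\Delta$, with the Dirichlet norm on $W_0^{1,2}(\Lambda)$, is the Riesz isomorphism onto the pivot space $H_1=(W_0^{1,2}(\Lambda))^*$, reduces the pairing ${}_{V_1^*}\langle-\Delta u,v\rangle_{V_1}$ to $\int_\Lambda uv\,d\xi$ for $u$ in the dense class $W_0^{1,2}(\Lambda)$, reads off the isometry from $L^r$--$L^{\frac{r}{r-1}}$ duality, and extends by density. Your attention to keeping the two realizations of $(L^r(\Lambda))^*$ apart and to the inclusion $W_0^{1,2}(\Lambda)\subset L^{\frac{r}{r-1}}(\Lambda)$ (dense, since $r\ge 2$ and $\Lambda$ is bounded) matches the cited proof exactly.
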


We first formulate the assumptions on $\Psi$. Suppose the map
$$\Psi:V_1\times\mathscr{P}_2(H_1)\to L^{\frac{r}{r-1}}(\Lambda)$$
is measurable, and satisfies the following hypothesis.
\begin{hypothesis}\label{h3}
For all $u,v\in V_1$ and $\mu,\nu\in\mathscr{P}_2(H_1)$,
\begin{enumerate}
\item [$(\Psi1)$] The map
\begin{eqnarray*}
V_1\times\mathscr{P}_2(H_1)\ni(u,\mu)\mapsto\int_{\Lambda}\Psi(u,\mu)(\xi)v(\xi)d\xi
\end{eqnarray*}
is continuous.
\item [$(\Psi2)$] There are some constants $C,\theta>0$ such that
\begin{eqnarray*}
\int_{\Lambda}\Psi(u,\mu)(\xi)v(\xi)d\xi\geq -C\big(1+\|u\|_{H_1}^2+\mu(\|\cdot\|_{H_1}^2)\big)+\theta\|u\|_{V_1}^r.
\end{eqnarray*}
\item [$(\Psi3)$]
\begin{eqnarray*}
\int_{\Lambda}\big(\Psi(u,\mu)(\xi)-\Psi(v,\nu)(\xi)\big)\big(u(\xi)-v(\xi)\big)d\xi\geq 0.
\end{eqnarray*}
\item [$(\Psi4)$] There is a constant $C>0$,
\begin{eqnarray*}
\|\Psi(u,\mu)\|_{L^{\frac{r}{r-1}}}^{\frac{r}{r-1}}\leq C\big(1+\|u\|_{V_1}^{r}+\mu(\|\cdot\|_{H_1}^2)\big).
\end{eqnarray*}
\end{enumerate}
\end{hypothesis}
After the preparations above, we now define map $A_1: V_1\times\mathscr{P}_2(H_1)\to {V_1}^*$ by
$$A_1(u,\mu):=\Delta\Psi(u,\mu).$$
The Lemma \ref{l7} ensures that the map $A_1$ is well-defined and takes value in ${V_1}^*$.  Moreover, it is easy to check that the conditions $(\Psi1)$-$(\Psi4)$ imply
$({\mathbf{A}}{\mathbf{1}})$-$({\mathbf{A}}{\mathbf{4}})$. In order to prove the main result, we further assume that the measurable maps
$$f:H_1\times\mathscr{P}_2(H_1)\times H_2\to H_1,~B_1:V_1\times\mathscr{P}_2(H_1)\to L_2(U_1,H_1),$$
and $$g:H_1\times\mathscr{P}_2(H_1)\times V_2\to V_2^*,~B_2:H_1\times\mathscr{P}_2(H_1)\times V_2\to L_2(U_2,H_2)$$
are Lipschitz continuous. More precisely, there are some positive constants $L_g$, $L_{B_2}$ and $C$ such that for all $u_1,u_2\in H_1$,$v_1,v_2\in H_2$ and $\mu_1,\mu_2\in\mathscr{P}_2(H_1)$,
\begin{eqnarray}
\!\!\!\!\!\!\!\!&&\|f(u_1,\mu_1,v_1)-f(u_2,\mu_2,v_2)\|_{H_1}\leq C\big(\|u_1-u_2\|_{H_1}+\|v_1-v_2\|_{H_2}+\mathbb{W}_{2,H_1}(\mu_1,\mu_2)\big),\label{55}
\\
\!\!\!\!\!\!\!\!&&\|B_1(u_1,\mu_1)-B_1(u_2,\mu_2)\|_{L_2(U_1,H_1)}
\leq C(\|u_1-u_2\|_{H_1}+\mathbb{W}_{2,H_1}(\mu_1,\mu_2)),\label{56}
\\
\!\!\!\!\!\!\!\!&&\|g(u_1,\mu_1,v_1)-g(u_2,\mu_2,v_2)\|_{H_1}\leq L_g\|v_1-v_2\|_{H_2}
\nonumber\\\!\!\!\!\!\!\!\!&&~~~~~~~~~~~~~~~~~~~~~~~~~~~~~~~~~~~~~~~~~~~~
+C\big(\|u_1-u_2\|_{H_1}+\mathbb{W}_{2,H_1}(\mu_1,\mu_2)\big),~\label{57}
\\
\!\!\!\!\!\!\!\!&&\|B_2(u_1,\mu_1,v_1)-B_2(u_2,\mu_2,v_2)\|_{L_2(U_2,H_2)}\leq L_{B_2}\|v_1-v_2\|_{H_2}
\nonumber\\\!\!\!\!\!\!\!\!&&~~~~~~~~~~~~~~~~~~~~~~~~~~~~~~~~~~~~~~~~~~~~~~~~~~~~~~
+C\big(\|u_1-u_2\|_{H_1}+\mathbb{W}_{2,H_1}(\mu_1,\mu_2)\big).~\label{58}
\end{eqnarray}
Furthermore, we also assume that
the smallest eigenvalue $\lambda_1$ of map $-\Delta$ satisfies
\begin{equation}\label{59}
\lambda_1-L_g-L_{B_2}^2>0.
\end{equation}

Hence, according to Theorem \ref{Th1} and \ref{main result 1}, we have the following result for the slow-fast distribution dependent stochastic porous media equation.
\begin{theorem}\label{SPME}
Assume that (\ref{55})-(\ref{59}) hold and $\Psi$ fulfills the conditions $(\Psi1)$-$(\Psi4)$ above.
Then for any initial values $x\in H_1$, $y\in H_2$ and $T>0$, system~(\ref{ex1}) has a unique solution $(X^{\varepsilon}_t,Y^{\varepsilon}_t)_{t\in[0,T]}$ such that
\begin{align*}
\mathbb{E} \left(\sup_{t\in[0,T]}\|X_{t}^{\vare}-\bar{X}_{t}\|_{H_1}^{2} \right)\leq C_T(1+\|x\|_{H_1}^2+\|y\|_{H_2}^2)\varepsilon^{1/3}\rightarrow0,~~~\text{as}~\varepsilon\rightarrow0,\label{2.2}
\end{align*}
where $C_T$ is a constant only depending on $T$,  and $\bar{X}_t$ is the solution of the corresponding averaged equation.
\end{theorem}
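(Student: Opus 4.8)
The plan is to obtain Theorem \ref{SPME} as a direct application of the abstract results Theorem \ref{Th1} and Theorem \ref{main result 1}: since these already deliver both well-posedness and the $\varepsilon^{1/3}$ rate once Hypotheses \ref{h1} and \ref{h2} hold and the fast equation is strictly dissipative relative to $L_{B_2}$, the whole proof reduces to checking that the concrete drifts
\[
A_1(u,\mu):=\Delta\Psi(u,\mu),\qquad A_2(u,\mu,v):=\Delta v+g(u,\mu,v)
\]
fit the abstract framework on the Gelfand triples $L^r(\Lambda)\subset (W_0^{1,2}(\Lambda))^*\subset V_1^*$ (with $\alpha=r$) and $W_0^{1,2}(\Lambda)\subset L^2(\Lambda)\subset V_2^*$ (with $\beta=2$), and that $f,B_1,g,B_2$ meet the Lipschitz and growth demands via \eref{55}--\eref{58}.

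First I would verify Hypothesis \ref{h1} for the slow part. Demicontinuity $({\mathbf{A}}{\mathbf{1}})$ is $(\Psi1)$ verbatim. For $({\mathbf{A}}{\mathbf{2}})$, Lemma \ref{l7} gives
\[
{}_{V_1^*}\langle \Delta\Psi(u,\mu)-\Delta\Psi(v,\nu),u-v\rangle_{V_1}=-\int_\Lambda\big(\Psi(u,\mu)-\Psi(v,\nu)\big)(u-v)\,d\xi\le 0
\]
by the monotonicity $(\Psi3)$, which is more than enough, while the Lipschitz bounds on $f$ and $B_1$ are exactly \eref{55} and \eref{56}. Coercivity $({\mathbf{A}}{\mathbf{3}})$ follows from $(\Psi2)$ (again using the sign in Lemma \ref{l7}, producing the term $-\theta\|u\|_{V_1}^r$) together with the linear growth of $B_1$ read off from \eref{56}; growth $({\mathbf{A}}{\mathbf{4}})$ follows from $(\Psi4)$ since $\Delta$ is an isometry from $L^{r/(r-1)}(\Lambda)$ onto $V_1^*$, so $\|A_1(u,\mu)\|_{V_1^*}=\|\Psi(u,\mu)\|_{L^{r/(r-1)}}$.

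Next I would verify Hypothesis \ref{h2} for the fast part. Continuity $({\mathbf{H}}{\mathbf{1}})$ is clear as $\Delta$ is bounded linear and $g$ is Lipschitz. The key point is strict monotonicity $({\mathbf{H}}{\mathbf{2}})$: writing $w=v_1-v_2$, the Laplacian contributes ${}_{V_2^*}\langle\Delta w,w\rangle_{V_2}=-\|\nabla w\|_{L^2}^2\le-\lambda_1\|w\|_{H_2}^2$ by the Poincaré inequality with $\lambda_1$ the least eigenvalue of $-\Delta$, while the drift term, controlled by the Lipschitz bound \eref{57} and Young's inequality, is at most $(L_g+\gamma)\|w\|_{H_2}^2+C_\gamma\big(\|u_1-u_2\|_{H_1}^2+\mathbb{W}_{2,H_1}(\mu,\nu)^2\big)$ for any $\gamma>0$; hence $({\mathbf{H}}{\mathbf{2}})$ holds with a dissipativity constant $\kappa$ arbitrarily close to $\lambda_1-L_g$, the $B_2$-Lipschitz estimate being \eref{58}. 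Coercivity $({\mathbf{H}}{\mathbf{3}})$ (with $\beta=2$, so that $\eta\|v\|_{V_2}^2\sim\|\nabla v\|_{L^2}^2$) and growth $({\mathbf{H}}{\mathbf{4}})$ then follow from the boundedness of $\Delta:V_2\to V_2^*$ and the linear growth of $g$ and $B_2$.

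The only delicate point, and the sole use of assumption \eref{59}, is matching $\kappa$ against $L_{B_2}$: choosing $\gamma$ small in the previous step, the gap condition $\lambda_1-L_g-L_{B_2}^2>0$ forces $\kappa>L_{B_2}^2$, which supplies the strict dissipativity that drives the auxiliary-process estimate in Lemma \ref{MDYa} and thereby permits the invocation of Theorem \ref{main result 1}. The Poincaré constant $\lambda_1$ is thus the one quantitative ingredient that does real work. With Hypotheses \ref{h1}, \ref{h2} and this spectral gap in hand, Theorem \ref{Th1} yields the unique solution $(X_t^\varepsilon,Y_t^\varepsilon)$ and Theorem \ref{main result 1} yields the claimed bound, completing the proof.
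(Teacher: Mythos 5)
Your proposal is correct and takes essentially the same route as the paper: the paper's proof of Theorem \ref{SPME} likewise consists of checking that $A_1=\Delta\Psi$ satisfies $({\mathbf{A}}{\mathbf{1}})$--$({\mathbf{A}}{\mathbf{4}})$ via Lemma \ref{l7} and $(\Psi1)$--$(\Psi4)$, that $A_2=\Delta+g$ satisfies $({\mathbf{H}}{\mathbf{1}})$--$({\mathbf{H}}{\mathbf{4}})$ via the Poincar\'e inequality together with \eref{57}--\eref{58}, and then invoking Theorems \ref{Th1} and \ref{main result 1}; you merely spell out the verifications the paper dismisses as ``easy to check''. The one point worth flagging is that \eref{59} yields $\kappa$ arbitrarily close to $\lambda_1-L_g>L_{B_2}^2$, which suffices for the bound $2\kappa>L_{B_2}^2$ actually used in the proof of Lemma \ref{MDYa} but not for the literal hypothesis $\kappa>2L_{B_2}^2$ stated in Theorem \ref{main result 1} --- an inconsistency internal to the paper itself, which your argument inherits in exactly the same way the paper's does.
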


\begin{Rem}
(i) In \cite{HLL3,HL}, the authors have established the well-posedness and large deviation principle for Mckean-Vlasov stochastic porous media equations.
 To the best of our knowledge, there is no result on the averaging principle in the literature obtained for two-time-scale Mckean-Vlasov SPDE such as stochastic porous media equations here and  stochastic $p$-Laplace equations below.

 (ii) In \cite{LRSX1}, the authors have established the averaging principle result for classical (i.e. distribution independent) stochastic quasilinear SPDEs with slow and fast time-scales. In comparison to \cite{LRSX1}, we not only extend the corresponding averaging principle result to the distribution dependent case, but also explicitly obtain the strong convergence rate for the system in this work.
\end{Rem}

\subsection{Slow-fast  Mckean-Vlasov  stochastic $p$-Laplace equations}\label{laplace}
Now we apply our main results to establish the averaging principle for following  slow-fast Mckean-Vlasov stochastic $p$-Laplace equations
\begin{equation}\label{ex2}
\left\{ \begin{aligned}
&dX^{\varepsilon}_t=\left[div(|\nabla X^\varepsilon_t|^{p-2}\nabla X^\varepsilon_t)
+f(X^{\varepsilon}_t,\mathscr{L}_{X^{\varepsilon}_t},Y^{\varepsilon}_t)\right]dt
+B_1(X^{\varepsilon}_t,\mathscr{L}_{X^{\varepsilon}_t})dW^1_t,\\
&dY^{\varepsilon}_t=\frac{1}{\varepsilon}[\Delta Y^{\varepsilon}_t
+g(X^{\varepsilon}_t,\mathscr{L}_{X^{\varepsilon}_t},Y^{\varepsilon}_t)]dt+\frac{1}{\sqrt{\varepsilon}}B_2(X^{\varepsilon}_t,\mathscr{L}_{X^{\varepsilon}_t},Y^{\varepsilon}_t)d W^{2}_{t},
\\&X^{\varepsilon}_0=x, Y^{\varepsilon}_0=y.
\end{aligned} \right.
\end{equation}

For any $p\geq 2$, we set the following Gelfand triple for the slow equation
$$V_1:=W_0^{1,p}(\Lambda)\subset H_1:=  L^2(\Lambda) \subset  V_1^*,$$
and the following Gelfand triple for the fast equation
$$V_2:=W_0^{1,2}(\Lambda)\subset H_2:= L^{2}(\Lambda) \subset  V_2^*.$$

Denote $\bar{A}_1(u):=div(|\nabla u|^{p-2}\nabla u)$, which is called $p\text{-}Laplacian$ operator.
It is well-known that the operator $\bar{A}_1$ satisfies $({\mathbf{A}}{\mathbf{1}})$-$({\mathbf{A}}{\mathbf{4}})$, interested readers can refer to e.g.~\cite[Example 4.1.9]{LR1} for the detailed proof. Thus, according to Theorem \ref{Th1} and \ref{main result 1}, we have the following result for the slow-fast distribution dependent stochastic $p$-Laplace equations.
\begin{theorem}\label{laplace}
Assume that (\ref{55})-(\ref{59}) hold, then for any initial values $x\in H_1$, $y\in H_2$ and $T>0$, system~(\ref{ex2}) has a unique solution $(X^{\varepsilon}_t,Y^{\varepsilon}_t)_{t\in[0,T]}$ such that
\begin{align*}
\mathbb{E} \left(\sup_{t\in[0,T]}\|X_{t}^{\vare}-\bar{X}_{t}\|_{H_1}^{2} \right)\leq C_T(1+\|x\|_{H_1}^2+\|y\|_{H_2}^2)\varepsilon^{1/3}\rightarrow0,~~~\text{as}~\varepsilon\rightarrow0,\label{2.2}
\end{align*}
where $C_T$ is a constant only depending on $T$,  and $\bar{X}_t$ is the solution of the corresponding averaged equation.
\end{theorem}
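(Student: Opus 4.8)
The plan is to recognize that Theorem \ref{laplace} is a direct application of the two abstract results already established, namely Theorem \ref{Th1} (well-posedness) and Theorem \ref{main result 1} (strong averaging with rate $\vare^{1/3}$). Thus the whole task reduces to checking that the stochastic $p$-Laplace system \eref{ex2}, recast in the Gelfand triples $V_1=W_0^{1,p}(\La)\subset H_1=L^2(\La)\subset V_1^*$ and $V_2=W_0^{1,2}(\La)\subset H_2=L^2(\La)\subset V_2^*$, fits the abstract framework \eref{e1} with coefficients satisfying Hypotheses \ref{h1} and \ref{h2}, together with the dissipativity relation required in Theorem \ref{main result 1}. Concretely I would set $A_1(u,\mu):=\bar{A}_1(u)=\mathrm{div}(|\nabla u|^{p-2}\nabla u)$ (which is distribution-free) and $A_2(u,\mu,v):=\Delta v+g(u,\mu,v)$, with $f,B_1,B_2$ as prescribed, and then verify the hypotheses block by block.

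For the slow component, $\bar{A}_1$ is the classical $p$-Laplacian, which is known to satisfy demicontinuity $({\mathbf{A}}{\mathbf{1}})$, monotonicity, coercivity $({\mathbf{A}}{\mathbf{3}})$ and growth $({\mathbf{A}}{\mathbf{4}})$ on this triple (see \cite[Example 4.1.9]{LR1}). Since $\bar{A}_1$ is independent of the measure argument, the left-hand side of $({\mathbf{A}}{\mathbf{2}})$ equals $-\int_\La\big(|\nabla u|^{p-2}\nabla u-|\nabla v|^{p-2}\nabla v\big)\cdot\nabla(u-v)\,d\xi\le0$ by the monotonicity of $\xi\mapsto|\xi|^{p-2}\xi$, and is therefore trivially bounded by $c_1\big(\|u-v\|_{H_1}^2+\mathbb{W}_{2,H_1}(\mu,\nu)^2\big)$; the Lipschitz bounds on $f$ and $B_1$ are exactly \eref{55}-\eref{56}. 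This establishes Hypothesis \ref{h1}.

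The substantive step is Hypothesis \ref{h2} for the fast operator $A_2=\Delta+g$. I would split the duality pairing in $({\mathbf{H}}{\mathbf{2}})$ into its linear and nonlinear parts. For the linear part I exploit the spectral gap of $-\Delta$: by the Poincar\'e inequality ${}_{V_2^*}\langle\Delta(v_1-v_2),v_1-v_2\rangle_{V_2}=-\|\nabla(v_1-v_2)\|_{L^2}^2\le-\lambda_1\|v_1-v_2\|_{H_2}^2$; for the nonlinear part I apply the Lipschitz bound \eref{57} and absorb the resulting cross terms by Young's inequality, which produces the strict monotonicity $({\mathbf{H}}{\mathbf{2}})$ with a constant $\kappa$ as close as desired to $\lambda_1-L_g$. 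The coercivity $({\mathbf{H}}{\mathbf{3}})$ (with $\beta=2$) and the growth $({\mathbf{H}}{\mathbf{4}})$ then follow from the energy identity for $\Delta$ and the linear growth inherited from the Lipschitz hypotheses, while the diffusion bound is supplied by \eref{58}. The one genuinely delicate point is the dissipativity relation underlying Theorem \ref{main result 1}: the spectral-gap assumption \eref{59}, $\lambda_1-L_g-L_{B_2}^2>0$, is precisely what keeps the fast semigroup a strict contraction once the $B_2$-perturbation is accounted for (so that the effective rate $2\kappa-L_{B_2}^2$ stays positive), thereby guaranteeing the unique, exponentially mixing invariant measure $\nu^{x,\mu}$ of the frozen equation \eref{FEQ1} and the auxiliary estimates of Lemma \ref{MDYa}. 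Tracking these constants carefully so that the contraction survives the diffusion perturbation is where the main care is needed; once it is secured, Theorems \ref{Th1} and \ref{main result 1} apply and deliver the claimed $\vare^{1/3}$ rate.
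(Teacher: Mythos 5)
Your proposal is correct and follows essentially the same route as the paper: the paper proves Theorem \ref{laplace} precisely by citing \cite[Example 4.1.9]{LR1} for the fact that the $p$-Laplacian satisfies $({\mathbf{A}}{\mathbf{1}})$--$({\mathbf{A}}{\mathbf{4}})$ on the triple $W_0^{1,p}(\Lambda)\subset L^2(\Lambda)\subset V_1^*$ and then invoking Theorems \ref{Th1} and \ref{main result 1} under (\ref{55})--(\ref{59}). Your explicit verification of Hypothesis \ref{h2} for $A_2=\Delta+g$ via the Poincar\'e inequality, obtaining $\kappa$ arbitrarily close to $\lambda_1-L_g$ so that the effective dissipation $2\kappa-L_{B_2}^2$ remains positive under (\ref{59}), is exactly the computation the paper leaves implicit, and it matches the condition actually used in the proofs of Lemma \ref{MDYa} and Proposition \ref{Rem 4.1}.
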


\begin{Rem}
In particular, if we take $p=2$, $\bar{A}$ reduces to the classical Laplace operator. Therefore, our result above also covers some slow-fast distribution dependent semilinear SPDEs.
\end{Rem}

\subsection{Slow-fast Mckean-Vlasov SDEs}
Besides the above Mckean-Vlasov SPDEs, our main results are also applicable to  Mckean-Vlasov SDE models. For instance, we consider $V_i=H_i=\mathbb{R}^d$ ($i=1,2$) with the Euclidean norm $|\cdot|$ and inner product $\langle\cdot,\cdot\rangle$,
\begin{equation}\label{ex3}
\left\{ \begin{aligned}
&dX^{\varepsilon}_t=b_1(X^{\varepsilon}_t,\mathscr{L}_{X^{\varepsilon}_t},Y^{\varepsilon}_t)dt
+\sigma_1(X^{\varepsilon}_t,\mathscr{L}_{X^{\varepsilon}_t})dW^1_t,\\
&dY^{\varepsilon}_t=\frac{1}{\varepsilon}b_2(X^{\varepsilon}_t,\mathscr{L}_{X^{\varepsilon}_t},Y^{\varepsilon}_t)\dt+\frac{1}{\sqrt{\varepsilon}}\sigma_2(X^{\varepsilon}_t,\mathscr{L}_{X^{\varepsilon}_t},Y^{\varepsilon}_t)d W^{2}_{t},
\\&X^{\varepsilon}_0=x, Y^{\varepsilon}_0=y.
\end{aligned} \right.
\end{equation}
Suppose the coefficients
$$b_i: \mathbb{R}^d\times\mathscr{P}_2(\mathbb{R}^d)\times\mathbb{R}^d\rightarrow \mathbb{R}^d,~\sigma_1:\mathbb{R}^d\times\mathscr{P}_2(\mathbb{R}^d)\rightarrow \mathbb{R}^{d\times d}, ~\sigma_2:\mathbb{R}^d\times\mathscr{P}_2(\mathbb{R}^d)\times\mathbb{R}^d\rightarrow \mathbb{R}^{d\times d}$$
 are measurable and satisfy the following conditions (here $\mathbb{R}^{d\times d}$ denotes the set of real $d\times d$ matrices).
\begin{hypothesis}\label{h4}
For all $u,v,u_1,u_2,v_1,v_2,w\in \mathbb{R}^d$ and $\mu,\nu\in\mathscr{P}_2(\mathbb{R}^d)$.
\begin{enumerate}
\item [$({\mathbf{C}}{\mathbf{1}})$] There exists some constants $C,\kappa>0$ such that
    \begin{eqnarray*}|b_1(u_1,\mu,v_1)-b_1(u_2,\nu,v_2)|
\leq C\big(|u_1-u_2|+|v_1-v_2|+\mathbb{W}_{2,\mathbb{R}^d}(\mu,\nu)\big).
\end{eqnarray*}
Moreover,
\begin{eqnarray*}
\langle b_2(u_1,\mu,v_1)-b_2(u_2,\nu,v_2),v_1-v_2\rangle \leq -\kappa|v_1-v_2|^2+C\left(|u_1-u_2|^2+\mathbb{W}_{2,\mathbb{R}^d}(\mu,\nu)^2\right).
\end{eqnarray*}

\item [$({\mathbf{C}}{\mathbf{2}})$] There are some constants $L_{B_2},C>0$ such that
\begin{equation*}
\|\sigma_1(u,\mu)-\sigma_1(v,\nu)\|\leq C\big(|u-v|+\mathbb{W}_{2,\mathbb{R}^d}(\mu,\nu)\big),
\end{equation*}
and
\begin{equation*}
\|\sigma_2(u_1,\mu,v_1)-\sigma_2(u_2,\nu,v_2)\|\leq L_{\sigma_2}|u_1-u_2|+C\big(|v_1-v_2|+\mathbb{W}_{2,\mathbb{R}^d}(\mu,\nu)\big),
\end{equation*}
where $\|\cdot\|$ denotes the matrix norm.
\end{enumerate}
\end{hypothesis}

By Theorem \ref{Th1} and \ref{main result 1}, we can derive the averaging principle for the
slow-fast Mckean-Vlasov SDEs.
\begin{theorem}\label{SDE}
Assume that  Hypothesis \ref{h4} hold and $\kappa>2L_{\sigma_2}^2$,
then for any initial values $x,y\in\mathbb{R}^d$ and $T>0$, system~(\ref{ex3}) has a unique solution $(X^{\varepsilon}_t,Y^{\varepsilon}_t)_{t\in[0,T]}$ such that
\begin{align*}
\mathbb{E} \left(\sup_{t\in[0,T]}|X_{t}^{\vare}-\bar{X}_{t}|^{2} \right)\leq C_T(1+\|x\|_{H_1}^2+\|y\|_{H_2}^2)\varepsilon^{1/3}\rightarrow0,~~~\text{as}~\varepsilon\rightarrow0,
\end{align*}
where $C_T$ is a constant only depending on $T$,  and $\bar{X}_t$ is the solution of the corresponding averaged equation.
\end{theorem}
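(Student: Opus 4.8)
The plan is to deduce Theorem \ref{SDE} as a direct corollary of the two abstract results, Theorem \ref{Th1} and Theorem \ref{main result 1}, by exhibiting \eref{ex3} as a special case of the general system \eref{e1}. Concretely, I would take the degenerate Gelfand triples $V_i=H_i\cong H_i^*=V_i^*=\mathbb{R}^d$ for $i=1,2$, so that each duality pairing ${}_{V_i^*}\langle\cdot,\cdot\rangle_{V_i}$ reduces to the Euclidean inner product and $\|\cdot\|_{V_i}=\|\cdot\|_{H_i}=|\cdot|$. Matching \eref{ex3} against \eref{e1}, I would set $A_1\equiv0$, $f=b_1$, $B_1=\sigma_1$ for the slow part and $A_2=b_2$, $B_2=\sigma_2$ for the fast part, and fix the coercivity/growth exponents as $\alpha=\beta=2$. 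Once this dictionary is in place, the entire task is to check that Hypothesis \ref{h4} implies Hypothesis \ref{h1} and Hypothesis \ref{h2}, together with the threshold $\kappa>2L_{B_2}^2$ required in Theorem \ref{main result 1}; nothing beyond the abstract machinery is then needed.

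For Hypothesis \ref{h1}, most conditions are immediate because $A_1\equiv0$: demicontinuity $({\mathbf{A}}{\mathbf{1}})$ and the growth bound $({\mathbf{A}}{\mathbf{4}})$ hold trivially, and the monotonicity \eref{m} collapses to $0\le c_1(\cdots)$. The Lipschitz estimates on $f=b_1$ and $B_1=\sigma_1$ demanded in $({\mathbf{A}}{\mathbf{2}})$ are exactly the first inequalities of $({\mathbf{C}}{\mathbf{1}})$ and $({\mathbf{C}}{\mathbf{2}})$. The only point requiring a small manipulation is the coercivity $({\mathbf{A}}{\mathbf{3}})$, which here reads $\|\sigma_1(u,\mu)\|^2\le-\theta|u|^2+c_1(1+|u|^2+\mu(|\cdot|^2))$: since the Lipschitz property of $\sigma_1$ gives the linear growth $\|\sigma_1(u,\mu)\|^2\le C(1+|u|^2+\mu(|\cdot|^2))$ and $\alpha=2$ with $V_1=H_1$, one may introduce an arbitrarily small penalty $-\theta|u|^2$ and absorb the surplus into $c_1:=C+\theta$.

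For Hypothesis \ref{h2} I would argue analogously. Demicontinuity $({\mathbf{H}}{\mathbf{1}})$ follows from continuity of $b_2$; the monotonicity $({\mathbf{H}}{\mathbf{2}})$ is precisely the dissipativity inequality in $({\mathbf{C}}{\mathbf{1}})$ (with the same strict constant $\kappa$), while the $\sigma_2$-bound in $({\mathbf{C}}{\mathbf{2}})$ furnishes the Lipschitz estimate for $B_2=\sigma_2$, the relevant fast-variable constant being $L_{B_2}=L_{\sigma_2}$. The coercivity $({\mathbf{H}}{\mathbf{3}})$ and growth $({\mathbf{H}}{\mathbf{4}})$ I would obtain (with $\beta=2$) from the dissipativity of $b_2$ together with the linear growth of $b_2$ and $\sigma_2$ (the latter being Lipschitz, hence of linear growth): testing the dissipativity against $v$ at base point $v=0$ and applying Young's inequality yields the leading dissipative term $-\tfrac{\kappa}{2}|v|^2$, and the remaining contributions are controlled by $C(1+|v|^2+|u|^2+\mu(|\cdot|^2))$, which is the required form. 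With Hypothesis \ref{h2} verified, the standing assumption $\kappa>2L_{\sigma_2}^2$ is exactly the contraction condition $\kappa>2L_{B_2}^2$ entering Theorem \ref{main result 1}.

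Having established both hypotheses, the conclusion follows mechanically: Theorem \ref{Th1} gives the unique solution $(X^{\varepsilon}_t,Y^{\varepsilon}_t)$ of \eref{ex3}, Propositions \ref{Rem 4.0} and \ref{Rem 4.1} produce the invariant measure $\nu^{x,\mu}$ and the averaged drift $\bar b_1(x,\mu)=\int_{\mathbb{R}^d}b_1(x,\mu,z)\nu^{x,\mu}(dz)$, and Theorem \ref{main result 1} delivers the rate $\varepsilon^{1/3}$. I expect no deep obstacle here, since all the analytic difficulty is already absorbed into the abstract theorems; the only genuinely delicate points are bookkeeping ones. First, the wrong-sign terms $-\theta\|u\|_{V_1}^\alpha$ in $({\mathbf{A}}{\mathbf{3}})$ and $-\eta\|v\|_{V_2}^\beta$ in $({\mathbf{H}}{\mathbf{3}})$ must be manufactured in the degenerate case $V_i=H_i$, which is possible precisely because $\alpha=\beta=2$ and $b_2$ is dissipative. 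Second, one must be careful that it is the Lipschitz constant of $\sigma_2$ in the fast variable $v$, and not in the slow variable, that feeds into the threshold $\kappa>2L_{B_2}^2$; keeping track of this identification is what makes the hypothesis $\kappa>2L_{\sigma_2}^2$ the correct one.
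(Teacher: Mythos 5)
Your proposal is correct and takes essentially the same route as the paper: there Theorem \ref{SDE} is obtained precisely as a direct corollary of Theorems \ref{Th1} and \ref{main result 1} under the degenerate identification $V_i=H_i=\mathbb{R}^d$, $\alpha=\beta=2$, $A_1\equiv 0$, $f=b_1$, $B_1=\sigma_1$, $A_2=b_2$, $B_2=\sigma_2$, with the hypothesis-checking that you spell out left implicit. Your bookkeeping (manufacturing the coercivity terms $-\theta|u|^2$ and $-\eta|v|^2$ via the Lipschitz/linear-growth bounds and Young's inequality, and reading $L_{B_2}=L_{\sigma_2}$ as the Lipschitz constant of $\sigma_2$ in the \emph{fast} variable so that $\kappa>2L_{\sigma_2}^2$ is exactly the threshold $\kappa>2L_{B_2}^2$ of Theorem \ref{main result 1}) is exactly what the paper tacitly assumes; the only caveat, shared equally by the paper's one-line derivation, is that the continuity and linear growth of $b_2$ needed for $({\mathbf{H}}{\mathbf{1}})$ and $({\mathbf{H}}{\mathbf{4}})$ do not literally follow from the one-sided condition in $({\mathbf{C}}{\mathbf{1}})$ and must be understood as implicitly assumed.
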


\begin{Rem} Using the techniques of time discretization and Poisson equation, R\"{o}ckner et al.~\cite{RSX} established the optimal strong convergence rate $1/2$ of averaging principle for two-time-scale McKean-Vlasov SDEs under some fairly strong conditions, such as the regularity
of first-order and second-order partial derivatives of the coefficients. The convergence rate obtained here is not optimal, since we only assume
the coefficients satisfy some monotonicity and Lipschitz conditions, which is in general much weaker than the assumptions in~\cite{RSX}. Moreover, our main results are not only covering this type of models, but also applicable to various two-time-scale McKean-Vlasov (nonlinear) SPDEs.
\end{Rem}

\noindent\textbf{Acknowledgements} {The authors would like to thank the anonymous referee
for valuable suggestions and thank Prof.~Feng-Yu Wang for helpful discussions. The research
of W. Hong is supported by NSFC (No.~12171354). The research
of S. Li is supported by NSFC (No.~12001247),
 NSF of Jiangsu Province (No.~BK20201019),  NSF of Jiangsu Higher Education Institutions of China (No. 20KJB110015)
and the Foundation of Jiangsu Normal University (No.~19XSRX023). The research of W. Liu is supported by NSFC (No.~12171208, 11822106, 11831014, 12090011) and the PAPD of Jiangsu Higher Education Institutions.}


\begin{thebibliography}{2}
\bibitem{BSYY} J. Bao, Q. Song, G. Yin, C. Yuan, \emph{Ergodicity and strong limit results for two-time-scale functional stochastic differential equations},  {\it Stoch. Anal. Appl.} \textbf{35} (2017), 1030--1046.
\bibitem{BYY} J. Bao, G. Yin, C. Yuan, \emph{Two-time-scale stochastic partial differential equations driven by $\alpha$-stable noises: averaging principles}, {\it Bernoulli}  \textbf{23}  (2017), 645--669.
\bibitem{BDR2}  V. Barbu, G. Da Prato, M. R\"{o}ckner, \emph{Stochastic porous media equations},  Lecture Notes in Math. 2163, Springer, New York, 2016.
\bibitem{BR1} V. Barbu, M. R\"{o}ckner, \emph{From non-linear Fokker-Planck equations to solutions of distribution dependent SDE},  {\it Ann. Probab.} \textbf{48} (2020),  1902--1920.
\bibitem{BR2} V. Barbu, M. R\"{o}ckner, \emph{Probabilistic representation for solutions to non-linear Fokker-Planck equations}, {\it SIAM J. Math. Anal.} \textbf{50} (2018), 4246--4260.

\bibitem{BR} R. Bertram, J.E. Rubin,
\emph{Multi-timescale systems and fast-slow analysis}, {\it Math. Biosci.}
\textbf{287}  (2017), 105-121.
\bibitem{BS}Z. Bezemek, K. Spiliopoulos, \emph{Large deviations for interacting multiscale particle systems}, arXiv:2011.03032.
\bibitem{BM} N.N. Bogoliubov, Y.A. Mitropolsky, \emph{Asymptotic methods in the theory of non-linear oscillations}, {\it Gordon and Breach Science Publishers}, New York, 1961.

\bibitem{Br1} C.E. Br\'{e}hier, \emph{Strong and weak orders in averaging for SPDEs}, {\it Stochastic Process. Appl.} \textbf{122} (2012) 2553-2593.

\bibitem{Br2} C.E. Br\'{e}hier, \emph{Orders of convergence in the averaging principle for SPDEs: the case of a stochastically forced slow component}, {\it Stochastic Process. Appl.} \textbf{130} (2020), 3325--3368.

\bibitem{BLPR} R. Buckdahn, J. Li, S. Peng, C. Rainer,  \emph{Mean-field stochastic differential equations and associated PDEs}, {\it Ann. Probab.} \textbf{45} (2017), 824--878.
\bibitem{CF} S. Cerrai, M. Freidlin, \emph{Averaging principle for stochastic reaction-diffusion equations}, {\it Probab. Theory Related Fields} \textbf{144}(1-2) (2009),
 137--177.
\bibitem{CL} S. Cerrai, A. Lunardi, \emph{Averaging principle for nonautonomous slow-fast systems of stochastic reaction-diffusion equations: the almost periodic case}, {\it SIAM J. Math. Anal.} \textbf{49}(4) (2017), 2843--2884.

\bibitem{DGT} K. Dareiotis, B. Gess, P. Tsatsoulis, \emph{Ergodicity for stochastic porous media equations with multiplicative noise}, {\it SIAM J. Math. Anal.} \textbf{52} (2020), 4524--4564.


\bibitem {DSXZ} Z. Dong, X. Sun, H. Xiao, J. Zhai, \emph{Averaging principle for one dimensional stochastic Burgers equation}, {\it J. Differential Equations} \textbf{265} (2018), 4749-4797.
\bibitem{WE} W. E, B. Engquist, \emph{Multiscale modeling and computations}, {\it Notice of AMS}, 50
(2003) 1062-1070.
\bibitem{EO} C.M. Elliot, J.R. Ockendon, \emph{Weak and Variational Methods for Moving Boundary
Problems}, Pitman Research Notes in Mathematics 59, Boston. London. Melbourne, 1982.

\bibitem{FWL} H. Fu, L. Wang, J. Liu, \emph{Strong convergence in averaging principle for stochastic hyperbolic-parabolic equations with two time-scales}, {\it  Stochastic Process. Appl.} \textbf{125} (2015), 3255-3279.
\bibitem{FWLL} H. Fu, L. Wang, J. Liu, X. Liu, \emph{Weak order in averaging principle for stochastic wave equation with a fast oscillation}, {\it  Stochastic Process. Appl.} \textbf{128} (2018), 2557-2580.

\bibitem{G} D. Givon, I.G. Kevrekidis, R. Kupferman, \emph{Strong convergence of projective integeration schemes for singularly perturbed
stochastic differential systems}, {\it Comm. Math. Sci.} \textbf{4} (2006), 707-729.




    \bibitem{GP2} P. Gao, \emph{Averaging principle for multiscale stochastic fractional Schr\"{o}dinger equation}, {\it Ann. Henri Poincar\'{e}} \textbf{21} (2020), 1637--1675.
\bibitem{GP3} P. Gao, \emph{Averaging principle for complex Ginzburg-Landau equation perturbated by mixing random forces}, {\it SIAM J. Math. Anal.} \textbf{53} (2021), 32--61.

\bibitem{Gess1} B. Gess, \emph{Optimal regularity for the porous medium equation}, {\it  J. Eur. Math. Soc.} \textbf{23}(2) (2021), 425--465.



\bibitem{HL1} M. Hairer, X.-M. Li, \emph{Averaging dynamics driven by fractional Brownian motion}, {\it Ann. Probab.} \textbf{48}(4) (2020), 1826--1860.

\bibitem{HKW} E. Harvey, V. Kirk, M. Wechselberger, J. Sneyd, \emph{Multiple timescales, mixed mode oscillations and canards in models of intracellular calcium dynamics}, {\it J. Nonlinear Sci.} 21 (2011), 639-683.

\bibitem{H} R. Heinemann, \emph{Distribution-dependent stochastic differential delay equations in finite and infinite dimenstions}, {\it Infin. Dimens. Anal. Quantum Probab. Relat. Top.} 24 (2021), Paper No. 2050024.



\bibitem{HLL3} W. Hong, S. Li, W. Liu, \emph{Large Deviation Principle for McKean-Vlasov Quasilinear Stochastic Evolution Equations},  Appl. Math. Optim. 84 (2021), S1119-S1147.

\bibitem{HL} W. Hong, W. Liu, \emph{Distribution Dependent Stochastic Porous Media Type
Equations on General Measure Spaces}, arXiv:2103.10135.





\bibitem{HRW} X. Huang, P. Ren, F.-Y. Wang, \emph{Distribution Dependent Stochastic Differential Equations}, {\it Front. Math. China} \textbf{16} (2021), 257--301.

\bibitem{HRZ} Z. Hao, M. R\"{o}ckner, X. Zhang, \emph{Euler scheme for density dependent stochastic differential equations},  {\it J. Differential Equations} \textbf{274} (2021), 996--1014.



\bibitem{K1} R.Z. Khasminskii, \emph{On an averging principle for It\^{o} stochastic differential equations}, {\it Kibernetica} (4) (1968) 260-279.

\bibitem{KR} N.V. Krylov, B.L. Rozovskii, \emph{Stochastic evolution equations},
Translated from Itogi Naukii Tekhniki, Seriya Sovremennye Problemy
Matematiki. \textbf{14} (1979), 71--146, Plenum Publishing Corp. 1981.

\bibitem{LD} D. Liu, \emph{Strong convergence of principle of averaging for multiscale stochastic dynamical systems}, Commun. Math. Sci. \textbf{8} (2010) 999--1020.
\bibitem{L1} X.-M. Li, \emph{An averaging principle for a completely integrable stochastic Hamiltonian system}, {\it Nonlinearity}  \textbf{21} (2008), 803--822.
\bibitem{LR1} W. Liu, M. R\"{o}ckner, \emph{Stochastic Partial Differential Equations: An Introduction}, Universitext, Springer, 2015.
\bibitem{LRSX1} W. Liu, M. R\"{o}ckner, X. Sun, Y. Xie, \emph{Strong averaging principle for slow-fast stochastic
partial differential equations with locally monotone coefficients}, arXiv:1907.03260.


\bibitem{M} H.P. McKean, \emph{A class of Markov processes associated with nonlinear parabolic equations}, Proc. Nat. Acad.
Sci. U.S.A. \textbf{56} (1966), 1907--1911.
\bibitem{M1} H.P. McKean, \emph{Propagation of chaos for a class of nonlinear parabolic equations}, Lecture Series in
Differential Equations, \textbf{7} (1967), 41--57.
\bibitem{MCCTB} M. Mikikian, M. Cavarroc, L. Couedel, Y. Tessier, L. Boufendi, \emph{Mixed-mode oscillations in complex-plasma instabilities},
{\it Phys. Rev. Lett.} \textbf{100} (2008) 225005.
\bibitem{PXW} B. Pei, Y. Xu, J.-L. Wu, \emph{Two-time-scales hyperbolic-parabolic equations driven by Poisson random measures: existence, uniqueness and averaging principles}, J. Math. Anal. Appl. \textbf{447}(1) (2017), 243--268.


 \bibitem{RTW} P. Ren, H. Tang, F.-Y. Wang, \emph{Distribution-Path Dependent Nonlinear SPDEs with Application to Stochastic Transport Type Equations}, arXiv:2002.08652.

\bibitem{RSX}  M. R\"{o}ckner, X. Sun, Y. Xie, \emph{Strong convergence order for slow-fast McKean-Vlasov stochastic differential equations}, {\it Ann. Inst. Henri Poincare Probab. Stat.} \textbf{57} (2021), 4745--4777.

\bibitem{RSX1} M. R\"{o}ckner, X. Sun, L. Xie, \emph{Strong and weak convergence in the averaging principle for SDEs with
H\"{o}lder coefficients}, arXiv:1907.09256.

    \bibitem{RX2} M. R\"{o}ckner, L. Xie, \emph{Averaging principle and normal deviations for multiscale stochastic systems}, {\it Commun. Math. Phys.} \textbf{383} (2021), 1889--1937.

\bibitem{V07} J. L. V\'{a}zquez,
\emph{The porous medium equation},
Oxford Mathematical Monographs, Oxford University Press, Oxford, 2007.

\bibitem{WR12} W. Wang, A.J. Roberts, \emph{Average and deviation for slow-fast stochastic partial differential equations}, {\it J. Differential Equations} \textbf{253} (2012), 1265-1286.

\bibitem{WRD12} W. Wang, A.J. Roberts, J. Duan, \emph{Large deviations and approximations for slow-fast stochastic reaction-diffusion equations,} {\it J. Differential Equations} \textbf{253} (2012), 3501-3522.

\bibitem{W1} F.-Y. Wang, \emph{Distribution dependent SDEs for Landau type equations}, {\it Stochastic Process. Appl.} \textbf{128} (2018), 595--621.

\bibitem{XPW} Y. Xu, B. Pei, J.-L. Wu, \emph{Stochastic averaging principle for differential equations with non-Lipschitz coefficients driven by fractional Brownian motion}, Stoch. Dyn. \textbf{17}(2) (2017), 1750013, 16 pp.
\bibitem{Z} X. Zhang, \emph{Weak solutions of McKean-Vlasov SDEs with supercritical drifts}, arXiv:2010.15330.
\end{thebibliography}
\end{document}